\documentclass[12pt,draft]{article}
\usepackage{amsmath,amsfonts,amssymb,amsthm,amscd,comment}

\setlength{\topmargin}{0cm}
\setlength{\oddsidemargin}{1.0cm}
\setlength{\textheight}{22cm}
\setlength{\textwidth}{14cm}




\newcommand{\w}{\omega}
\newcommand{\1}{{\bf 1}}

\DeclareMathOperator{\Ker}{Ker\,}
\DeclareMathOperator{\Aut}{Aut\,}

\newcommand{\wt}[1]{|#1|}

\newcommand{\image}{{\rm Im\,}}

\newcommand\haru[2]{\left\langle\left.\,#1\,\right\vert \,#2\,\right\rangle_{\C}}

\newcommand\Z{\mathbb{Z}}
\newcommand\Zpos{\Z_{\geq0}}
\newcommand\Zplus{\Z_{>0}}

\newcommand\C{\mathbb{C}}

\newcommand{\NO}{\,{\raise0.25em\hbox{$\mathop{\hphantom {\cdot}}\limits^{_{\circ}}_{^{\circ}}$}}\,}


\newtheorem{theorem}{Theorem}[section]
\newtheorem{proposition}[theorem]{Proposition}
\newtheorem{lemma}[theorem]{Lemma}
\newtheorem{corollary}[theorem]{Corollary}

\theoremstyle{definition}

\theoremstyle{remark}
\newtheorem{remark}[theorem]{\bf Remark}

\numberwithin{equation}{section}

\begin{document}
\begin{center}
\begin{Large}
$C_2$-cofiniteness of $2$-cyclic permutation orbifold models
\end{Large}
\vskip1ex
Toshiyuki Abe\\
Department of Science and Engineering, Ehime University\\
Bunkyocho 2-5, Matsuyama, Ehime, Japan \end{center}
\vskip1cm
\begin{center}
{\bf Abstract }
\end{center}
\begin{small}
In this article, we consider permutation orbifold models of $C_2$-cofinite vertex operator algebras of CFT type.
We show the $C_2$-cofiniteness of the $2$-cyclic permutation orbifold model $(V\otimes V)^{ S_2}$ for an arbitrary $C_2$-cofinite simple vertex operator algebra $V$ of CFT type. 
We also give a proof of the $C_2$-cofiniteness of a $\Z_2$-orbifold model $V_L^+$ of the lattice vertex operator algebra $V_L$ associated with a rank one positive definite even lattice $L$ by using our result and the $C_2$-cofiniteness of $V_L$.  
\end{small}

\section{Introduction}
This paper is a continuation of the paper \cite{Abe10-1} where $2$-cyclic permutation orbifold models of the Virasoro vertex operator algebras are studied.  
It is also shown in \cite{Abe10-1} that if the based Virasoro vertex operator algebra is $C_2$-cofinite then its $2$-cyclic permutation orbifold model is also $C_2$-cofinite.
In this paper, we generalize the result to $2$-cyclic permutation orbifold models of arbitrary $C_2$-cofinite vertex operator algebras of CFT type.
As an example we give a proof of $C_2$-cofiniteness of a $\Z_2$-orbifold model $V_L^+$ of the lattice vertex operator algebra $V_L$ associated with a rank one positive definite even lattice $L$ by using the $C_2$-cofiniteness of $V_L$.

The tensor product $T^d(V)=V^{\otimes d}$ of $d$-copies of a vertex operator algebra $V$ has canonically a vertex operator algebra structure.
The symmetric group $S_d$ of degree $d$ acts on $T^d(V)$ in a natural way and it is regarded as a subgroup of the automorphism group $\Aut (T^d(V))$. 
For a subgroup $\Omega$ of $S_d$, an $\Omega$-permutation orbifold model $T^d(V)^{ \Omega}$ is the vertex operator subalgebra of $T^d(V)$ consisting of vectors stabilized by all elements in $\Omega$.
Theory of permutation orbifolds was first studied in \cite{KlemmSchmidt90} and \cite{FuchsKlemmSchmidt92} and has been systematized in \cite{BorisovHalpernSchweigert98} for cyclic permutations.
The results in \cite{BorisovHalpernSchweigert98} are generalized to those for every permutation group and play an essential role in the proof of the fact that the kernel of the representation of the modular group on the space of characters is a congruence subgroup (\cite{Bantay98}, \cite{Bantay03}). 

From a point of view of the theory of vertex operator algebras, Barron et al. (\cite{BarronDongMason02}) gave a structure of a $\sigma$-twisted $T^d(V)$-module on any weak $V$-module for a cyclic permutation $\sigma$ in $S_d$. 
They also show that there is a category equivalence between the category of weak $V$-modules and that of $\sigma$-twisted $T^d(V)$-modules. 
Moreover there exists a one to one correspondence between the set of inequivalent irreducible $V$-modules and those of irreducible $\sigma$-twisted $T^d(V)$-modules. 
But the classification of irreducible $T^d(V)^{ S_d}$-modules is not finished yet. 
As for this problem for the case $S_2$, it follows from the results in \cite{Miya11} that if $T^2(V)^{ S_2}$ is $C_2$-cofinite, then any irreducible $T^2(V)^{S_2}$-module is a submodule of irreducible $T^2(V)$-modules or irreducible $\sigma$-twisted $T^2(V)$-module for the generator $\sigma$ of $S_2$.  
Since irreducible $\sigma$-twisted modules are classified in \cite{BarronDongMason02}, our results will enable us the classification of irreducible $T^2(V)^{S_2}$-modules.      

Now we shall explain how to prove that $T^2(V)^{ S_2}$ is $C_2$-cofinite for a $C_2$-cofinite simple vertex operator algebra $V$ of CFT type. 
Let $V$ be a $C_2$-cofinite simple vertex operator algebra of CFT type.
We take a finite subset $S$ consisting of eigenvectors for $\w_{(1)}=L_0$ such that 
\[
V=\langle a^1_{(-n_1)}\cdots a^{r}_{(-n_r)}\1|a^i\in S,n_i\in\Zplus\rangle_{\C},
\]
where $\w$ is the Virasoro vector of $V$ and $a_{(n)}b$ $(a,b\in V)$ denotes the $n$-th product for $n\in\Z$. 
Let $\eta$ be a linear map from $V$ to $T^2(V)^{ S_2}$ defined by  
\begin{align*}
\eta(a)=a\otimes \1 +\1 \otimes a 
\end{align*}
for $a\in V$.
We denote by $\overline{\eta}=\pi\circ \eta$, where $\pi$ is the canonical projection form $T^2(V)^{S_2}$ to $T^2(V)^{S_2}/C_2(T^2(V)^{S_2})$.  
It is proved in \cite{Abe10-1} that if $V$ is $C_2$-cofinite, then the quotient space $T^2(V)^{S_2}/C_2(T^2(V)^{S_2})$ is finite dimensional if and only if the vector space 
\begin{align*}
\langle \overline{\eta}(x_{(-n)}y)|x,y\in S, n\in\Z\rangle_{\C}
\end{align*} 
is of finite dimension.

Now we set 
\begin{align*}
D(x,y)= \langle \overline{\eta}(x_{(-n)}y)| n\in\Z\rangle_{\C}  
\end{align*}
for any $x,y\in V$. 
Then it is clear that $T^2(V)^{S_2}/C_2(T^2(V)^{S_2})$ is finite dimensional if $\dim D(x,y)$ is finite for any $x,y\in S$. 
One of the main results in \cite{Abe10-1} is $\dim D(\w,\w)<\infty$ holds for the Virasoro vector $\w$ of $V$ without assuming the $C_2$-cofiniteness. 
In this paper we shall prove that $\dim D(x,y)$ is finite for any $x,y\in V$ when $V$ is a simple vertex operator algebra of CFT type. 
Now we can deduce that $T^2(V)^{S_2}$ is $C_2$-cofinite when $V$ is $C_2$-cofinite, simple and of CFT type. 

We apply our result to a $\Z_2$-orbifold model $V_{\Z\sqrt{2k}}^+$ of the lattice vertex operator algebra $V_{\Z\sqrt{2k}}$ for a positive definite even lattice $\Z\sqrt{2k}$ $(k\in\Zplus)$. 
The $C_2$-cofiniteness of $V_{\Z\sqrt{2k}}^+$ was proved in \cite{Yamskulna04}. 
We give an alternative proof of the $C_2$-cofiniteness of $V_{\Z\sqrt{2k}}^+$ by using the $C_2$-cofiniteness of $V_{\Z\sqrt{2k}}$ for $k\in\Zplus$. 

Let $M=\Z\beta$ be a lattice whose generator has a square length $\langle\beta,\beta\rangle=4k$ for $k\in\Zplus$.    
The vertex operator algebra $T^2(V_{M})$ is isomorphic to the lattice vertex operator algebra $V_{M\oplus M}$, where $M\oplus M$ is the orthogonal direct sum of two copies of $M$.
Let $\beta_1=(\beta,\beta)$, $\beta_2=(\beta,-\beta)\in M\oplus M$.
Then under the isomorphism $T^2(V_{M})\cong V_{M\oplus M}$, we have  
\begin{align}\label{ikeie}
T^2(V_{L})^{S_2}\cong V_{\Z\beta_1}\otimes V_{\Z\beta_2}^+\oplus V_{\frac{1}{2}\beta_1+\Z\beta_1}\otimes V_{\frac{1}{2}\beta_2+\Z\beta_2}^+.
\end{align}
Since both of the square lengths of $\frac{1}{2}\beta_1$ and $\frac{1}{2}\beta_2$ are $2k$, $V_{\Z\frac{1}{2}\beta_1}$ and $V_{\Z\frac{1}{2}\beta_2}$ are $C_2$-cofinite vertex operator algebras of CFT type. 
Therefore the right hand side in \eqref{ikeie} is a vertex oeprator subalgebra of the tensor product $V_{\Z\frac{1}{2}\beta_1}\otimes V_{\Z\frac{1}{2}\beta_2}^+$ with same Virasoro vector.
From our result and the $C_2$-cofiniteness of $V_M$, we see that $T^2(V_{L})^{S_2}$ is $C_2$-cofinite. 
Therefore $V_{\Z\frac{1}{2}\beta_1}\otimes V_{\Z\frac{1}{2}\beta_2}^+$ is $C_2$-cofinite. 
Moreover, since $V_{\Z\frac{1}{2}\beta_1}$ is $C_2$-cofinite, we can conclude that $V_{\Z\frac{1}{2}\beta_2}^+$, which is isomorphic to $V_{\Z\sqrt{2k}}^+$ as vertex operator algebras, is $C_2$-cofinite. 

This paper is organized as follows:
In Section \ref{Sect2} we prepare terminologies and notions.
We also state a lemma for later use. 
We recall the definition of permutation orbifold models and some identities given in \cite{Abe10-1} in Section \ref{Sect3}.
Section \ref{Sect4} is the main part of this paper.
In Section \ref{Sect4.1}, we recall some known facts proved in \cite{Abe10-1} and state the main theorem.
The main theorem follows from the finiteness of the dimension of $D(x,y)$ with arbitrary $x$ and $y$ which is proved in Sections \ref{Sect4.2}--\ref{Sect4.4}.
In Section \ref{Sect4.2}, we prove $\dim D(x,y) <\infty$ for homogeneous vectors $x,y$ whose weights are one, and prove this for $x=\w$ and every homogeneous vector $y$ in Section \ref{Sect4.3}.
By using the results in Sections \ref{Sect4.2} and \ref{Sect4.3}, we show $\dim D(x,y) <\infty$ for any homogeneous vectors $x,y$ in Section \ref{Sect4.4}.
Section \ref{Sect4.3} includes complicated numerical calculations to get nontrivial polynomials which we need. 
We use Mathematica to calculate them explicitly. 
The explicit form of the polynomials are given in Appendix, however we need only the fact that they are nonzero polynomials.   
In Section \ref{Sect5}, we give an alternative proof of the $C_2$-cofiniteness of the vertex operator algebra $V_L^+$ 

{\it Acknowledgements.} The author would like to thank Professor Hiromichi Yamada for reading the early version of this paper and giving some comments.
He would also like to thank Professor Masahiko Miyamoto for his useful suggestions for the results in Section \ref{Sect5}.   
This work is supported by Grant in Aid for young scientists (B) 23740022.

\section{Preliminaries}\label{Sect2}
Let $V=\bigoplus_{d=-t}^\infty{V}_d$ $(t\in\Zpos)$ be a vertex operator algebra with vacuum vector $\1\in V_0$ and Virasoro vector $\w\in V_2$.
By the definition there is a bilinear map $V\times V\ni (a,b)\mapsto a_{(n)}b\in V$ associated with every integer $n\in\Z$ which satisfies suitable axioms (see \cite{MatsuoNagatomo99} for example). 
The main axiom is an identity called the Borcherds identity.
It is known that the Borcherds identity is equivalent the following two identities, so called the associativity formula: 
\begin{align}
(a_{(m)}b)_{(n)}c=\sum_{i=0}^\infty\binom{m}{i}(-1)^i\left(a_{(m-i)}(b_{(n+i)}c)-(-1)^mb_{(m+n-i)}(a_{(i)}c)\right),
\end{align}
and the commutativity formula: 
\begin{align}
a_{(m)}(b_{(n)}c)-b_{(n)}(a_{(m)}c)=\sum_{i=0}^\infty\binom{m}{i}(a_{(i)}b)_{(m+n-i)}c,
\end{align}
where $a,b,c\in V$ and $m,n\in\Z$.
The Borcherds identity also implies the skew symmetry formula: 
\begin{align}\label{skew-sym}
a_{(m)}b=\sum_{i=0}^\infty\frac{(-1)^{m-1-i}}{i!}(b_{(m+i)}a)_{(-1-i)}\1
\end{align}
with $a,b\in V$ and $m\in\Z$. 

The following identity follows from the associativity formula.
\begin{lemma}\label{aiuewy} {\rm (\cite{Abe10-1})} 
For $a,b,u\in V$ and $m,n\in\Zplus$,
\begin{align*}
(a_{(-m)}b_{(-n)}\1)_{(-1)}u&=a_{(-m)}b_{(-n)}u\\
&\quad+\sum_{i=0}^\infty (\alpha_{m,n;i} a_{(-m-n-i)}b_{(i)}u+\alpha_{n,m;i}b_{(-m-n-i)}a_{(i)}u),
\end{align*}
where
\begin{align}\label{jashasu}
\alpha_{m,n;i}=\frac{(m+n-1)!}{(m-1)!(n-1)!}\binom{m+n-1+i}{i}\frac{(-1)^{n-1}}{n+i}
\end{align}
for $m,n\in\Zplus$ and $i\in\Zpos$.
\end{lemma}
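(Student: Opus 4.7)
The plan is to derive the stated identity by a single application of the associativity formula, followed by bookkeeping. Setting $c = b_{(-n)}\1$ and invoking associativity with first index $-m$ and second index $-1$ gives
$$(a_{(-m)}c)_{(-1)}u = \sum_{i=0}^{\infty}\binom{-m}{i}(-1)^i\Bigl[a_{(-m-i)}(c_{(-1+i)}u) - (-1)^{-m}c_{(-m-1-i)}(a_{(i)}u)\Bigr],$$
so the task reduces to computing the modes $(b_{(-n)}\1)_{(k)}$ for $k=-1+i$ and $k=-m-1-i$, and then matching coefficients.

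First I would establish the auxiliary identity
$$(b_{(-n)}\1)_{(k)} = \binom{n-k-2}{n-1}\,b_{(k-n+1)},$$
which follows from $b_{(-n)}\1=\frac{1}{(n-1)!}L_{-1}^{n-1}b$ together with the translation covariance $Y(L_{-1}v,z)=\partial_z Y(v,z)$, yielding $(L_{-1}^{n-1}b)_{(k)} = (-1)^{n-1}k(k-1)\cdots(k-n+2)\,b_{(k-n+1)}$. With this formula in hand, for $k=-1+i$ the binomial $\binom{n-1-i}{n-1}$ vanishes when $1\le i\le n-1$, equals $1$ when $i=0$, and for $i\ge n$ one reindexes $i=j+n$ with $j\ge 0$; this isolates the single term $a_{(-m)}b_{(-n)}u$ and converts the remainder into a series in $a_{(-m-n-j)}b_{(j)}u$. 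For $k=-m-1-i$ the binomial is never zero, and one obtains directly a series in $b_{(-m-n-i)}a_{(i)}u$.

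The last step is to rewrite the resulting coefficients in the form $\alpha_{m,n;j}$ and $\alpha_{n,m;i}$. Using $\binom{-m}{i}(-1)^i=\binom{m+i-1}{i}$ and the telescoping $(r-1)!/r!=1/r$ (applied with $r=n+j$ in the first series and $r=m+i$ in the second), each product of two binomials collapses to $\binom{m+n-1+\ell}{\ell}/(n+\ell)$, respectively $\binom{m+n-1+\ell}{\ell}/(m+\ell)$, multiplied by the common prefactor $(m+n-1)!/((m-1)!(n-1)!)$; the sign $(-1)^{n-1}$ emerges from the reindexing $i=j+n$ in the first sum, while $(-1)^{m-1}$ in the second sum comes from combining the factor $(-1)^{-m}$ outside the bracket with the $(-1)^i$ absorbed into $\binom{-m}{i}$. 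The main obstacle is really just this last combinatorial verification, and in particular being careful that the low-order terms $1\le i\le n-1$ vanish cleanly so that only the isolated $a_{(-m)}b_{(-n)}u$ survives outside the series. No deeper input than associativity and translation covariance is required.
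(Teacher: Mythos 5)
Your proposal is correct: the associativity formula with indices $(-m,-1)$, the identity $(b_{(-n)}\1)_{(k)}=\binom{n-k-2}{n-1}b_{(k-n+1)}$ from $b_{(-n)}\1=\tfrac{1}{(n-1)!}L_{-1}^{n-1}b$, and the binomial simplifications all check out, reproducing exactly the coefficients $\alpha_{m,n;j}$ and $\alpha_{n,m;i}$. This is essentially the route the paper indicates (it states only that the lemma ``follows from the associativity formula,'' citing the earlier paper for details), so no further comparison is needed.
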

In particular, if $a=b$, then we have 
\begin{align}
\begin{split}\label{ppsop}
(a_{(-m)}a_{(-n)}\1)_{(-1)}u&=a_{(-m)}a_{(-n)}u+\sum_{i=0}^\infty c_{m,n;i} a_{(-m-n-i)}a_{(i)}u\end{split}
\end{align}
with constants
\begin{align}
\begin{split}\label{const004}
c_{m,n;i}:&=\alpha_{m,n;i}+\alpha_{n,m;i}\\
&=\frac{(m+n-1)!}{(m-1)!(n-1)!}\binom{m+n-1+i}{i}\left(\frac{(-1)^{n-1}}{n+i}+\frac{(-1)^{m-1}}{m+i}\right). 
\end{split}
\end{align}

A vertex operator algebra is said to be of {\em CFT type} if $V_{d}=0$ if $d<0$ and $V_0=\C\1$.
We always assume that vertex operator algebras are of CFT type in this paper. 
Let $S$ be a subset of $V$ and consider the subspace 
\[
\langle S\rangle^{str}_{V}:=\haru{a^{1}_{(-n_1)}\cdots a^{r}_{(-n_r)}\1}{r\in\Zplus,a^i\in S,n_i\in\Zplus}.
\]
If $V=\langle S\rangle^{str}_{V}$ then it is called that $V$ is {\em strongly generated} by $S$.  
If we take a finite subset $S$ so that $V$ is strongly generated by $S$, then $V$ is said to be finitely strongly generated. 

We consider the subspace $C_2(V)=\haru{a_{(-2)}b}{a,b\in V}$ and set $R(V)=V/C_2(V)$. 
Then we say $V$ to be {\em $C_2$-cofinite} if the vector space $R(V)$ is finite dimensional. 
We write $\overline{a}=a+C_2(V)\in R(V)$ for $a\in V$.
It is well known that $R(V)$ has a Poisson algebra structure (see \cite{Zhu96}).
Its multiplication and Lie commutation relation are defined by 
\begin{align*}
\overline{a}\cdot\overline{b}=\overline{a_{(-1)}b}\quad\text{and}\quad[\overline{a},\overline{b}]=\overline{a_{(0)}b}
\end{align*} 
for $a,b\in V$, respectively. 
The vector $\overline{\1}$ is the unit of $R(V)$. 
Since $L_{-1}a=a_{(-2)}\1$ for $a\in V$, we have
\begin{align}\label{asiduhfsq}
L_{-1}V\subset C_2(V). 
\end{align}

It is easy to see that if $V$ is strongly generated by a subset $S$ then $R(V)$ is generated by $\{\overline{a}|a\in S\}$. 
If $R(V)$ is finite dimensional then there is a finite dimensional subspace $U$ of $V$ such that $V=U\oplus C_2(V)$. 
It is known that $V$ is strongly generated by a basis of $U$ (see \cite{GaberdielNeitzke03}). 
Hence $V$ is finitely, strongly generated.   

Finally we recall the definition of weight. 
For a vector $a\in V_d$, $d$ is called the {\em weight} of $a$ and denoted by $\wt{a}$. 
We also mention $a$ to be homogeneous if $a \in V_d$ for some $d\geq0$.  
By the definition weights are eigenvalues for $L_0=\w_{(1)}$, that is, $V_d$ is the eigenspace  for $L_0$ of eigenvalue $d$.
For homogeneous vectors $a,b\in V$ and $n\in\Z$, we see that $\wt{a_{(n)}b}=\wt{a}+\wt{b}-n-1$. 
This implies that $C_2(V)$ is a graded subspace of $V$ and so is $\langle S\rangle_{V}^{str}$ if $S$ consists of homogeneous vectors.     

\section{Permutation orbifold models}\label{Sect3}
Let $d$ be a positive integer and consider the tensor product $T^d(V)$ of $d$ copies of $V$. 
Then $T^d(V)$ has naturally a vertex operator algebra structure (see \cite{FHL}). 
The symmetric group $S_d$ of degree $d$ acts on $T^d(V)$ by 
\begin{align*}
\sigma(a^1\otimes a^2\otimes\cdots\otimes a^d)=a^{\sigma^{-1}(1)}\otimes a^{\sigma^{-1}(2)}\otimes\cdots\otimes a^{\sigma^{-1}(d)}
\end{align*} 
for $\sigma\in S_d$ and $a^i\in V$. 
The fixed point set of $T^d(V)$ by a subgroup $\Omega\subset S_d$ has naturally a vertex operator algebra structure.
The fixed point vertex operator algebra is called the $\Omega$-{\it permutation orbifold model} and denoted by $T^d(V)^{ \Omega}$. 
The vacuum vector is $\1^{\otimes d}$ and the Virasoro vector is $\w\otimes\1^{\otimes (d-1)}+\cdots+\1^{\otimes (d-1)}\otimes \w$.  

We consider a $k$-linear map $\phi_k^{(d)}:V\times\cdots\times V\rightarrow T^d(V)^{ S_d}$ defined by 
\[
\phi_{k}^{(d)}(a^1,\cdots,a^k)=\frac{1}{(d-k)!}\sum_{\sigma\in S_d}\sigma(a^1\otimes a^2\otimes\cdots\otimes a^k\otimes\1^{\otimes n-k})
\]
for $a^i\in V$ and $1\leq k\leq d$. 
We also set 
\begin{align}\label{ualooi}
\eta^{(d)}(a)=\phi_1^{(d)}(a)
\end{align} 
for $a\in V$. 
It is easy to see that 
\[
\phi_{k}^{(d)}(a_1,\cdots,a_{k-1},\1)=\frac{1}{d-k}\phi_{k-1}^{(d)}(a_1,\cdots,a_{k-1})
\] 
for $a^i\in V$ and that $\eta^{(d)}(a)=a\otimes\1^{\otimes(d-1)}+\cdots+\1^{\otimes(d-1)}\otimes a$ for $a\in V$. 
For the sake of convenience, we set $\phi_{d+1}^{(d)}=0$.
Here and further we will write $\phi_k$ and $\eta$ for $\phi^{(d)}_k$ and $\eta^{(d)}$, respectively if the degree $d$ is clear from the context. 

\begin{lemma}\label{asdiusy}
For $1\leq k\leq d$ and $a_1,\cdots,a_k\in V$, 
\begin{align}
\begin{split}\label{iaudfh}
&\eta^{(d)}(a)_{(-1)}\phi_{k}^{(d)}(a^1,\cdots,a^k)\\
&=\sum_{i=1}^{k}\phi_{k}^{(d)}(a^1,\cdots,a_{(-1)}a^i,\cdots,a^k)+\phi_{k+1}^{(d)}(a^1,\cdots,a^k,a). 
\end{split}
\end{align} 
\end{lemma}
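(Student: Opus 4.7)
The plan is to compute $\eta^{(d)}(a)_{(-1)}\phi^{(d)}_k(a^1,\ldots,a^k)$ directly from the tensor product vertex operator algebra structure. Since $Y(\mathbf{1}^{\otimes(j-1)}\otimes a\otimes\mathbf{1}^{\otimes(d-j)},z)=\mathrm{id}^{\otimes(j-1)}\otimes Y(a,z)\otimes\mathrm{id}^{\otimes(d-j)}$, the elementary identity I need is
$$
(\mathbf{1}^{\otimes(j-1)}\otimes a\otimes\mathbf{1}^{\otimes(d-j)})_{(-1)}(v^1\otimes\cdots\otimes v^d)=v^1\otimes\cdots\otimes(a_{(-1)}v^j)\otimes\cdots\otimes v^d
$$
for any simple tensor. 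Summing this over $j=1,\ldots,d$ gives the action of $\eta^{(d)}(a)_{(-1)}$ on any simple tensor.

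Next, I apply this to each term $\sigma(a^1\otimes\cdots\otimes a^k\otimes\mathbf{1}^{\otimes(d-k)})$ appearing in the definition of $\phi^{(d)}_k(a^1,\ldots,a^k)$, and split the $j$-sum according to whether $\sigma^{-1}(j)\leq k$ or $\sigma^{-1}(j)>k$. In the first case, the entry in slot $j$ is $a^{\sigma^{-1}(j)}$, so acting by $a_{(-1)}$ in that slot replaces it by $a_{(-1)}a^{\sigma^{-1}(j)}$; reindexing $i=\sigma^{-1}(j)$ and then summing over $\sigma$ exactly assembles the terms $\sum_{i=1}^{k}\phi^{(d)}_k(a^1,\ldots,a_{(-1)}a^i,\ldots,a^k)$. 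In the second case, the entry is $\mathbf{1}$, and $a_{(-1)}\mathbf{1}=a$ puts $a$ in a slot that was previously vacuum.

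To reconcile this second contribution with $\phi^{(d)}_{k+1}(a^1,\ldots,a^k,a)$, I use the $S_{d-k}$-symmetry of the last $d-k$ vacuum slots in the definition of $\phi^{(d)}_k$: for each fixed $\sigma$, the $d-k$ tensors obtained by replacing $\mathbf{1}$ with $a$ in each of these slots are all of the form $\tau(a^1\otimes\cdots\otimes a^k\otimes a\otimes\mathbf{1}^{\otimes(d-k-1)})$ for various $\tau$. A change of variables then shows that summing over both $\sigma\in S_d$ and the position of the replaced $\mathbf{1}$ is equivalent to $(d-k)$ times the full $\tau$-sum defining $\phi^{(d)}_{k+1}(a^1,\ldots,a^k,a)$. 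The factor $d-k$ converts the prefactor $1/(d-k)!$ coming from $\phi_k$ into $1/(d-k-1)!$, exactly matching the normalization of $\phi_{k+1}$.

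The argument is purely combinatorial; the only subtlety is keeping the factorials and the relabellings of $\sigma$ straight when extracting the $\phi_{k+1}$-term, and checking that the edge case $k=d$ (where $\phi_{d+1}^{(d)}=0$ by convention) is consistent, which happens automatically since the second sum is empty when $\sigma^{-1}(j)>k$ has no solutions. There is no genuine analytic or algebraic obstacle here beyond careful bookkeeping.
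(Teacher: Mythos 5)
Your proposal is correct and follows essentially the same route as the paper: both rest on the slot-by-slot action of $\eta^{(d)}(a)_{(-1)}$ on simple tensors together with $a_{(-1)}\1=a$, the paper merely packaging the bookkeeping by first proving the case $k=d$ and then specializing $a^{k+1}=\cdots=a^{d}=\1$, which amounts to the same $(d-k)$-fold counting of the vacuum slots that you carry out explicitly. Your treatment of the normalization $1/(d-k)!$ versus $1/(d-k-1)!$ and of the edge case $k=d$ is accurate.
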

\begin{proof}
By the definition we have 
\begin{align*}
\eta(a)_{(-1)}\sigma(a^{1}\otimes\cdots\otimes a^{d})
&=\eta(a)_{(-1)}(a^{\sigma^{-1}(1)}\otimes\cdots\otimes a^{\sigma^{-1}(d)})\\
&=\sum_{i=1}^{d}a^{\sigma^{-1}(1)}\otimes\cdots\otimes a_{(-1)}a^{\sigma^{-1}(i)}\otimes\cdots\otimes u^{\sigma^{-1}(d)}\\
&=\sum_{i=1}^{d}\sigma(a^{1}\otimes\cdots\otimes a_{(-1)}a^{i}\otimes\cdots\otimes a^{d}) 
\end{align*}
for $a,a^i\in V$ and $\sigma\in S_d$. 
Hence 
\[
\eta(a)_{(-1)}\phi_{d}(a^1,\cdots,a^d)=\sum_{i=1}^{d}\phi_{d}(a^1,\cdots,a_{(-1)}a^i,\cdots,a^d).
\]
By taking $a^{k+1}=\cdots=a^{d}=\1$, we have \eqref{iaudfh}.
\end{proof}
In particular, we see that
\begin{align}\label{hawhyer}
\phi_{2}(a,b)=\eta(a)_{(-1)}\eta (b)-\eta (a_{(-1)}b)
\end{align}
and 
\begin{align*}
\phi_3(a,b,c)&=\eta(a)_{(-1)}\phi_2(b,c)-\phi_2(a_{(-1)}b,c)-\phi_2(b,a_{(-1)}c)\\
&=\eta(a)_{(-1)}\eta(b)_{(-1)}\eta(c)\\
&\quad-\eta(a)_{(-1)}\eta(b_{(-1)}c)-\eta(b)_{(-1)}\eta(a_{(-1)}c)-\eta(c)_{(-1)}\eta(a_{(-1)}b)\\
&\quad +\eta(c_{(-1)}a_{(-1)}b)+\eta(b_{(-1)}a_{(-1)}c)
\end{align*}
for $a,b,c\in V$. 
These identities show that $\phi_{2}(a,b)$ and $\phi_{3}(a,b,c)$ with $a,b,c\in V$ are in the subspace $\langle \image \eta\rangle^{str}_{T^d(V)^{ S_d}}$. 
In fact we have the following proposition. 
\begin{proposition}\label{asiud}
$T^d(V)^{ S_d}$ is strongly generated by $\image\eta$.  
\end{proposition}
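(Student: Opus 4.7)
The plan is to show by induction on $k$ that for every $1\leq k\leq d$ and every $a^1,\ldots,a^k\in V$, the vector $\phi_k^{(d)}(a^1,\ldots,a^k)$ lies in $\langle \image\eta\rangle^{str}_{T^d(V)^{S_d}}$. The proposition then follows once one checks that these vectors, together with the vacuum $\1^{\otimes d}$, span $T^d(V)^{S_d}$ as a $\C$-vector space.

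First I would verify the spanning claim. Fix a basis $\{v_\alpha\}$ of $V$ containing $\1$; the tensor monomials $v_{\alpha_1}\otimes\cdots\otimes v_{\alpha_d}$ form a basis of $T^d(V)$, and their $S_d$-orbit sums form a basis of $T^d(V)^{S_d}$. An orbit sum in which exactly $k$ of the $v_{\alpha_i}$ are non-vacuum equals, up to a nonzero combinatorial factor, $\phi_k^{(d)}(v_{\alpha_{i_1}},\ldots,v_{\alpha_{i_k}})$, where $i_1,\ldots,i_k$ are the positions of the non-vacuum entries. The orbit sum for $k=0$ is $\1^{\otimes d}$, and since $\eta(\1)=d\,\1^{\otimes d}$ we have $\1^{\otimes d}=\tfrac{1}{d}\eta(\1)_{(-1)}\1^{\otimes d}\in\langle \image\eta\rangle^{str}$.

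For the induction, the base case $k=1$ is immediate from $\phi_1^{(d)}(a)=\eta(a)=\eta(a)_{(-1)}\1^{\otimes d}$. For the inductive step, rearranging the identity \eqref{iaudfh} of Lemma \ref{asdiusy} yields
\[
\phi_{k+1}^{(d)}(a^1,\ldots,a^k,a)=\eta(a)_{(-1)}\phi_k^{(d)}(a^1,\ldots,a^k)-\sum_{i=1}^{k}\phi_k^{(d)}(a^1,\ldots,a_{(-1)}a^i,\ldots,a^k).
\]
By the inductive hypothesis each $\phi_k^{(d)}$ term on the right is a $\C$-linear combination of monomials of the form $c^1_{(-n_1)}\cdots c^r_{(-n_r)}\1^{\otimes d}$ with $c^j\in\image\eta$ and $n_j\in\Zplus$. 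Prepending $\eta(a)_{(-1)}$ to such a monomial produces another monomial of exactly the same shape (now with $r+1$ factors and first exponent $1\in\Zplus$), and linear combinations of these stay in $\langle \image\eta\rangle^{str}$. Hence $\phi_{k+1}^{(d)}(a^1,\ldots,a^k,a)\in\langle \image\eta\rangle^{str}$, which completes the induction and therefore the proof.

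I do not foresee a genuine obstacle: the only vertex-algebraic input is the recursion \eqref{iaudfh}, which is already established in Lemma \ref{asdiusy}, and the remaining steps are the spanning assertion for symmetric tensors and the straightforward induction built from that recursion.
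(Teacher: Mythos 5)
Your proof is correct and follows essentially the same route as the paper: both note that the vectors $\phi_k^{(d)}(a^1,\ldots,a^k)$ span $T^d(V)^{S_d}$ and then induct on $k$ using the recursion from Lemma \ref{asdiusy}. The extra care you take with the spanning claim and the vacuum is fine but not a substantive difference.
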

\begin{proof}
Note that $T^d(V)^{ S_d}$ is linearly spanned by $\phi_{k}(a^1,\cdots,a^k)$ with $1\leq k \leq d$ and $a^i\in V$. 
By Lemma \ref{asdiusy}, one has 
\begin{align*}
\phi_{k+1}(a^1,\cdots,a^k,a)=&\eta(a)_{(-1)}\phi_{k}(a^1,\cdots,a^k)-\sum_{i=1}^{k}\phi_{k}(a^1,\cdots,a_{(-1)}a^i,\cdots,a^k).
\end{align*}
Thus induction on $k$ proves that $\phi_{k}(a^1,\cdots,a^k)$ is in $\langle\image\eta\rangle^{str}_{T^d(V)^{ S_d}}$. 
Therefore $T^d(V)^{ S_d}$ is strongly generated by $\image \eta$.  
\end{proof}
We see that 
\begin{align}\label{asoiru}
\eta(a)_{(i)}\eta(b)=\eta(a_{(i)}b)
\end{align}
for $a,b\in V$ and $i\in\Zpos$. 
Therefore $\image \eta$ is closed under the $i$-th product for any nonnegative integer $i$.
This fact also proves that $L_{0}\eta(a)=\eta(L_{0}a)$ for $a\in V$.
Thus for any homogeneous $a\in V$, $\eta(a)$ is also a homogeneous vector of weight $\wt{a}$.
Hence $\image \eta$ is a graded subspace of $T^d(V)^{ S_d}$. 
  
Now we define $\overline{\eta}$ and $\overline{\phi_k}$ by 
\begin{align*}
\overline{\eta}(a)&=\eta(a)+C_2(T^d(V)^{ S_d}),\\
\overline{\phi_k}(a^1,\cdots,a^{k})&=\phi_k(a^1,\cdots,a^{k})+C_2(T^d(V)^{S_d})
\end{align*}
for $k\geq 1$ and $a,a^1,\cdots,a^k\in V$. 
By \eqref{hawhyer} and \eqref{asoiru}, we have 
\begin{align}
\overline{\eta}(a)\cdot\overline{\eta}(b)&=\overline{\eta}(a_{(-1)}b)+\overline{\phi_{2}}(a,b),\\
[\overline{\eta}(a),\overline{\eta}(b)]&=\overline{\eta}(a_{(0)}b)
\end{align}
for any $a,b\in V$ in the Poisson algebra $R(T^d(V)^{S_d})$. 

It follows from Proposition \ref{asiud} that $R(T^d(V)^{ S_d})$ is generated by $\image\overline{\eta}$ as an algebra.
In particular, if $\image \overline{\eta}$ is finite dimensional, then $R(T^d(V)^{  S_d})$ is finitely generated as an algebra.
More strongly, we have the following theorem.
\begin{theorem}\label{pqisiw}
Let $V$ be a vertex operator algebra. 
Then $T^d(V)^{  S_d}$ is $C_2$-cofinite if and only if $\image \overline{\eta}$ is finite dimensional.
\end{theorem}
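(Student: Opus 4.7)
The ``only if'' direction is immediate: $\image\overline{\eta}$ is a subspace of $R(T^d(V)^{S_d})$, so $C_2$-cofiniteness of $T^d(V)^{S_d}$ forces $\dim\image\overline{\eta}<\infty$.

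For the converse, assume $\image\overline{\eta}$ is finite dimensional. The plan is to show that $R:=R(T^d(V)^{S_d})$ is spanned by monomials of degree at most $d$ in $\image\overline{\eta}$; the finite dimensionality of $\image\overline{\eta}$ then yields $\dim R<\infty$. The first step is to combine Proposition \ref{asiud} with the general identity $\overline{a_{(-n)}b}=0$ for $n\geq 2$ (a consequence of \eqref{asiduhfsq} together with the $L_{-1}$-derivative property $a_{(-n)}b=\tfrac{1}{(n-1)!}(L_{-1}^{n-1}a)_{(-1)}b$) to conclude that $R$ is generated as a commutative associative algebra by $\image\overline{\eta}$; equivalently, every element of $R$ is a linear combination of monomials $\overline{\eta}(a^1)\overline{\eta}(a^2)\cdots\overline{\eta}(a^k)$ with $a^i\in V$.

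Next, I would prove by induction on $k$, using Lemma \ref{asdiusy}, the expansion in $R$
\[
\overline{\phi_k}(a^1,\ldots,a^k)=\overline{\eta}(a^1)\overline{\eta}(a^2)\cdots\overline{\eta}(a^k)+P_k(a^1,\ldots,a^k),
\]
where $P_k$ is a linear combination of monomials of degree at most $k-1$ in $\image\overline{\eta}$, whose arguments are iterated $(-1)$-products of the original $a^i$. The induction step is immediate from the recursion $\overline{\phi_{k+1}}=\overline{\eta}(a^{k+1})\cdot\overline{\phi_k}-\sum_{i=1}^k\overline{\phi_k}(\ldots,a^{k+1}_{(-1)}a^i,\ldots)$ obtained by passing Lemma \ref{asdiusy} to $R$. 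The crucial step is then to exploit the convention $\phi_{d+1}^{(d)}=0$. Setting $k=d+1$ in the expansion and using $\overline{\phi_{d+1}}=0$ gives
\[
\overline{\eta}(a^1)\cdots\overline{\eta}(a^{d+1})=-P_{d+1}(a^1,\ldots,a^{d+1}),
\]
expressing any monomial of degree $d+1$ in $\image\overline{\eta}$ as a linear combination of monomials of degree at most $d$ in $\image\overline{\eta}$. Iterating this reduction, every monomial in $\image\overline{\eta}$ of degree larger than $d$ reduces to monomials of degree at most $d$, completing the proof.

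The main point to handle carefully is that the auxiliary polynomials $P_k$ involve $\overline{\eta}$ applied to iterated $(-1)$-products of the $a^i$'s, so the reduction cannot be carried out inside a preferred finite generating subset of $V$; what is essential is that the \emph{entire} image $\image\overline{\eta}$ (the image of all of $V$) is finite dimensional, so that the new $\overline{\eta}$-images produced at each step of the reduction remain in a fixed finite dimensional ambient space and the bound on the number of independent bounded-degree monomials is genuinely finite.
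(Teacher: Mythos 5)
Your argument is correct, but it runs along a genuinely different track from the paper's. Both proofs are powered by the same recursion from Lemma \ref{asdiusy}, namely $\overline{\phi_{k+1}}(a^1,\ldots,a^{k+1})=\overline{\eta}(a^{k+1})\cdot\overline{\phi_k}(a^1,\ldots,a^k)-\sum_i\overline{\phi_k}(a^1,\ldots,a^{k+1}_{(-1)}a^i,\ldots,a^k)$, but they run it against different filtrations. The paper works with the \emph{weight} grading: from $\dim\image\overline{\eta}<\infty$ it extracts a $K$ with $\bigoplus_{n\geq K}V_n\subset\Ker\overline{\eta}$ (equation \eqref{hhwiru}), proves by induction (Lemma \ref{yyshwu}) that $\overline{\phi_r}(a^1,\ldots,a^r)=0$ whenever $\sum_i\wt{a^i}\geq rK$, and concludes $(T^d(V)^{S_d})_n\subset C_2(T^d(V)^{S_d})$ for $n\geq dK$, so that $R(T^d(V)^{S_d})$ is a quotient of the finite-dimensional space $\bigoplus_{n<dK}(T^d(V)^{S_d})_n$. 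You instead work with the \emph{polynomial degree} in the commutative algebra $R(T^d(V)^{S_d})$: after observing (via Proposition \ref{asiud}, \eqref{asiduhfsq} and the $L_{-1}$-derivative formula) that $R(T^d(V)^{S_d})$ is generated by $\image\overline{\eta}$, you use the same recursion together with the convention $\phi^{(d)}_{d+1}=0$ to rewrite any degree-$(d+1)$ monomial in $\image\overline{\eta}$ as a combination of monomials of degree at most $d$, so $R(T^d(V)^{S_d})$ is spanned by the at most $\sum_{j\leq d}N^j$ products of $\leq d$ basis elements of $\image\overline{\eta}$, $N=\dim\image\overline{\eta}$. Your closing caveat is exactly the right one: the arguments fed to $\overline{\eta}$ proliferate under the reduction, so finiteness of the full image, not of the image of a generating set, is what makes the count close up. What each approach buys: the paper's yields an explicit weight cutoff $dK$ (which it reuses elsewhere), while yours yields a dimension bound depending only on $N$ and $d$ and makes more transparent the role of the relation $\phi^{(d)}_{d+1}=0$, i.e.\ that $R(T^d(V)^{S_d})$ is a quotient of the degree-$\leq d$ part of the symmetric algebra on $\image\overline{\eta}$.
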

Since $\image \overline{\eta}$ is a subspace of $R(T^d(V)^{  S_d})$ the ``only if" part is clear. 
Thus it is enough to prove the ``if" part. 
We give its proof after the proof of Lemma \ref{yyshwu}. 

Suppose $\image \overline{\eta}$ is finite dimensional. 
We note that $C_2(T^d(V)^{  S_d})$ is a graded subspace of $T^d(V)^{  S_d}$.
Hence if $a\in \Ker\overline{\eta}$ then $\eta(L_0a)=L_0\eta(a)\in C_2(T^d(V)^{  S_d})$. 
Therefore $\Ker \overline{\eta}$ is an $L_0$-invariant subspace, and hence a graded subspace of $V$.
Since $\image \overline{\eta}$ is finite dimensional, there exists a positive integer $K\in\Zplus$ such that 
\begin{align}\label{hhwiru}
\bigoplus_{n\geq K}V_n\subset\Ker \overline{\eta}.   
\end{align}
\begin{lemma}\label{yyshwu}
For any homogeneous vectors $a^1,\cdots,a^r\in V$, if $\sum_{i=1}^r\wt{a^i}\geq rK$ then $\overline{\phi_r}(a^1,\cdots,a^r)=0$. 
\end{lemma}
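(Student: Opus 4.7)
The plan is to prove this by induction on $r$. The base case $r=1$ is immediate from the definition of $K$: the hypothesis reduces to $\wt{a^1}\geq K$, so $a^1\in\Ker\overline{\eta}$ by \eqref{hhwiru}, which gives $\overline{\phi_1}(a^1)=\overline{\eta}(a^1)=0$.

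For the inductive step, fix $r\geq 1$ and assume the statement at level $r$. Given homogeneous $a^1,\ldots,a^{r+1}\in V$ with $\sum_{i=1}^{r+1}\wt{a^i}\geq (r+1)K$, the pigeonhole principle produces some index $j$ with $\wt{a^j}\geq K$. Because the definition of $\phi_{r+1}$ involves averaging over $S_d$, the map $\phi_{r+1}$ is symmetric in its arguments (reindex $\sigma\mapsto\sigma\tau$ in the defining sum), so after relabeling I may assume $j=r+1$. Applying Lemma \ref{asdiusy} with $k=r$ and $a=a^{r+1}$ and then passing to the quotient $R(T^d(V)^{S_d})$, I get
\[
\overline{\phi_{r+1}}(a^1,\ldots,a^{r+1})=\overline{\eta}(a^{r+1})\cdot\overline{\phi_r}(a^1,\ldots,a^r)-\sum_{i=1}^{r}\overline{\phi_r}(a^1,\ldots,a^{r+1}_{(-1)}a^i,\ldots,a^r).
\]
The first term vanishes because $\wt{a^{r+1}}\geq K$ forces $\overline{\eta}(a^{r+1})=0$ via \eqref{hhwiru}. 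For each summand on the right, the $r$ arguments are homogeneous (using $\wt{a^{r+1}_{(-1)}a^i}=\wt{a^{r+1}}+\wt{a^i}$) with total weight $\sum_{j=1}^{r+1}\wt{a^j}\geq (r+1)K\geq rK$, so the inductive hypothesis gives zero.

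I do not foresee a substantive obstacle: the whole argument is essentially the recursion of Lemma \ref{asdiusy} reduced modulo $C_2$, combined with the symmetry of $\phi_k$ and pigeonhole. The only minor verification needed is the symmetry of $\phi_k$ in its arguments, which is a routine reindexing of the sum over $S_d$ in the definition of $\phi_k$.
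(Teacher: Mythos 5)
Your proof is correct and follows essentially the same route as the paper: induction on $r$ using the recursion from Lemma \ref{asdiusy} reduced modulo $C_2$. The only cosmetic difference is that you kill the product term via pigeonhole plus the symmetry of $\phi_{r+1}$, whereas the paper simply observes that either $\wt{a^{r+1}}\geq K$ (so $\overline{\eta}(a^{r+1})=0$) or $\sum_{i=1}^{r}\wt{a^i}\geq rK$ (so $\overline{\phi_r}(a^1,\ldots,a^r)=0$ by the inductive hypothesis), which avoids invoking symmetry altogether.
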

\begin{proof}
We use induction on $r$. 
The case $r=1$ follows from \eqref{hhwiru} immediately.
Let $r\geq 1$ and assume that $\overline{\phi}_r(a^1,\cdots,a^r)=0$ for any homogeneous vectors $a^1,\cdots,a^r\in V$ with $\sum_{i=1}^r\wt{a^i}\geq rK$. 
By Lemma \ref{asdiusy} we have
\begin{align}
\begin{split}\label{wwuw}
\overline{\phi_{r+1}}(a^1,\cdots,a^{r+1})&=\overline{\eta}(a^{r+1})\cdot \overline{\phi_{r}}(a^1,\cdots,a^{r})\\
&\quad-\sum_{i=1}^{r}\overline{\phi_{r}}(a^1,\cdots,a^{r+1}_{(-1)}a^i,\cdots,a^{r}).
\end{split}
\end{align}
for any homogeneous vectors $a^1,\cdots,a^{r+1}\in V$. 
Suppose $\sum_{i=1}^{r+1} \wt{a^{i}}\geq (r+1)K$.
Then we see that
\[
\wt{a^{1}}+\cdots +\wt{a^{r+1}_{(-1)}a^i}+\cdots+\wt{a^r}=\sum_{i=1}^{r+1} \wt{a^{i}}\geq (r+1)K>rK
\]
and that $\wt{a^{r+1}}\geq K$  or $\sum_{i=1}^{r} \wt{a^{i}}\geq rK$ hold. 
Now induction hypothesis implies that the right hand side in \eqref{wwuw} is zero. 
This shows that $\overline{\phi}_{r+1}(a^1,\cdots,a^{r+1})=0$ if $\sum_{i=1}^{r+1} \wt{a^{i}}\geq (r+1)K$. 
The proof is completed. 
\end{proof}
Now we give a proof of Theorem \ref{pqisiw}. 
\begin{proof}[Proof of Theorem \ref{pqisiw}] 
We note that the homogeneous subspace $(T^d(V)^{  S_d})_n$ of weight $n$ is spanned by vectors of the form $\phi_{d}(a^{1},\cdots,a^d)$ with homogeneous vectors $a^1,\cdots,a^d$ subject to $\sum_{i=1}^{d}\wt{a^i}=n$.
Therefore Lemma \ref{yyshwu} shows that if $n\geq dK$ then $(T^d(V)^{ S_d})_n\subset C_2(T^d(V)^{ S_d})$. 
Hence we have 
\[
T^d(V)^{ S_d}=\left(\bigoplus_{n=0}^{Kd-1}(T^d(V)^{ S_d})_n\right)+C_2(T^d(V)^{ S_d}).
\] 
Since each homogeneous subspace $(T^d(V)^{ S_d})_n$ is finite dimensional, we have the theorem. 
\end{proof}

\section{$C_2$-cofiniteness of $T^2(V)^{S_2}$}\label{Sect4}
In this section we restrict ourselves to the case $d=2$.
The aim of this section is to prove the following theorem: 
\begin{theorem}\label{main}
Let $V=\bigoplus_{d=0}^\infty V_d$ be a $C_2$-cofinite simple vertex operator algebra of CFT type. 
Then $T^2(V)^{S_2}$ is also $C_2$-cofinite. 
\end{theorem}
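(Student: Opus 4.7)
The plan is to invoke Theorem \ref{pqisiw} and reduce the problem to showing that $\image\overline{\eta}$ is finite dimensional. Since $V$ is $C_2$-cofinite, $V$ is finitely strongly generated by a finite set $S\subset V$ of $L_0$-eigenvectors. The criterion from \cite{Abe10-1} recalled in the introduction says that $\image\overline{\eta}$ is finite dimensional provided $\dim D(x,y)<\infty$ for all $x,y\in S$, so my target is the uniform bound $\dim D(x,y)<\infty$ for every pair of homogeneous $x,y\in V$, obtained by a three-stage induction on $(\wt{x},\wt{y})$ matching the structure of Sections \ref{Sect4.2}--\ref{Sect4.4}.

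I would first dispatch the base stage $\wt{x}=\wt{y}=1$. Since $V$ is simple and of CFT type, $V_1$ carries the invariant pairing $x_{(1)}y\in V_0=\C\1$, while $x_{(n)}y$ vanishes on weight grounds for $n\geq2$; for $n\leq 0$ the skew-symmetry formula \eqref{skew-sym} together with the inclusion $L_{-1}V\subset C_2(V)$ (see \eqref{asiduhfsq}) cuts the family $\{\overline{\eta}(x_{(-n)}y):n\in\Z\}$ down to finitely many independent vectors, so $\dim D(x,y)<\infty$.

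The critical second stage is $x=\w$ with $y$ arbitrary homogeneous. Here I would substitute $a=b=\w$ and $u=y$ in \eqref{ppsop}, apply $\eta$, and project to $R(T^2(V)^{S_2})$. Using \eqref{hawhyer} to rewrite the $(-1)$-product on the left, and observing that $\w_{(i)}y=L_{i-1}y$ vanishes in CFT type for $i>\wt{y}+1$ so the sum on the right is finite, one obtains the relation
\begin{align*}
\overline{\eta}(\w_{(-m)}\w_{(-n)}\1)\cdot\overline{\eta}(y)-\overline{\phi_2}(\w_{(-m)}\w_{(-n)}\1,y)=\overline{\eta}(\w_{(-m)}\w_{(-n)}y)+\sum_{i=0}^{\wt{y}+1}c_{m,n;i}\,\overline{\eta}(\w_{(-m-n-i)}\w_{(i)}y).
\end{align*}
The $i=1$ summand on the right is proportional to $\overline{\eta}(\w_{(-m-n-1)}y)$ since $\w_{(1)}y=\wt{y}\cdot y$, while the other summands involve $\w_{(i)}y$ with strictly smaller weight. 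Varying $(m,n)$ yields a family of linear relations on the vectors $\overline{\eta}(\w_{(-k)}v)$ as $v$ ranges over the finite-dimensional subspace $\sum_{i\geq 0}\C\,\w_{(i)}y$; the terms $\overline{\eta}(\w_{(-m)}\w_{(-n)}y)$ and the products $\overline{\eta}(\w_{(-m)}\w_{(-n)}\1)\cdot\overline{\eta}(y)$ are handled by an outer induction on $\wt{y}$ combined with \eqref{hawhyer}, reducing them to products of $\overline{\eta}$-values of strictly simpler vectors. This leaves a linear system in the desired $\overline{\eta}(\w_{(-k)}y)$, which forces $\dim D(\w,y)<\infty$ \emph{provided} the coefficient matrix built from the constants \eqref{const004} has full rank for sufficiently many choices of $(m,n)$.

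The third stage treats general homogeneous $x,y$ by induction on $\wt{x}$: if $\wt{x}\geq2$, strong generation writes $x$ as a combination of products $c_{(-k)}d$ with $\wt{c},\wt{d}<\wt{x}$, and Lemma \ref{aiuewy} applied to $x_{(-n)}y$ expresses it in terms of iterated modes $c_{(-p)}d_{(q)}y$ and $d_{(-p)}c_{(q)}y$; the inductive hypothesis together with stages one and two bounds the corresponding $\overline{\eta}$-values in a finite-dimensional subspace of $\image\overline{\eta}$. The main obstacle is squarely the second stage: the whole scheme depends on showing that a specific polynomial in $(m,n)$ assembled from the $c_{m,n;i}$ does not vanish identically, and this is not formal but rather a concrete combinatorial identity whose verification the paper delegates to an explicit Mathematica computation recorded in the Appendix; only the nonvanishing is logically needed. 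Once $\dim D(x,y)<\infty$ has been established uniformly for homogeneous $x,y$, Theorem \ref{pqisiw} delivers the $C_2$-cofiniteness of $T^2(V)^{S_2}$.
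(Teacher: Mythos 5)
Your overall architecture matches the paper's: reduce via Theorem \ref{pqisiw} (through Theorem \ref{ooesuuswf}) to the finiteness of $D(x,y)$, establish this first for weight-one vectors, then for $(\w,y)$, then for general pairs. But your base stage contains a genuine gap. For $x,y\in V_1$ the space $D(x,y)$ is spanned by $\overline{\eta}(x_{(-n)}y)$ for $n\in\Zplus$, and these are infinitely many vectors sitting in \emph{distinct} graded pieces of $R(T^2(V)^{S_2})$ (the vector $x_{(-n)}y$ has weight $n+1$); finiteness is equivalent to $\overline{\eta}(x_{(-n)}y)=0$ for all large $n$. Skew-symmetry together with $L_{-1}V\subset C_2(V)$ only yields $\overline{\eta}(x_{(-n)}y)=(-1)^{n-1}\overline{\eta}(y_{(-n)}x)$, which kills the even-$n$ terms when $x=y$ and otherwise merely identifies two infinite families; it does not ``cut the family down to finitely many independent vectors.'' This weight-one case is in fact one of the two computational cores of the paper (Lemmas \ref{sdhfw}--\ref{hdsba}): one must expand $\overline{\eta}(x_{(-m)}x_{(-n)}\1)\cdot\overline{\eta}(x_{(-p)}y)$ in the Poisson algebra via Lemma \ref{aiuewy}, compare the resulting relations for different orderings of the indices, and verify that an explicit $2\times2$ determinant of rational functions (the expression \eqref{det002}) is nonzero for large $p$ --- a nonvanishing check of exactly the same nature as the Mathematica computation you correctly flag in the Virasoro stage. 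Without this, stage one is unproven, and stage two collapses with it, since the paper's Lemma \ref{lemma0001} (the case $D(\w,x)$ with $x\in V_1$, the base of the induction on $\wt{x}$) relies on Corollary \ref{fygvyew}, which is a consequence of the weight-one result.

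Two further points. In stage two, the term $\overline{\eta}(\w_{(-m)}\w_{(-n)}y)=\overline{\eta}(L_{-m+1}L_{-n+1}y)$ still contains $y$ at its full weight, so it cannot be ``handled by an outer induction on $\wt{y}$''; the paper instead eliminates it by computing $\overline{\eta}(L_{-m}L_{-p}L_{-q}x)$ in two ways (swapping $p$ and $q$) and proving the nonvanishing of the resulting $2\times2$ determinant (the polynomials $f_0,f_1$ of the Appendix). In stage three, your reduction via strong generation fails as stated: the finite generating set may itself contain vectors of weight $\geq2$, which are not products of lower-weight vectors, and Lemma \ref{aiuewy} only governs the $(-1)$-st product $(a_{(-m)}b_{(-n)}\1)_{(-1)}u$, not $(a_{(-m)}b_{(-n)}\1)_{(-k)}u$ for general $k$. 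The paper's route (Lemma \ref{hojo004}) is different and does close: it applies the commutativity formula to $L_{-m}x_{(0)}y-x_{(0)}L_{-m}y$, producing $(p-1)(-m+1)x_{(-m)}y$ plus terms $(L_ix)_{(-m-i)}y$ with $\wt{L_ix}<\wt{x}$, and inducts on $\max\{\wt{x},\wt{y}\}$ using the stage-one and stage-two results as the hypotheses (3) and (4) on $W=\Ker\overline{\eta}$.
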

This theorem follows from Theorem \ref{ooesuuswf} proved in \cite{Abe10-1} and Theorem \ref{premain} below whose proof is given by dividing four subsections; Sections \ref{Sect4.1}--\ref{Sect4.4}. 

Through this section, we write $\phi$ and $\eta$ for $\phi_2^{(2)}$ and $\eta^{(2)}$, respectively. 
\subsection{Known facts}\label{Sect4.1}
Suppose $V$ is $C_2$-cofinite and of CFT type.
As mentioned in Section \ref{Sect2}, $V$ is strongly generated by a suitable finite subset $S$ consisting of homogeneous vectors.  
In \cite{Abe10-1}, it is proved that for such a subset $S$, $\image\overline{\eta}$ is finite dimensional if and only if the subspace $\haru{\overline{\eta}(x_{(-n)}y)}{x,y\in S,n\in\Zplus}$ is finite dimensional. 
Thus by Theorem \ref{pqisiw}, we have the following theorem.
\begin{theorem}\label{ooesuuswf}{\rm (\cite{Abe10-1})}
Let $V$ be a $C_2$-cofinite simple vertex operator algebra of CFT type, and $S$ a finite subset of $V$ consisting of homogeneous vectors.
Suppose $V$ is strongly generated by $S$. 
Then $T^2(V)^{S_2}$ is $C_2$-cofinite if and only if the subspace $\haru{\overline{\eta}(x_{(-n)}y)}{x,y\in S,n\in\Zplus}$ of $R(T^2(V)^{S_2})$ is finite dimensional. 
\end{theorem}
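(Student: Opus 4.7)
The forward direction is immediate, since the subspace $W := \haru{\overline{\eta}(x_{(-n)}y)}{x,y\in S,n\in\Zplus}$ sits inside $\image \overline{\eta}$ and is therefore finite dimensional whenever $\image\overline{\eta}$ is (which by Theorem \ref{pqisiw} is equivalent to $C_2$-cofiniteness of $T^2(V)^{S_2}$). My plan for the converse is to assume $W$ is finite dimensional and exhibit a finite dimensional subspace of $R(T^2(V)^{S_2})$ containing all of $\image \overline{\eta}$. A natural candidate is $W+W_1$ where $W_1:=\C\overline{\eta}(\1)+\sum_{a\in S}\C\overline{\eta}(a)$. Since $V$ is strongly generated by $S$, every vector is a linear combination of monomials $a^1_{(-n_1)}\cdots a^r_{(-n_r)}\1$ with $a^i\in S$ and $n_i\in\Zplus$, so it suffices to show $\overline{\eta}$ of each such monomial lies in $W+W_1$, and I would do so by induction on the length $r$.

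The base cases $r\leq 2$ are handled by the elementary fact that $\overline{\eta}$ kills $L_{-1}V$: $\eta\circ L_{-1}=L_{-1}\circ\eta$ together with \eqref{asiduhfsq} applied in $T^2(V)^{S_2}$ gives $\eta(L_{-1}V)\subset C_2(T^2(V)^{S_2})$. For $r=0$ this yields $\overline{\eta}(\1)\in W_1$. For $r=1$, writing $a_{(-n)}\1=L_{-1}^{n-1}a/(n-1)!$ gives $\overline{\eta}(a_{(-n)}\1)=\overline{\eta}(a)\in W_1$ when $n=1$ and $0$ otherwise. For $r=2$, iterating the commutation $[L_{-1},a_{(-n_1)}]=n_1a_{(-n_1-1)}$ on $a_{(-n_1)}L_{-1}^{n_2-1}b$ expresses the length-$2$ monomial, modulo $L_{-1}V$, as a nonzero scalar multiple of $a_{(-n_1-n_2+1)}b$, placing $\overline{\eta}(a_{(-n_1)}b_{(-n_2)}\1)$ in $W$.

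For the inductive step $r\geq 3$, I would write the monomial as $v=a^1_{(-n_1)}a^2_{(-n_2)}u$ with $u$ of length $r-2$ and apply Lemma \ref{aiuewy}, which expresses $v$ as $(a^1_{(-n_1)}a^2_{(-n_2)}\1)_{(-1)}u$ minus a finite sum of correction terms of the form $a^i_{(-m)}(a^j_{(k)}u)$ with $k\geq 0$. Applying $\eta$ and using the mod-$C_2$ consequence $\overline{\eta}(c_{(-1)}u)=\overline{\eta}(c)\cdot\overline{\eta}(u)-\overline{\phi}(c,u)$ of \eqref{hawhyer} yields
\[
\overline{\eta}(v)=\overline{\eta}(a^1_{(-n_1)}a^2_{(-n_2)}\1)\cdot\overline{\eta}(u)-\overline{\phi}(a^1_{(-n_1)}a^2_{(-n_2)}\1,u)-\sum_{i\geq 0}(\cdots),
\]
in which the first factor of the product belongs to $W$ by the $r=2$ case, $\overline{\eta}(u)$ is controlled by the length induction, and each correction $\overline{\eta}(a^i_{(-m)}(a^j_{(k)}u))$ involves a vector $a^j_{(k)}u$ of strictly smaller $\wt{\cdot}$ than $u$. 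The principal obstacle is the $\overline{\phi}$-term: naively substituting $\overline{\phi}(c,u)=\overline{\eta}(c)\cdot\overline{\eta}(u)-\overline{\eta}(c_{(-1)}u)$ back into the display collapses the relation to a tautology. To break this circularity I would run a simultaneous secondary induction on $\wt{u}$, using $C_2$-cofiniteness of $V$ to fix a finite dimensional complement $U$ of $C_2(V)$ in $V$ and replace $u$ by a representative supported on $U$ modulo lower-weight error, thereby reducing all $\overline{\phi}$-contributions to a finite collection that is itself absorbed into $W+W_1$ via \eqref{hawhyer}. This dovetailing of length and weight inductions, with $C_2$-cofiniteness of $V$ feeding in exactly where the straight identities tautologize, is the technical heart of the proof.
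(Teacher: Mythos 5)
The forward direction is fine (and even easier than you make it: $W:=\haru{\overline{\eta}(x_{(-n)}y)}{x,y\in S,n\in\Zplus}$ is a subspace of $R(T^2(V)^{S_2})$, so $C_2$-cofiniteness alone bounds it). The converse, however, has a genuine gap. Your plan is to prove the linear containment $\image\overline{\eta}\subset W+W_1$ by induction on the length of monomials, but the inductive step produces the Poisson product $\overline{\eta}(a^1_{(-n_1)}a^2_{(-n_2)}\1)\cdot\overline{\eta}(u)$, and a product of two elements of a finite-dimensional \emph{linear} subspace of $R(T^2(V)^{S_2})$ has no reason to lie back in that subspace: $W+W_1$ is not a subalgebra, and in general it is not even true that $\image\overline{\eta}\subset W+W_1$ (already $\overline{\eta}(a)\cdot\overline{\eta}(b)=\overline{\eta}(a_{(-1)}b)+\overline{\phi_2}(a,b)$ leaves $\image\overline{\eta}$). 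So "the first factor belongs to $W$ and $\overline{\eta}(u)$ is controlled by induction" does not place the product in $W+W_1$. You correctly notice that the $\overline{\phi}$-term makes the naive identity circular, but the proposed remedy (a dovetailed induction on length and weight using a complement of $C_2(V)$) is a statement of intent rather than an argument; it does not say how either the product term or the $\overline{\phi}$-term is actually absorbed, and that is precisely where the proof has to happen.

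The missing idea --- the one the paper itself relies on (it quotes the equivalence from \cite{Abe10-1} and combines it with Theorem \ref{pqisiw}; the mechanism is visible in Lemma \ref{yyshwu} and Corollary \ref{fygvyew}) --- is to exploit the grading rather than a containment. Each $\overline{\eta}(x_{(-n)}y)$ is homogeneous of weight $\wt{x}+\wt{y}+n-1$, so finite dimensionality of $W$ forces $\overline{\eta}(x_{(-n)}y)=0$ for all $x,y\in S$ and all $n\geq N$. One then shows by induction that $\overline{\eta}$ annihilates every monomial $a^1_{(-n_1)}\cdots a^r_{(-n_r)}\1$ of sufficiently large weight: in the identity you wrote down, the product $\overline{\eta}(a^1_{(-n_1)}a^2_{(-n_2)}\1)\cdot\overline{\eta}(u)$ is handled not by locating it in a fixed subspace but by showing that \emph{one of its two factors is zero} (either $n_1+n_2$ is large or $u$ has large weight), the correction terms vanish by induction, and the $\overline{\phi}$-term is converted via Lemma \ref{lemma4-1}(1) into another $\overline{\eta}$ of a monomial rather than reinserted tautologically. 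This yields $\bigoplus_{n\geq K}V_n\subset\Ker\overline{\eta}$ for some $K$, hence $\image\overline{\eta}$ is finite dimensional, and Theorem \ref{pqisiw} finishes the proof. Your base cases $r\leq 2$ (via $L_{-1}V\subset\Ker\overline{\eta}$ and Lemma \ref{lemma4-1}(2)) are correct and are exactly what makes the vanishing-for-large-$n$ statement usable, but the inductive engine needs to be rebuilt around vanishing, not containment.
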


We now consider a subspace 
\[
D(x,y)=\haru{\overline{\eta}(x_{(-n)}y)}{n\in\Zplus}
\]
for any $x,y\in V$.  
We shall prove the following theorem. 
\begin{theorem}\label{premain}
Let $V=\bigoplus_{d=0}^\infty V_d$ be a simple vertex operator algebra of CFT type. 
Then $D(x,y)$ is finite dimensional for any $x,y\in V$. 
\end{theorem}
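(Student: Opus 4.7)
The plan is to follow the division of the proof into Sections~\ref{Sect4.2}--\ref{Sect4.4} indicated in the introduction. By bilinearity of $\overline{\eta}$ and the gradation of $V$ I first reduce to $x, y$ homogeneous. A uniform preliminary is the identity
\[
\overline{\eta}(a_{(m)}b) = (-1)^{m-1}\overline{\eta}(b_{(m)}a)\qquad (a,b\in V,\ m\in\Z),
\]
which follows from the skew-symmetry formula \eqref{skew-sym} together with $\overline{\eta}(L_{-1}V)=0$; the latter holds because the Virasoro vector of $T^2(V)^{S_2}$ is $\w\otimes\1+\1\otimes\w$, so $L_{-1}\eta(v)=\eta(L_{-1}v)$, and $L_{-1}W\subset C_2(W)$ for any VOA $W$ by \eqref{asiduhfsq}. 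In particular $D(x,y)=D(y,x)$, and we may freely dump any $L_{-1}$-image that appears after applying $\overline{\eta}$.

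For the weight-one base case ($x,y\in V_1$, Section~\ref{Sect4.2}), I would apply Lemma~\ref{aiuewy} with $a=x$, $b=y$, and $u\in V$. Because $V$ is of CFT type, $x_{(i)}y=0$ for $i\geq 2$ and $x_{(1)}y\in\C\1$; the $\C\1$-contribution produces an $L_{-1}$-image that is killed by $\overline{\eta}$, and the remaining $V_1$-valued correction involves $x_{(0)}y\in V_1$ (so $V_1$ plays the role of a Lie algebra under the $0$-th product). Iterating the resulting relations inside the finite-dimensional piece spanned by $\overline{\eta}(z_{(-k)}y)$ with $z\in V_1$ then forces $D(x,y)$ into a finite-dimensional ambient space.

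For $x=\w$ and arbitrary homogeneous $y$ (Section~\ref{Sect4.3}, the main technical step), I would apply \eqref{ppsop} with $a=\w$, $u=y$, and use $\w_{(i)}y=L_{i-1}y$. Varying $(m,n)$ over a range of values and applying $\overline{\eta}$ yields a family of linear relations in the unknowns $\overline{\eta}(\w_{(-N)}L_k y)$ $(k\geq 0)$, whose coefficients are polynomial in $m,n$ (with parameters $c$ and $\wt y$ from $\w_{(3)}\w=\tfrac{c}{2}\1$ and $L_0 y=\wt y\cdot y$). Eliminating the $k\geq 1$ unknowns between these relations produces a single-variable recursion on $\overline{\eta}(\w_{(-n)}y)$ whose leading coefficient is a polynomial in $n$; once this polynomial is nonzero, $\overline{\eta}(\w_{(-n)}y)$ is determined for $n$ large by finitely many earlier terms, giving $\dim D(\w,y)<\infty$. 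The main obstacle is precisely the verification that the eliminated polynomials do not vanish identically; this is the lengthy Mathematica-assisted computation flagged in the introduction and placed in the Appendix, and I would simply replicate it.

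For general homogeneous $x,y$ (Section~\ref{Sect4.4}), I would induct on $\wt y$ (the roles of $x,y$ being exchangeable by the symmetry from the first paragraph). The cases $\wt y\leq 1$ reduce either to $y\in\C\1$ (where $x_{(-n)}y\in L_{-1}V$ for $n\geq 2$, so $D(x,y)$ is at most one-dimensional) or to the situation of Section~\ref{Sect4.2} after the symmetry $D(x,y)=D(y,x)$. For $\wt y\geq 2$, I would write $y$ as a linear combination of products $a_{(-k)}b$ with $1\leq\wt a,\wt b<\wt y$ (available because $V$ is simple and of CFT type, hence strongly generated by its positive-weight part together with $L_{-1}$-images that vanish under $\overline{\eta}$). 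Rewriting $x_{(-n)}(a_{(-k)}b)$ via the commutativity formula reduces $\overline{\eta}(x_{(-n)}y)$, after skew-symmetry and the base case for $a$ of weight one or $a=\w$, to $\overline{\eta}$-images of products with second factor of strictly smaller weight. The inductive hypothesis then bounds $\dim D(x,y)$, completing the proof.
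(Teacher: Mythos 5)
Your outline matches the paper's three-stage division, but the last stage contains a genuine gap. For general homogeneous $x,y$ you propose to induct on $\wt{y}$ by writing $y$ as a linear combination of products $a_{(-k)}b$ with $1\leq\wt{a},\wt{b}<\wt{y}$, claiming this is available because $V$ is ``strongly generated by its positive-weight part.'' That decomposition does not exist in general: strong generation by a set $S$ does not make each homogeneous vector of weight $d$ a product of vectors of weight strictly less than $d$, since the generators themselves need not be such products (e.g.\ a simple CFT-type VOA with $V_1=0$ and $\dim V_2>1$, such as the moonshine module, has weight-$2$ vectors that cannot be written as products of weight-$0$ and weight-$1$ vectors). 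Moreover Theorem \ref{premain} assumes neither $C_2$-cofiniteness nor finite generation, so there is no finiteness to fall back on. The paper instead proves Lemma \ref{hojo004}: it inducts on $\max\{\wt{x},\wt{y}\}$ and uses the commutativity formula for $[L_{-m},x_{(0)}]$ applied to $y$, which expresses $(p-1)(m-1)\,x_{(-m)}y$ modulo $\Ker\overline{\eta}$ as $\sum_{i\geq1}\binom{-m+1}{i+1}(L_ix)_{(-m-i)}y$, lowering the weight of the \emph{first} factor via $L_i$; the base inputs are exactly $D(\w,u)$ finite (Section \ref{Sect4.3}) and the weight-one case (Section \ref{Sect4.2}). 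No decomposition of $y$ is needed. You should replace your Section \ref{Sect4.4} step with an argument of this kind.

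A secondary but substantive omission: in both of your first two stages you never identify where the linear relations among the $\overline{\eta}(x_{(-n)}y)$ actually come from. Applying Lemma \ref{aiuewy} or \eqref{ppsop} and then $\overline{\eta}$ only relates $\overline{\eta}((a_{(-m)}b_{(-n)}\1)_{(-1)}u)$ to other unknowns; the engine of the paper's proof is the Poisson-product identity $\overline{\eta}(A)\cdot\overline{\eta}(B)=\overline{\eta}(A_{(-1)}B)+\overline{\phi_2}(A,B)$ from \eqref{hawhyer}, combined with Lemma \ref{lemma4-1}(1) to convert $\overline{\phi_2}(x_{(-m)}x_{(-n)}\1,\,u)$ back into an $\overline{\eta}$-image, and with the vanishing of $\overline{\eta}(x_{(-m)}x_{(-n)}\1)$ (or $\overline{\eta}(L_{-m}L_{-n}\1)$, via Lemma \ref{thm-Vira}) for suitable $(m,n)$. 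Only then does setting the product to zero yield two independent expressions whose coefficient determinant must be checked to be a nonzero polynomial. Without invoking $\overline{\phi_2}$ and Lemma \ref{lemma4-1} your ``family of linear relations'' has no source, so the elimination step you describe cannot get started.
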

It is clear that the main theorem, Theorem \ref{main}, follows from Theorems \ref{ooesuuswf} and \ref{premain}. 
Theorem \ref{premain} has been proved for $x=y=\w$ in \cite{Abe10-1}:
\begin{lemma}\label{thm-Vira}{\rm (\cite{Abe10-1})}
Let $V$ be a simple vertex operator algebra of CFT type.
Then $D(\w,\w)$ is finite dimensional. 
\end{lemma}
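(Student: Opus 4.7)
My plan is to prove separately (i) that $\overline{\eta}(\w_{(-n)}\w) = 0$ for every even $n$, and (ii) that the odd-$n$ contributions to $D(\w,\w)$ span a finite-dimensional subspace. Together these yield $\dim D(\w, \w) < \infty$.

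For (i), I would apply the skew-symmetry formula \eqref{skew-sym} with $a = b = \w$ and $m = -n$:
\begin{equation*}
\w_{(-n)}\w = \sum_{i \geq 0}\frac{(-1)^{n+1+i}}{i!}(\w_{(-n+i)}\w)_{(-1-i)}\1.
\end{equation*}
The crucial point is that $\eta$ intertwines $L_{-1}$, so $\eta(L_{-1}V) \subset L_{-1}T^2(V)^{S_2} \subset C_2(T^2(V)^{S_2})$ by \eqref{asiduhfsq}; hence $L_{-1}V \subset \Ker \overline{\eta}$. For $i \geq 1$, the factor $(\w_{(-n+i)}\w)_{(-1-i)}\1 = L_{-1}^i(\w_{(-n+i)}\w)/i!$ lies in $L_{-1}V$, while the $i = 0$ term equals $(-1)^{n+1}\w_{(-n)}\w$. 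Therefore $(1 - (-1)^{n+1})\w_{(-n)}\w \in L_{-1}V$; for $n$ even this reads $2\w_{(-n)}\w \in L_{-1}V$, whence $\overline{\eta}(\w_{(-n)}\w) = 0$.

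For (ii), I would first reduce $\overline{\eta}(\w_{(-k)}\w)$ with $k \geq 2$ to a $\overline{\phi}$-expression. From $[L_{-1}, \w_{(-k+1)}] = (k-1)\w_{(-k)}$ one obtains $\w_{(-k)}\w = \frac{1}{k-1}\bigl(L_{-1}(\w_{(-k+1)}\w) - \w_{(-k+1)}L_{-1}\w\bigr)$, so iterating and applying the Poisson identity $\overline{\eta}(\w_{(-1)}b) = \overline{\eta}(\w)\cdot\overline{\eta}(b) - \overline{\phi}(\w,b)$ at the last step (with $b = L_{-1}^{k-1}\w \in L_{-1}V$, so $\overline{\eta}(b) = 0$) yields the clean formula
\begin{equation*}
\overline{\eta}(\w_{(-k)}\w) = \frac{(-1)^{k}}{(k-1)!}\overline{\phi}(\w, L_{-1}^{k-1}\w) \qquad (k \geq 2).
\end{equation*}
Thus $D(\w,\w)$ is spanned by $\overline{\eta}(\w_{(-1)}\w)$ together with $\{\overline{\phi}(\w, L_{-1}^j\w) : j \geq 1\}$, and (ii) reduces to showing the latter has finite-dimensional span.

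I would bound $\{\overline{\phi}(\w, L_{-1}^j\w)\}$ by exploiting two families of $C_2$-relations in $T^2(V)^{S_2}$. First, $L_{-1}^j(\w \otimes \w) \in C_2(T^2(V)^{S_2})$ for $j \geq 1$, expanded binomially and simplified via $\overline{\phi}(L_{-1}a, b) = -\overline{\phi}(a, L_{-1}b)$ (which follows from $L_{-1}\phi(a,b) \in C_2$), collapses to a nonzero scalar multiple of $\overline{\phi}(\w, L_{-1}^j\w)$ precisely when $j$ is odd, giving $\overline{\phi}(\w, L_{-1}^j\w) = 0$ for such $j$ (this recovers (i) through the formula above). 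For the remaining even-$j$ case, which encodes odd $n$, I would use $\phi(\w,\w)_{(-2)}\eta(L_{-1}^j\w) \in C_2(T^2(V)^{S_2})$: unlike the tensor-sum operator $L_{-3}^{(T^2(V))} = \eta(\w)_{(-2)}$, the product $(\w\otimes\w)_{(-2)}$ mixes the two tensor factors via the tensor-product vertex operator $(u\otimes v)_{(n)}(w\otimes x) = \sum_{m+\ell = n-1}u_{(m)}w\otimes v_{(\ell)}x$ and produces identities whose leading $j$-polynomial coefficients involve the central charge $c$ and normal-ordered products of Virasoro elements. The main obstacle is verifying that this leading coefficient is a nonzero polynomial for all large even $j$; this is an explicit arithmetic computation, of the kind carried out via Mathematica in Sections~\ref{Sect4.3}--\ref{Sect4.4} of the present paper, and simplicity of $V$ enters at this point to exclude degenerate cancellations among the resulting scalar constants.
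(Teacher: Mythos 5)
Your part (i) and your reduction formula are both correct: the skew-symmetry argument does give $\overline{\eta}(\w_{(-n)}\w)=0$ for even $n$ (this also follows at once from Lemma \ref{lemma4-1}(2) with $x=y=\w$), and the iteration via $[L_{-1},\w_{(-k+1)}]=(k-1)\w_{(-k)}$ together with \eqref{hawhyer} and $L_{-1}V\subset\Ker\overline{\eta}$ does yield $\overline{\eta}(\w_{(-k)}\w)=\frac{(-1)^k}{(k-1)!}\overline{\phi}(\w,L_{-1}^{k-1}\w)$ for $k\geq 2$. But these steps only repackage the problem; keep in mind that the present paper does not prove Lemma \ref{thm-Vira} at all (it is imported from \cite{Abe10-1}), and the entire substance of that proof is precisely what your part (ii) leaves open, namely the odd-$n$ (even-$j$) case.

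There are two concrete problems with part (ii). First, your ``first family'' of relations is circular: expanding $L_{-1}^j\phi(\w,\w)$ binomially and applying $\overline{\phi}(L_{-1}a,b)=-\overline{\phi}(a,L_{-1}b)$ (which is itself equivalent to $L_{-1}\phi(a,b)\in C_2$) produces the total coefficient $\sum_{i=0}^{j}(-1)^i\binom{j}{i}=0$, so the identity reads $0=0$ for every $j$ — it does not collapse to a nonzero multiple of $\overline{\phi}(\w,L_{-1}^j\w)$ for odd $j$. (The odd-$j$ vanishing is true, but it follows from the symmetry $\phi(a,b)=\phi(b,a)$ combined with the antisymmetry relation, and in any case it merely duplicates (i).) This miscalculation is a warning: relation families of this kind can easily be vacuous, so nontriviality must be checked, not assumed. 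Second, for even $j$ — the only case that matters — you never exhibit a single verified nontrivial relation: you name a candidate family ($\phi(\w,\w)_{(-2)}\eta(L_{-1}^j\w)\in C_2(T^2(V)^{S_2})$), assert its coefficients are polynomials in $j$, and defer the crucial nonvanishing to an unperformed computation, with simplicity invoked in an unspecified way. Compare with how this paper handles the directly analogous statements (the proofs of Lemmas \ref{sdhfw} and \ref{lemma0003}): there one expands a product such as $\overline{\eta}(\w_{(-m)}\w_{(-n)}\1)\cdot\overline{\eta}(\w_{(-p)}\w)$ in two ways with permuted parameters, obtains two linear relations among the targets, and the proof is complete only once the resulting $2\times 2$ determinant is shown to be a nonzero polynomial in the free parameter, forcing $\overline{\eta}(\w_{(-N)}\w)=0$ for all large $N$. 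Until you produce such relations and verify a nonvanishing determinant (or equivalent), the finite-dimensionality of $D(\w,\w)$ — the actual claim of the lemma — remains unproven.
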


We start to prove Theorem \ref{premain}.
As for the finitenss of the dimensoion of $D(x,y)$ for $x,y\in V$, it is worth giving the following remark.      
\begin{remark} 
For homogeneous vectors $x,y \in V$ and integer $n\in\Z$, $\overline{\eta}(x_{(n)}y)$ is a homogeneous vector of $R(T^2(V)^{S_2})$ with respect to the natural grading; 
\begin{align*}
R(T^2(V)^{S_2})=\bigoplus_{d=0}^\infty ((T^2(V)^{S_2})_d+C_2(T^2(V)^{S_2}))/C_2(T^2(V)^{S_2}).
\end{align*} 
Therefore $\dim D(x,y)$ is finite if and only if there exists a positive integer $N$ such that $\overline{\eta}(x_{(-n)}y)=0$ for all $n\geq N$.  
\end{remark} 
We also use the following lemma frequently.
\begin{lemma}\label{lemma4-1}{\rm (\cite{Abe10-1})}
(1) For $x,y,z\in V$ and $n\geq 2$, 
\[
\overline{\phi}(x_{(-n)}y,z)=-\overline{\phi}(y,x_{(-n)}z).
\]
(2) For $x,y\in V$, 
\[
\overline{\eta}(x_{(-m)}y_{(-n)}\1)=(-1)^{n-1}\binom{m+n-2}{n-1}\overline{\eta}(x_{(-m-n+1)}y)=\overline{\eta}(y_{(-n)}x_{(-m)}\1)
\]
\end{lemma}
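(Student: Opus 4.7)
The plan is to work in the tensor product structure and to exploit that $L_{-1}V\subset C_2(V)$ by \eqref{asiduhfsq}, combined with $L_{-1}\eta(v)=\eta(L_{-1}v)$ and $L_{-1}(T^2(V)^{S_2})\subset C_2(T^2(V)^{S_2})$, so that $\overline{\eta}(L_{-1}v)=0$ for every $v\in V$. The second key point is that the tensor-product VOA structure satisfies $(x\otimes\1)_{(n)}(u\otimes v)=(x_{(n)}u)\otimes v$ and $(\1\otimes x)_{(n)}(u\otimes v)=u\otimes(x_{(n)}v)$ for every $n\in\Z$, as an immediate consequence of the definition of the tensor product vertex operator and $Y(\1,\cdot)=\id$.

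For part (1), I apply the second point symmetrically to $\phi(y,z)=y\otimes z+z\otimes y$ to get
\begin{align*}
\eta(x)_{(-n)}\phi(y,z)=\phi(x_{(-n)}y,z)+\phi(y,x_{(-n)}z).
\end{align*}
The identity $(L_{-1}A)_{(-n+1)}=(n-1)A_{(-n)}$, iterated $n-2$ times, gives $A_{(-n)}B=\frac{1}{(n-1)!}(L_{-1}^{n-2}A)_{(-2)}B\in C_2(T^2(V)^{S_2})$ for $n\geq 2$ whenever $A,B\in T^2(V)^{S_2}$. Setting $A=\eta(x)$ and $B=\phi(y,z)$ shows that the left-hand side is in $C_2(T^2(V)^{S_2})$, giving part (1).

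For the first equality of part (2), I rewrite $y_{(-n)}\1=\tfrac{1}{n-1}L_{-1}(y_{(-n+1)}\1)$ for $n\geq 2$ (using $v_{(-1-i)}\1=L_{-1}^iv/i!$) and apply the derivation identity $[L_{-1},x_{(-m)}]=m\,x_{(-m-1)}$ to obtain
\begin{align*}
x_{(-m)}y_{(-n)}\1=\frac{1}{n-1}L_{-1}\bigl(x_{(-m)}y_{(-n+1)}\1\bigr)-\frac{m}{n-1}x_{(-m-1)}y_{(-n+1)}\1.
\end{align*}
Applying $\overline{\eta}$ kills the $L_{-1}$-term and yields the recursion $\overline{\eta}(x_{(-m)}y_{(-n)}\1)=-\tfrac{m}{n-1}\overline{\eta}(x_{(-m-1)}y_{(-n+1)}\1)$. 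Iterating $n-1$ times telescopes to $\overline{\eta}(x_{(-m-n+1)}y)$ with coefficient $(-1)^{n-1}\frac{m(m+1)\cdots(m+n-2)}{(n-1)!}=(-1)^{n-1}\binom{m+n-2}{n-1}$.

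For the second equality, the skew-symmetry \eqref{skew-sym} yields $a_{(-k)}b\equiv -(-1)^kb_{(-k)}a\pmod{L_{-1}V}$, because every correction term in \eqref{skew-sym} is a multiple of $L_{-1}$ applied to something. Applied with $k=m+n-1$, this translates $\overline{\eta}(x_{(-m-n+1)}y)$ into a scalar multiple of $\overline{\eta}(y_{(-m-n+1)}x)$; comparing with the first equality after exchanging $(x,m)\leftrightarrow(y,n)$ and using $\binom{m+n-2}{n-1}=\binom{m+n-2}{m-1}$ together with the elementary sign check $(-1)^{n-1}(-1)^{k+1}=(-1)^{m-1}$ (with $k=m+n-1$) completes the argument. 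The main obstacle is purely bookkeeping: one must carefully track the signs and the telescoping product of the factors $\tfrac{m+j}{n-1-j}$ through the iterated recursion; conceptually the entire lemma reduces to the vanishing of $\overline{\eta}$ on $L_{-1}V$.
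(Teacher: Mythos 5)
Your proof is correct: both parts reduce, as they should, to the vanishing of $\overline{\eta}$ and $\overline{\phi}$ on anything of the form $L_{-1}(\cdot)$, via the translation identity $(L_{-1}A)_{(k)}=-kA_{(k-1)}$, the tensor-factorization of modes of $x\otimes\1$ and $\1\otimes x$, and skew-symmetry modulo $L_{-1}V$; the sign and binomial bookkeeping in the telescoping recursion checks out, including the comparison $(-1)^{n-1}(-1)^{m+n}=(-1)^{m-1}$ for the second equality of (2). The paper itself states this lemma without proof, citing \cite{Abe10-1}, so there is no in-text argument to compare against, but your route is the standard one and is consistent with the identities (e.g.\ \eqref{asiduhfsq}, \eqref{skew-sym}, Lemma \ref{aiuewy}) that the paper does set up.
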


\subsection{Proof of Theorem \ref{premain} for weight one vectors}\label{Sect4.2}  
In this section we give a proof of Theorem \ref{premain} for $x,y\in V_1$. 
\begin{proposition}\label{mainweight1}
Let $V$ be a simple vertex operator algebra of CFT type.
Then for any $x,y\in V_1$, $D(x,y)$ is finite dimensional.  
\end{proposition}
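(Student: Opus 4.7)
Since $\overline{\eta}(x_{(-n)}y)$ is homogeneous of weight $n+1$ in the natural grading of $R(T^2(V)^{S_2})$, elements with distinct $n$ lie in distinct graded components, so $\dim D(x,y)<\infty$ is equivalent to $\overline{\eta}(x_{(-n)}y)=0$ for all $n$ beyond some bound. My plan is to derive a nontrivial linear recursion on these elements and conclude by induction on $n$.

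Two preliminary remarks will be used. First, $\overline{\eta}$ annihilates $L_{-1}V$: indeed $\eta(L_{-1}a)=L_{-1}^{T^2(V)}\eta(a)\in C_2(T^2(V)^{S_2})$, by the analogue of \eqref{asiduhfsq} for $T^2(V)^{S_2}$. Second, a direct computation in the tensor product VOA gives $\eta(a)_{(-n)}\eta(b)=\eta(a_{(-n)}b)+\phi(a_{(-n)}\1,b)$ for $n\geq 2$; combined with the general fact that $u_{(-n)}v\in C_2(W)$ for $n\geq 2$ in any VOA $W$, this yields
\[
\overline{\eta}(x_{(-n)}y)=-\overline{\phi}(x_{(-n)}\1,y)\qquad(n\geq 2).
\]
Applying Lemma~\ref{lemma4-1}(1) to the right-hand side just returns the same identity, so a genuinely different relation is needed.

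The heart of the argument is to apply the associativity formula, Lemma~\ref{aiuewy}, to $(x_{(-1)}y_{(-n)}\1)_{(-1)}u$ for carefully chosen $u\in V_1$ (for instance $u=y$, $u=x$, or $u=x_{(0)}y$). Because $x,y,u\in V_1$, the modes $x_{(i)}u,y_{(i)}u$ vanish for $i\geq 2$ and lie in $\C\1$ for $i=1$, so the sum on the right-hand side of Lemma~\ref{aiuewy} truncates to $i=0,1$, and every resulting vector of $V$ is either in $L_{-1}V$ or of the form $x_{(-k)}u$, $y_{(-k)}u$, or $y_{(-k)}(x_{(0)}u)$. Applying $\overline{\eta}$, discarding the $L_{-1}V$ contributions, and using Lemma~\ref{lemma4-1}(2) to substitute $\overline{\eta}(x_{(-1)}y_{(-n)}\1)=(-1)^{n-1}\overline{\eta}(x_{(-n)}y)$, one obtains a linear relation with explicit polynomial-in-$n$ coefficients among the elements $\overline{\eta}(z_{(-k)}w)$ for $z,w$ in a fixed finite subset of $V_1$ and $k$ close to $n$. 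Letting $u$ vary over a basis of $V_1$ produces a simultaneous system whose coefficient matrix I expect to be invertible for $n$ large, forcing $\overline{\eta}(x_{(-n)}y)=0$ for $n\gg 0$.

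The main obstacle is that most natural manipulations collapse to tautologies once reduced modulo $C_2(T^2(V)^{S_2})$: both $\overline{\eta}$ and $\overline{\phi}$ kill $L_{-1}V$, and Lemma~\ref{lemma4-1} already encodes the Borcherds-level symmetries. The delicate point is therefore to select a combination of associativity identities whose polynomial coefficients are not identically zero, so that the recursion is effective. Verifying this nondegeneracy is where the weight-$1$ structural identities $y_{(0)}y=0$, $x_{(0)}y+y_{(0)}x=0$, and $x_{(1)}y\in\C\1$, together with finite-dimensionality of $V_1$, play essential roles.
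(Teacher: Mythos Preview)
Your plan has the right flavor but two concrete gaps prevent it from closing. First, when you apply Lemma~\ref{aiuewy} to $(x_{(-1)}y_{(-n)}\1)_{(-1)}u$ and pass to $R(T^2(V)^{S_2})$ via the Poisson relation $\overline{\eta}(a)\cdot\overline{\eta}(b)=\overline{\eta}(a_{(-1)}b)+\overline{\phi}(a,b)$, the left-hand side becomes $(-1)^{n-1}\overline{\eta}(x_{(-n)}y)\cdot\overline{\eta}(u)$, a \emph{product} in the Poisson algebra, not a linear combination of elements $\overline{\eta}(z_{(-k)}w)$. Nothing in your scheme forces this product to vanish, so you never obtain a genuine linear recursion. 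Second, the term $x_{(-1)}y_{(-n)}u$ on the right of Lemma~\ref{aiuewy} is a three-fold product; it is not ``of the form $x_{(-k)}u$, $y_{(-k)}u$, or $y_{(-k)}(x_{(0)}u)$'' as you claim, and $\overline{\eta}$ of it does not reduce to two-fold terms by any identity you have available. (Your choice $m=1$ also blocks the direct use of Lemma~\ref{lemma4-1}(1), which needs $n\geq 2$.)

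The paper resolves both issues by a bootstrap that your plan lacks. One first takes $a=b=x$ with $\langle x,x\rangle\neq 0$ and indices $m,n\geq 2$: then Lemma~\ref{lemma4-1}(1) turns the $\overline{\phi}$-term into a second copy of the same three-fold vector, giving $\overline{\eta}(x_{(-m)}x_{(-n)}\1)\cdot\overline{\eta}(x_{(-p)}x)=2\,\overline{\eta}(x_{(-m)}x_{(-n)}x_{(-p)}x)+h_{m,n,p}\,\overline{\eta}(x_{(-m-n-p)}x)$. Choosing two triples $(m,n,p)$ with the same multiset but one side forced to vanish (e.g.\ $\overline{\eta}(x_{(-2)}x)=0$) yields two equations with identical three-fold term and distinct coefficients $h$, whence $\overline{\eta}(x_{(-m)}x)=0$ for $m\geq 8$. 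Only \emph{after} this diagonal case does the general $D(x,y)$ follow: now $\overline{\eta}(x_{(-p)}x_{(-q)}\1)=0$ for $p+q\geq 9$, so the product side vanishes automatically, and a second permutation-of-indices/determinant argument (Lemmas~\ref{wiwe}--\ref{hdsba}) eliminates the remaining three-fold term. The nondegeneracy you correctly flag as ``the delicate point'' is precisely this nonvanishing $2\times 2$ determinant, which the paper checks explicitly; your proposal never reaches a stage where such a determinant can even be written down.
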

The proof will be given after Lemma \ref{hdsba}. 

For a simple vertex operator algebra $V$ of CFT type, it is well known that $V_1$ has a Lie algebra structure $[\cdot,\cdot]$ with invariant symmetric bilinear form $\langle\,\cdot\,,\cdot\, \rangle$ as follows:
\begin{align*}
[x,y]:=x_{(0)}y,\quad x_{(1)}y=\langle x,y\rangle\1
\end{align*}
for $x,y\in V_1$. 
The bilinear form $\langle\,\cdot\,,\cdot\,\rangle$ is nondegenerate because the radical of $\langle\,\cdot\,,\cdot\, \rangle$ in $V_1$ generates an ideal of $V$ which must be zero by the simplicity of $V$.
\begin{lemma}\label{sdhfw}
For $x\in V_1$ with $\langle x,x\rangle\neq 0$, $\overline{\eta}(x_{(-m)}x)=0$ for $m\geq 8$. 
\end{lemma}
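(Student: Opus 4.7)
The plan is to exploit the Heisenberg-like structure of $\eta(x)\in T^2(V)^{S_2}$: by \eqref{asoiru} and the CFT-type hypothesis, $\eta(x)_{(n)}\eta(x) = \eta(x_{(n)}x) = 0$ for $n=0$ and $n\geq 2$, while $\eta(x)_{(1)}\eta(x) = 2\langle x,x\rangle\mathbf{1}$. Since $\langle x,x\rangle \neq 0$, $\eta(x)$ is a rank-one Heisenberg generator in the orbifold; equivalently $\omega_x := \frac{1}{2\langle x,x\rangle}x_{(-1)}x$ is a Virasoro vector of $V$ of central charge $1$.

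First, by direct expansion of $\eta(x)=x\otimes\mathbf{1}+\mathbf{1}\otimes x$ in the tensor product VOA, I would establish the identity
\begin{equation*}
\eta(x)_{(-k)}\eta(v) = \eta(x_{(-k)}v) + \phi\bigl(v,\,x_{(-k)}\mathbf{1}\bigr),\qquad v\in V,\ k\geq 1,
\end{equation*}
whose left-hand side lies in $C_2(T^2(V)^{S_2})$ for $k\geq 2$, while $x_{(-k)}\mathbf{1} = \frac{1}{(k-1)!}L_{-1}^{k-1}x\in L_{-1}V$. Specializing $v=x_{(-\ell)}x$, combining with Lemma \ref{lemma4-1}(1)--(2) and the commutativity formula in $V$ (which, since $x_{(i)}x$ vanishes for $i\neq 1$ and is central at $i=1$, implies $x_{(-m)}x_{(-n)}x = x_{(-n)}x_{(-m)}x$ for $m,n\geq 1$), I would derive for $m,n\geq 2$ the symmetric identity
\begin{equation*}
2\,\overline{\eta}(x_{(-m)}x_{(-n)}x) = (-1)^{n-1}\binom{m+n-2}{n-1}\,\overline{\eta}(x_{(-m-n+1)}x)\cdot\overline{\eta}(x).
\end{equation*}
Comparison of this relation under $m\leftrightarrow n$, using the symmetry $\binom{m+n-2}{m-1}=\binom{m+n-2}{n-1}$, forces $\overline{\eta}(x_{(-M)}x)\cdot\overline{\eta}(x) = 0$ for every even $M\geq 4$.

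To pass from these vanishing products with $\overline{\eta}(x)$ to vanishing of the individual $\overline{\eta}(x_{(-m)}x)$, I would invoke Lemma \ref{thm-Vira} applied to $\omega_x$ in place of $\omega$ (its proof in \cite{Abe10-1} depends only on the Virasoro-algebra structure satisfied by $\omega_x$, so carries over verbatim) to obtain $\overline{\eta}((\omega_x)_{(-p)}\omega_x) = 0$ for all sufficiently large $p$. Expanding $(\omega_x)_{(-p)}\omega_x = \frac{1}{(2\langle x,x\rangle)^2}(x_{(-1)}x)_{(-p)}(x_{(-1)}x)$ via the associativity formula produces a sum of three- and four-fold $x$-products; iterated application of Lemma \ref{lemma4-1}(2) collapses each normal-ordered product $x_{(-a_1)}\cdots x_{(-a_r)}\mathbf{1}$ to a rational multiple of $\overline{\eta}(x_{(-a_1+\cdots+a_r-r+1)}x)$, and the symmetrization identity together with the even-$M$ vanishing eliminates the mixed products. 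The resulting identity, for each sufficiently large $p$, takes the form $\alpha_m\overline{\eta}(x_{(-m)}x) + \sum_{k<m}\beta_{m,k}\overline{\eta}(x_{(-k)}x) = 0$ with explicit rational constants $\alpha_m,\beta_{m,k}$; provided $\alpha_m\neq 0$, the recursion yields $\overline{\eta}(x_{(-m)}x) = 0$ for $m$ in the target range.

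The main obstacle is purely computational: tracking the constants $c_{m,n;i}$ from \eqref{const004} and the binomial coefficients from Lemma \ref{lemma4-1}(2) through the cascade of associativity expansions, and verifying that the leading coefficient $\alpha_m$ is nonzero for every $m\geq 8$. The explicit threshold $m\geq 8$ emerges as the smallest weight at which this invertibility can be established; for $m<8$ the possibly-nonzero $\overline{\eta}(x_{(-m)}x)$ contribute only a finite-dimensional piece to $D(x,x)$, fully compatible with the ultimate goal $\dim D(x,x)<\infty$.
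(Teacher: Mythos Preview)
Your three-fold identity
\[
2\,\overline{\eta}(x_{(-m)}x_{(-n)}x)=(-1)^{n-1}\binom{m+n-2}{n-1}\,\overline{\eta}(x_{(-m-n+1)}x)\cdot\overline{\eta}(x)
\]
is correctly derived, but the conclusion you draw from the $m\leftrightarrow n$ swap is empty. When $m,n$ have opposite parity the sign comparison does force $\overline{\eta}(x_{(-M)}x)\cdot\overline{\eta}(x)=0$ with $M=m+n-1$ \emph{even}; however, $\overline{\eta}(x_{(-M)}x)=0$ for every even $M$ already follows immediately from Lemma~\ref{lemma4-1}(2) (compare the two expressions for $\overline{\eta}(x_{(-M)}x_{(-1)}\1)$). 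So this step yields nothing new, and in any case a vanishing \emph{product} in the Poisson algebra $R(T^2(V)^{S_2})$ would not by itself kill the factor $\overline{\eta}(x_{(-M)}x)$.

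The remainder of the plan also has gaps. First, Lemma~\ref{lemma4-1}(2) treats only two-fold products $\overline{\eta}(a_{(-m)}b_{(-n)}\1)$; it does not iterate to collapse $\overline{\eta}(x_{(-a_1)}\cdots x_{(-a_r)}\1)$ for $r\geq 3$ to a scalar multiple of a single $\overline{\eta}(x_{(-k)}x)$, so the ``cascade'' you describe for expanding $(\omega_x)_{(-p)}\omega_x$ does not reduce as claimed. Second, even granting that the proof of Lemma~\ref{thm-Vira} transfers verbatim to the conformal vector $\omega_x$, it yields $\overline{\eta}((\omega_x)_{(-p)}\omega_x)=0$ only for \emph{sufficiently large} $p$, with no control on the threshold; you cannot extract the explicit bound $m\geq 8$ from that route.

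The paper's proof avoids these issues by working directly with \emph{four}-fold products. Using Lemma~\ref{aiuewy} and Lemma~\ref{lemma4-1}(1) one obtains, for $m,n\geq 2$,
\[
\overline{\eta}(x_{(-m)}x_{(-n)}\1)\cdot\overline{\eta}(x_{(-p)}x)=2\,\overline{\eta}(x_{(-m)}x_{(-n)}x_{(-p)}x)+h_{m,n,p}\,\overline{\eta}(x_{(-m-n-p)}x)
\]
with an explicit constant $h_{m,n,p}$. One then chooses two parameter triples, $(m,3,2)$ and $(3,2,m)$ with $m$ even and $m\geq 4$: for both the left-hand side vanishes (because $\overline{\eta}(x_{(-2)}x)=0$ and $\overline{\eta}(x_{(-3)}x_{(-2)}\1)=0$, again by Lemma~\ref{lemma4-1}(2)), while the four-fold products on the right agree by commutativity of the $x_{(-j)}$. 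Since $h_{m,3,2}\neq h_{3,2,m}$ for all such $m$, subtraction gives $\overline{\eta}(x_{(-m-5)}x)=0$ for every odd $m+5\geq 9$. Combined with the even case this is exactly $m\geq 8$. The missing idea in your proposal is precisely this four-fold comparison with carefully chosen parameters that makes the product side vanish while producing distinct explicit coefficients on the target term.
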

\begin{proof}
We may assume that $\langle x,x\rangle=1$. 
Note that $x_{(i)}x_{(-p)}\1=p\delta_{i,p}\1$ for $i\in\Zpos$ and $p\in\Zplus$. 
Therefore 
\begin{align*}
&(x_{(-m)}x_{(-n)}\1)_{(-1)}x_{(-p)}x\\
&=x_{(-m)}x_{(-n)}x_{(-p)}x+\sum_{i=0}^\infty c_{m,n;i}x_{(-m-n-i)}x_{(i)}x_{(-p)}x\\
&=x_{(-m)}x_{(-n)}x_{(-p)}x+pc_{m,n;p}x_{(-m-n-p)}x+c_{m,n;1}x_{(-m-n-1)}x_{(-p)}\1,
\end{align*}
where the constants $c_{m,n;i}$ are those given in \eqref{const004}. 
Since 
\begin{align*}
\overline{\eta}(x_{(-m-n-1)}x_{(-p)}\1)=(-1)^{p-1}\binom{m+n+p-1}{p-1}\overline{\eta}(x_{(-m-n-p)}x)
\end{align*} 
by Lemma \ref{lemma4-1} (2), 
we have 
\begin{align*}
\overline{\eta}((x_{(-m)}x_{(-n)}\1)_{(-1)}x_{(-p)}x)=\overline{\eta}(x_{(-m)}x_{(-n)}x_{(-p)}x)+h_{m,n,p}\overline{\eta}(x_{(-m-n-p)}x), 
\end{align*}
where we set the coefficient $h_{m,n,p}$ to be the scalar 
\begin{align*}
&pc_{m,n;p}+(-1)^{p-1}c_{m,n;1}\binom{m+n+p-1}{p-1}\\
&=\frac{(m+n+p-1)!}{(m-1)!(n-1)!(p-1)!}\left(\frac{(-1)^{n-1}}{n+p}+\frac{(-1)^{m-1}}{m+p}+\frac{(-1)^{n+p}}{n+1}+\frac{(-1)^{m+p}}{m+1}\right).
\end{align*}
On the other hand, if $m,n\geq 2$ then 
\begin{align*}
&\overline{\eta}(x_{(-m)}x_{(-n)}\1)\cdot\overline{\eta}(x_{(-p)}x)\\
&=\overline{\eta}((x_{(-m)}x_{(-n)}\1)_{(-1)}x_{(-p)}x)+\overline{\phi_2}(x_{(-m)}x_{(-n)}\1,x_{(-p)}x)\\
&=\overline{\eta}((x_{(-m)}x_{(-n)}\1)_{(-1)}x_{(-p)}x)+\overline{\eta}(x_{(-m)}x_{(-n)}x_{(-p)}x) 
\end{align*}
by Lemma \ref{lemma4-1} (1). 
Therefore we have 
\begin{align}
\begin{split}\label{iisud}
&\overline{\eta}(x_{(-m)}x_{(-n)}\1)\cdot\overline{\eta}(x_{(-p)}x)\\
&=2\overline{\eta}(x_{(-m)}x_{(-n)}x_{(-p)}x)+h_{m,n,p}\overline{\eta}(x_{(-m-n-p)}x). 
\end{split}
\end{align}

Now we take $m$ to be an {\it even} integer greater than $3$ and set $n=3,p=2$ in \eqref{iisud}. 
Then $\overline{\eta}(x_{(-2)}x)=0$ and we have 
\begin{align}\label{urieu-1}
\begin{split}
&2\overline{\eta}(x_{(-m)}x_{(-3)}x_{(-2)}x)\\
&=\frac{(m+6)(m+4)(m+3)m(m-3)}{40}\overline{\eta}(x_{(-m-5)}x). 
\end{split}
\end{align}
We set $m=3$, $n=2$ and take $p=m$ to be an even integer greater than $3$ in \eqref{iisud}.
Since $\overline{\eta}(x_{(-3)}x_{(-2)}\1)=0$ we have 
\begin{align}\label{urieu-2}
\begin{split}
&2\overline{\eta}(x_{(-3)}x_{(-2)}x_{(-m)}x)\\
&=-\frac{(m+6)(m+4)(m+1)m(m-1)}{24}\overline{\eta}(x_{(-m-5)}x). 
\end{split}
\end{align}
It is clear that the coefficients of $\overline{\eta}(x_{(-m-5)}x)$ in the right hand sides in \eqref{urieu-1} and \eqref{urieu-2} are distinct for any even integer $m\geq 4$. 
Since the left hand sides in \eqref{urieu-1} and \eqref{urieu-2} coincide, we have $\overline{\eta}(x_{(-m)}x)=0$ for any odd integer $m\geq 9$.
Meanwhile $\overline{\eta}(x_{(-m)}x)=0$ for any even positive integer $m$ by Lemma \ref{lemma4-1} (2). 
Thus $\overline{\eta}(x_{(-m)}x)=0$ for any $ m\geq 8$. 
\end{proof}
By Lemmas \ref{sdhfw} and \ref{lemma4-1} (2), we have $\overline{\eta}(x_{(-m)}x_{(-n)}\1)=0$ if $m+n\geq 9$. 
 
\begin{lemma}\label{wiwe}
Let $x,y \in V_1$ with $\langle x,x\rangle=1$. 
Let $p,q\in\Zpos$ with $p,q\geq 2$ and $p+q\geq 9$.
Then 
\begin{align}\label{uusus}
\overline{\eta}(x_{(-p)}x_{(-q)}y)=-\frac{c_{p,q;0}}{2} \overline{\eta}(x_{(-p-q)}[x,y]).
\end{align}
\end{lemma}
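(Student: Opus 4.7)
The plan is to apply the Poisson-algebra identity
\[
\overline{\eta}(a)\cdot\overline{\eta}(b) = \overline{\eta}(a_{(-1)}b) + \overline{\phi_2}(a,b)
\]
with $a = x_{(-p)}x_{(-q)}\1$ and $b = y$. The hypothesis $p+q \geq 9$, combined with Lemma \ref{lemma4-1}(2) and Lemma \ref{sdhfw}, yields
\[
\overline{\eta}(x_{(-p)}x_{(-q)}\1) = (-1)^{q-1}\binom{p+q-2}{q-1}\overline{\eta}(x_{(-p-q+1)}x) = 0,
\]
so the left side vanishes and we obtain $\overline{\eta}((x_{(-p)}x_{(-q)}\1)_{(-1)}y) = -\overline{\phi_2}(x_{(-p)}x_{(-q)}\1,y)$.

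Next I would expand the first term by \eqref{ppsop}. Because $x,y\in V_1$, weight considerations force $x_{(i)}y = 0$ for $i\geq 2$, while $x_{(0)}y = [x,y]$ and $x_{(1)}y = \langle x,y\rangle\1$. The infinite sum in \eqref{ppsop} therefore collapses to
\[
(x_{(-p)}x_{(-q)}\1)_{(-1)}y = x_{(-p)}x_{(-q)}y + c_{p,q;0}\,x_{(-p-q)}[x,y] + c_{p,q;1}\langle x,y\rangle\,x_{(-p-q-1)}\1.
\]
The final term vanishes after applying $\overline{\eta}$, since $x_{(-p-q-1)}\1 \in L_{-1}V$ and $\eta$ commutes with $L_{-1}$, giving $\eta(L_{-1}V) \subset L_{-1}(T^2(V)^{S_2}) \subset C_2(T^2(V)^{S_2})$ by \eqref{asiduhfsq}. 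Hence
\[
\overline{\eta}((x_{(-p)}x_{(-q)}\1)_{(-1)}y) = \overline{\eta}(x_{(-p)}x_{(-q)}y) + c_{p,q;0}\,\overline{\eta}(x_{(-p-q)}[x,y]).
\]

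The main technical step is the reduction of the right-hand side $\overline{\phi_2}(x_{(-p)}x_{(-q)}\1,y)$. Applying Lemma \ref{lemma4-1}(1) twice, which is legitimate because $p,q \geq 2$, gives $\overline{\phi_2}(x_{(-p)}x_{(-q)}\1,y) = \overline{\phi_2}(\1,x_{(-q)}x_{(-p)}y)$. The direct description $\phi_2(a,b) = a\otimes b + b\otimes a$ (equivalently, formula \eqref{hawhyer} together with $\eta(\1) = 2(\1\otimes\1)$) yields $\phi_2(\1,z) = \eta(z)$, so this equals $\overline{\eta}(x_{(-q)}x_{(-p)}y)$. Finally the commutativity formula gives $[x_{(-p)},x_{(-q)}]$ acting on $V$ as a single term involving $(x_{(1)}x)_{(-p-q-1)} = \1_{(-p-q-1)}$, which annihilates $V$ since $p+q \geq 1$; hence $\overline{\phi_2}(x_{(-p)}x_{(-q)}\1,y) = \overline{\eta}(x_{(-p)}x_{(-q)}y)$. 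Combining the three identities yields
\[
2\,\overline{\eta}(x_{(-p)}x_{(-q)}y) + c_{p,q;0}\,\overline{\eta}(x_{(-p-q)}[x,y]) = 0,
\]
which is the claim. The only real obstacle is the last reduction: the doubling of the $\overline{\eta}(x_{(-p)}x_{(-q)}y)$ contribution it produces is what forces the factor of $1/2$ in the final formula.
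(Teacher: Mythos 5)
Your proof is correct and follows essentially the same route as the paper's: multiply $\overline{\eta}(x_{(-p)}x_{(-q)}\1)$ (which vanishes by Lemma \ref{sdhfw} since $p+q\geq 9$) against $\overline{\eta}(y)$, expand the $(-1)$-product via \eqref{ppsop}, kill the $c_{p,q;1}$ term using $L_{-1}V\subset\Ker\overline{\eta}$, and reduce $\overline{\phi_2}(x_{(-p)}x_{(-q)}\1,y)$ to a second copy of $\overline{\eta}(x_{(-p)}x_{(-q)}y)$ via Lemma \ref{lemma4-1}(1). Your write-up is in fact slightly more explicit than the paper's about why $\overline{\phi_2}(\1,z)=\overline{\eta}(z)$ and why $x_{(-q)}x_{(-p)}y=x_{(-p)}x_{(-q)}y$, but the argument is the same.
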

\begin{proof}
Set $\mu=\langle x,y\rangle$.
By Lemmas \ref{lemma4-1} and \ref{sdhfw}, $\overline{\eta}( x_{(-p)}x_{(-q)}\1)=0$. 
We see that 
\begin{align*}
(x_{(-p)}x_{(-q)}\1)_{(-1)}y=x_{(-p)}x_{(-q)}y+c_{p,q;0} x_{(-p-q)}[x,y]+c_{p,q;1}\mu x_{(-p-q-1)}\1.
\end{align*}
Thus 
\begin{align*}
\overline{\eta}(x_{(-p)}x_{(-q)}\1)\cdot\overline{\eta}(y)&=\overline{\eta}((x_{(-p)}x_{(-q)}\1)_{(-1)}y)+\overline{\phi_2}(x_{(-p)}x_{(-q)}\1,y)\\
&=\overline{\eta}(x_{(-p)}x_{(-q)}y)+\overline{\phi_2}(x_{(-p)}x_{(-q)}\1,y)\\
&\quad +c_{p,q;0} \overline{\eta}(x_{(-p-q)}[x,y])+c_{p,q;1}\mu \overline{\eta}(x_{(-p-q-1)}\1).
\end{align*}
Since $\overline{\phi}(x_{(-p)}x_{(-q)}\1,y)=\overline{\eta}(x_{(-q)}x_{(-p)}y)=\overline{\eta}(x_{(-p)}x_{(-q)}y)$ and $\overline{\eta}(x_{(-p)}x_{(-q)}\1)=\overline{\eta}(x_{(-p-q-1)}\1)=0$, we have \eqref{uusus}. 
\end{proof}

Next we take an integer $r\geq 2$. 
Then we have 
\begin{align*}
(x_{(-p)}x_{(-q)}\1)_{(-1)}x_{(-r)}y&=x_{(-p)}x_{(-q)}x_{(-r)}y+c_{p,q;0}x_{(-p-q)}x_{(-r)}[x,y]\\
&\quad+c_{p,q;1}\mu x_{(-p-q-1)}x_{(-r)}\1+c_{p,q;r}r x_{(-p-q-r)}y
\end{align*}
with $\mu=\langle x,y\rangle$.
Therefore one has
\begin{align*}
\overline{\eta}(x_{(-p)}x_{(-q)}\1)\cdot \overline{\eta}(x_{(-r)}y)&=2\overline{\eta}(x_{(-p)}x_{(-q)}x_{(-r)}y)+c_{p,q;0} \overline{\eta}(x_{(-p-q)}x_{(-r)}[x,y])\\
&\quad+c_{p,q;1}\mu \overline{\eta}(x_{(-p-q-1)}x_{(-r)}\1)+c_{p,q;r}r \overline{\eta}(x_{(-p-q-r)}y)
\end{align*}
for $p,q\geq 2$. 
By using Lemmas \ref{sdhfw} and \ref{wiwe}, we have the following identity. 
\begin{lemma}\label{iaudiq}
Let $x,y \in V_1$ such that $\langle x,x\rangle=1$.
Then for $p,q,r\in\Zpos$ with $p,q,r\geq 2$ and $p+q\geq 9$,  
\begin{align*}
&\overline{\eta}(x_{(-p)}x_{(-q)}x_{(-r)}y)\\
&=\frac{(p+q+r-1)!}{(p-1)!(q-1)!(r-1)!}\left(\beta_{p,q,r}\overline{\eta}(x_{(-p-q-r)}[x,[x,y]])+\gamma_{p,q,r}\overline{\eta}(x_{(-p-q-r)}y)\right),
\end{align*}
where 
\begin{align*}
\beta_{p,q,r}&=\frac{1}{4}\left(\frac{(-1)^{p-1}}{p}+\frac{(-1)^{q-1}}{q}\right)\left(\frac{(-1)^{p+q-1}}{p+q}+\frac{(-1)^{r-1}}{r}\right),\\
\gamma_{p,q,r}&=-\frac{1}{2}\left(\frac{(-1)^{p-1}}{p+r}+\frac{(-1)^{q-1}}{q+r}\right).
\end{align*}
\end{lemma}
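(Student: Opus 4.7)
The plan is to start from the identity displayed immediately before the lemma, namely
\begin{align*}
\overline{\eta}(x_{(-p)}x_{(-q)}\1)\cdot \overline{\eta}(x_{(-r)}y)&=2\overline{\eta}(x_{(-p)}x_{(-q)}x_{(-r)}y)+c_{p,q;0} \overline{\eta}(x_{(-p-q)}x_{(-r)}[x,y])\\
&\quad+c_{p,q;1}\mu \overline{\eta}(x_{(-p-q-1)}x_{(-r)}\1)+c_{p,q;r}r \overline{\eta}(x_{(-p-q-r)}y),
\end{align*}
and solve it for $\overline{\eta}(x_{(-p)}x_{(-q)}x_{(-r)}y)$ by killing the unwanted terms using the hypothesis $p+q\geq 9$ (and hence $p+q+r\geq 11\geq 8$). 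The left-hand side vanishes by Lemma \ref{sdhfw} combined with Lemma \ref{lemma4-1}(2), since $\overline{\eta}(x_{(-p)}x_{(-q)}\1)$ is a scalar multiple of $\overline{\eta}(x_{(-p-q+1)}x)$ with $p+q-1\geq 8$.

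Next I would eliminate the two remaining nuisance terms. The term $\overline{\eta}(x_{(-p-q-1)}x_{(-r)}\1)$ collapses, via Lemma \ref{lemma4-1}(2), to a scalar multiple of $\overline{\eta}(x_{(-p-q-r)}x)$, which is zero by Lemma \ref{sdhfw} since $p+q+r\geq 8$. For the term $\overline{\eta}(x_{(-p-q)}x_{(-r)}[x,y])$, I apply Lemma \ref{wiwe} with $(p,q)$ replaced by $(p+q,r)$ and $y$ replaced by $[x,y]$; the hypotheses $p+q\geq 9$ and $r\geq 2$ make this valid, and the output is $-\tfrac{1}{2}c_{p+q,r;0}\,\overline{\eta}(x_{(-p-q-r)}[x,[x,y]])$. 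After these two substitutions, the identity becomes
\begin{align*}
\overline{\eta}(x_{(-p)}x_{(-q)}x_{(-r)}y)=\tfrac{1}{4}c_{p,q;0}\,c_{p+q,r;0}\,\overline{\eta}(x_{(-p-q-r)}[x,[x,y]])-\tfrac{r}{2}c_{p,q;r}\,\overline{\eta}(x_{(-p-q-r)}y).
\end{align*}

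Finally it remains to verify that the two coefficients match $\tfrac{(p+q+r-1)!}{(p-1)!(q-1)!(r-1)!}\beta_{p,q,r}$ and $\tfrac{(p+q+r-1)!}{(p-1)!(q-1)!(r-1)!}\gamma_{p,q,r}$, respectively. Plugging in the formula \eqref{const004} for $c_{m,n;i}$, the factorial factor $\tfrac{(p+q-1)!}{(p-1)!(q-1)!}$ coming from $c_{p,q;0}$ cancels against the denominator of $c_{p+q,r;0}$, leaving exactly $\tfrac{(p+q+r-1)!}{(p-1)!(q-1)!(r-1)!}$ times the product of the two ``sign/reciprocal'' factors that defines $\beta_{p,q,r}$. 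Similarly, the identity $r\binom{p+q+r-1}{r}=\tfrac{(p+q+r-1)!}{(r-1)!(p+q-1)!}$ converts $\tfrac{r}{2}c_{p,q;r}$ into $-\tfrac{(p+q+r-1)!}{(p-1)!(q-1)!(r-1)!}\gamma_{p,q,r}$.

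There is no real obstacle beyond careful bookkeeping; the only substantive ingredient is the recognition that the hypothesis $p+q\geq 9$ is precisely what is needed to apply Lemma \ref{sdhfw} to three separate expressions (the $(-p,-q)$ pair, the $(-p-q-1,-r)$ pair reducing to the $(-p-q-r)$ singleton, and, after one recursion via Lemma \ref{wiwe}, the $(-p-q,-r)$ pair). The coefficient match is then a direct computation from \eqref{const004}.
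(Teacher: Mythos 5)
Your proposal is correct and follows exactly the route the paper intends: the displayed identity preceding the lemma, with the left-hand side and the $\overline{\eta}(x_{(-p-q-1)}x_{(-r)}\1)$ term killed by Lemmas \ref{sdhfw} and \ref{lemma4-1}(2), the $\overline{\eta}(x_{(-p-q)}x_{(-r)}[x,y])$ term reduced by Lemma \ref{wiwe}, and the coefficients matched via \eqref{const004}. This is precisely the ``direct calculation'' the paper leaves to the reader, and your coefficient bookkeeping checks out.
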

\begin{proof}
Calculate directly. 
\end{proof}
By Lemma \ref{iaudiq}, we have 
\[
(\beta_{p,q,r}-\beta_{p,r,q})\overline{\eta}(x_{(-p-q-r)}[x,[x,y]])+(\gamma_{p,q,r}-\gamma_{p,r,q})\overline{\eta}(x_{(-p-q-r)}y)=0
\]
for integers $p,q,r\geq 2$ subject to $p+q,p+r\geq 9$. 
We consider the determinant
\begin{align}\label{det002}
\begin{vmatrix}
\beta_{p,5,2}-\beta_{p,2,5}&\gamma_{p,5,2}-\gamma_{p,2,5}\\
\beta_{p,4,3}-\beta_{p,3,4}&\gamma_{p,4,3}-\gamma_{p,3,4}
\end{vmatrix}.
\end{align}
Direct calculations show that this determinant is $\frac{(-1)^pf(p)}{210p(p+2)(p+3)(p+4)(p+5)} $ with  
\[
f(p)=630(1+ (-1)^{p-1}) + (308 + 411 (-1)^{p-1}) p +7 (16 + 9 (-1)^{p-1}) p^2 + 28 p^3 + 2 p^4.
\]
It follows from this expression that the determinant \eqref{det002} is nonzero for $p\geq 7$.
This implies that 
\[
\overline{\eta}(x_{(-p-7)}[x,[x,y]])=\overline{\eta}(x_{(-p-7)}y)=0
\] 
for $p\geq 7$. 
Therefore we have the following lemma. 
\begin{lemma}\label{hdsba}
For any $x,y\in V_1$ with $\langle x,x \rangle\neq 0$, $\overline{\eta}(x_{(-m)}y)=0$ for $m\geq 14$. 
\end{lemma}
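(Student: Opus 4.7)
The plan is to exploit a symmetry in $q$ and $r$ in the formula of Lemma \ref{iaudiq} in order to cancel the cubic-in-$x$ contributions. After rescaling we may assume $\langle x, x\rangle = 1$. The key observation is that on $V$ the operators $x_{(-q)}$ and $x_{(-r)}$ commute for all $q, r \geq 1$: indeed $x$ generates a Heisenberg-like subalgebra (since $x_{(0)}x = 0$, $x_{(1)}x = \langle x, x\rangle\1$, and $x_{(i)}x = 0$ for $i \geq 2$), so the commutator formula yields $[x_{(-q)}, x_{(-r)}] = -q\langle x, x\rangle\1_{(-q-r-1)}$, which acts as zero on $V$ whenever $q + r \geq 1$. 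Therefore $x_{(-p)}x_{(-q)}x_{(-r)}y$ is symmetric in $q$ and $r$.

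Next, I would write Lemma \ref{iaudiq} for $(p, q, r)$ and for $(p, r, q)$ and take the difference. The left-hand sides agree by the symmetry above, so we obtain a homogeneous linear relation between $\overline{\eta}(x_{(-p-q-r)}[x,[x,y]])$ and $\overline{\eta}(x_{(-p-q-r)}y)$, with coefficients $\beta_{p,q,r} - \beta_{p,r,q}$ and $\gamma_{p,q,r} - \gamma_{p,r,q}$. Fixing the sum $q + r$ and varying the splitting then produces several such relations constraining the same pair of unknowns.

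The pair $q + r = 7$ with splittings $(q,r) = (5,2)$ and $(4,3)$ is the smallest pair for which the hypothesis $p + \min(q, r) \geq 9$ of Lemma \ref{iaudiq} can still be met (forcing $p \geq 7$). The resulting $2 \times 2$ system has the determinant displayed in (\ref{det002}); a direct computation expresses it as a rational function of $p$ whose numerator is a degree-four polynomial in $p$. The principal obstacle is verifying that this polynomial is nonzero for every integer $p \geq 7$, which I would handle by splitting into the two parities of $p$ and inspecting the resulting expressions. Once this is established, non-vanishing of the determinant forces both $\overline{\eta}(x_{(-p-7)}[x,[x,y]]) = 0$ and $\overline{\eta}(x_{(-p-7)}y) = 0$ for all $p \geq 7$, and hence $\overline{\eta}(x_{(-m)}y) = 0$ for $m \geq 14$. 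The numerical accident that the splittings at $q+r = 7$ already yield a non-vanishing determinant is what produces the clean threshold $14$; a poorer choice would force larger $q + r$ and a weaker bound on $m$.
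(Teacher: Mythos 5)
Your proposal is correct and follows essentially the same route as the paper: antisymmetrize Lemma \ref{iaudiq} in $q$ and $r$ using the Heisenberg-type commutativity of $x_{(-q)}$ and $x_{(-r)}$, take the splittings $(5,2)$ and $(4,3)$ of $q+r=7$ with $p\geq 7$, and show the $2\times 2$ determinant \eqref{det002} (a degree-four numerator over $p$, checked by parity) is nonzero, forcing $\overline{\eta}(x_{(-m)}y)=0$ for $m\geq 14$. This matches the paper's argument, including the explicit form $\frac{(-1)^p f(p)}{210p(p+2)(p+3)(p+4)(p+5)}$ of the determinant.
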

Now we give a proof of Proposition \ref{mainweight1}
\begin{proof}[Proof of Proposition \ref{mainweight1}]
Since $\langle\cdot\,,\cdot \rangle$ is nondegenerate on $V_1$, we can take an orthonormal basis $\mathcal{B}$.
Then for any $x,y\in \mathcal{B}$, $\dim D(x,y)<\infty$ by Lemma \ref{hdsba}. 
Thus we see that $D(x,y)$ is finite dimensional for any $x,y\in V_1$. 
\end{proof}

\begin{corollary}\label{fygvyew}
Let $V$ be a simple vertex operator algebra of CFT type.
Then for any $x^1,\cdots,x^r\in V_1$ and $m_1,\cdots,m_r\in\Zplus$, $\overline{\eta}(x^1_{(-m_1)}\cdots x^{r}_{(-m_r)}\1)=0$ if the sum $\sum_{i=1}^rm_i$ is sufficiently large. 
\end{corollary}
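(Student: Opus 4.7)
The plan is to argue by induction on $r$. For the base case $r=1$, observe that $x^1_{(-m_1)}\1=\tfrac{1}{(m_1-1)!}L_{-1}^{m_1-1}x^1\in L_{-1}V$ whenever $m_1\geq 2$. Since the Virasoro vector of $T^2(V)^{S_2}$ equals $\omega\otimes\1+\1\otimes\omega=\eta(\omega)$, the identity \eqref{asoiru} gives $\eta(L_{-1}a)=\eta(\omega)_{(0)}\eta(a)=L_{-1}\eta(a)$ for every $a\in V$, so $\eta(L_{-1}V)\subset L_{-1}T^2(V)^{S_2}\subset C_2(T^2(V)^{S_2})$ by \eqref{asiduhfsq}. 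Hence $\overline{\eta}(x^1_{(-m_1)}\1)=0$ once $m_1\geq 2$. For $r=2$, Lemma \ref{lemma4-1}(2) rewrites $\overline{\eta}(x^1_{(-m_1)}x^2_{(-m_2)}\1)$ as a scalar multiple of $\overline{\eta}(x^1_{(-m_1+m_2-1)}x^2)$, and Proposition \ref{mainweight1} makes this vanish once $m_1+m_2$ is large enough.

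For the inductive step $r\geq 3$, set $v=x^2_{(-m_2)}\cdots x^r_{(-m_r)}\1$ and use $x^1_{(-m_1)}=\tfrac{1}{(m_1-1)!}(L_{-1}^{m_1-1}x^1)_{(-1)}$ together with \eqref{hawhyer} to write
\[
\overline{\eta}(x^1_{(-m_1)}\cdots x^r_{(-m_r)}\1)=\tfrac{1}{(m_1-1)!}\bigl(\overline{\eta}(L_{-1}^{m_1-1}x^1)\cdot\overline{\eta}(v)-\overline{\phi}(L_{-1}^{m_1-1}x^1,v)\bigr).
\]
The first summand vanishes for $\sum m_i$ large: if $m_1\geq 2$ the factor $\overline{\eta}(L_{-1}^{m_1-1}x^1)=0$ by the $r=1$ case, while if $m_1=1$ then $\overline{\eta}(v)=0$ by the inductive hypothesis applied to the $(r-1)$-fold product $v$ once $\sum_{i\geq 2}m_i$ is large.

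It remains to treat $\overline{\phi}(L_{-1}^{m_1-1}x^1,v)$. The derivation property of $L_{-1}$ on $\phi_2$ (which follows from $L_{-1}$ being a derivation of every $n$-product, together with $\overline{L_{-1}(\cdot)}=0$ in $R(T^2(V)^{S_2})$) converts this to $(-1)^{m_1-1}\overline{\phi}(x^1,L_{-1}^{m_1-1}v)$, and expanding $L_{-1}^{m_1-1}v$ by Leibniz rewrites it as a sum of $(r-1)$-fold products in the $x^i$'s with mode sum $\sum m_i-1$ and each mode still $\geq m_i$. For each term $\overline{\phi}(x^1,x^2_{(-k_2)}\cdots x^r_{(-k_r)}\1)$ with all $k_j\geq 2$, repeated application of Lemma \ref{lemma4-1}(1) combined with the symmetry $\overline{\phi}(a,b)=\overline{\phi}(b,a)$ on $R(T^2(V)^{S_2})$ successively moves each factor $x^j_{(-k_j)}$ into the first slot, ending in $\overline{\phi}(\1,x^r_{(-k_r)}\cdots x^2_{(-k_2)}x^1_{(-1)}\1)=\overline{\eta}(x^r_{(-k_r)}\cdots x^2_{(-k_2)}x^1_{(-1)}\1)$, again an $r$-fold product in the same weight-$1$ vectors. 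Using the commutativity formula to reorder these products back to the original sequence produces only $(r-1)$-fold terms involving commutators $[x^i,x^j]\in V_1$, which vanish by the inductive hypothesis for large total weight (the full bracket closure of $\{x^1,\ldots,x^r\}$ in $V_1$ furnishes the finite set of weight-$1$ vectors to which induction is applied).

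Putting these reductions together yields a linear relation of the form $c\cdot\overline{\eta}(x^1_{(-m_1)}\cdots x^r_{(-m_r)}\1)=(\text{terms vanishing for }\sum m_i\text{ large})$, and once $c\neq0$ one concludes. The main obstacle is precisely to secure $c\neq0$: the naive symmetry gives $c=1-(-1)^{r-1}$, which is nonzero for even $r$ but trivial for odd $r$, so the reduction must be set up (e.g.\ by applying the $L_{-1}$-trick to a different factor, or combining two distinct reductions) to break the symmetry in the offending parity; a secondary bookkeeping issue is to ensure the finitely many iterated commutators of $x^1,\ldots,x^r$ fall within a family to which the inductive hypothesis has already been applied, which is guaranteed since each inductive step produces only finitely many new weight-$1$ vectors.
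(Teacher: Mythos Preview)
Your argument has a genuine gap in the inductive step. After the $L_{-1}$ transfer and the repeated use of Lemma~\ref{lemma4-1}(1), the $r$-fold products you obtain are
\[
\overline{\eta}\bigl(x^r_{(-k_r)}\cdots x^2_{(-k_2)}x^1_{(-1)}\1\bigr),
\]
with $k_j\geq m_j$ and $\sum_{j\geq 2}k_j=\sum_i m_i-1$. These are \emph{not} reorderings of the original vector $x^1_{(-m_1)}\cdots x^r_{(-m_r)}\1$: the mode on $x^1$ has dropped to $-1$ and the remaining modes have shifted. Reordering the operators therefore does not recover $\overline{\eta}(x^1_{(-m_1)}\cdots x^r_{(-m_r)}\1)$ modulo shorter products, and no relation of the form $c\cdot\overline{\eta}(\text{orig})=(\text{vanishing})$ falls out. (If you drop the $L_{-1}$ step entirely and instead compute $\overline{\phi}(x^1_{(-m_1)}\1,v)$ two ways---once as $-\overline{\eta}(\text{orig})$ via Lemma~\ref{lemma4-1}(1) directly, once as $(-1)^{r-1}\overline{\eta}(\text{reversed})$ by swapping and peeling $x^2,\dots,x^r$---you do get a closed relation, but with $c=1+(-1)^{r-1}$, vacuous for even $r$; your proposed fixes for that parity are not actually carried out.) A secondary issue is that your peeling step requires every $k_j\geq 2$, which is not guaranteed when some $m_j=1$.

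The paper sidesteps the parity obstruction by splitting off \emph{two} factors rather than one. With $m_1\geq m_2\geq 2$ and $u=x^3_{(-m_3)}\cdots x^r_{(-m_r)}\1$, it expands $\overline{\eta}(x^1_{(-m_1)}x^2_{(-m_2)}\1)\cdot\overline{\eta}(u)$: Lemma~\ref{aiuewy} handles $(x^1_{(-m_1)}x^2_{(-m_2)}\1)_{(-1)}u$, while two applications of Lemma~\ref{lemma4-1}(1) give $\overline{\phi}(x^1_{(-m_1)}x^2_{(-m_2)}\1,u)=\overline{\eta}(x^2_{(-m_2)}x^1_{(-m_1)}u)$, so the product equals $2\,\overline{\eta}(x^1_{(-m_1)}x^2_{(-m_2)}u)$ plus products of length $<r$ in weight-one vectors. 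The left side vanishes by induction (either the two-fold or the $(r-2)$-fold factor has large total mode), and the coefficient $2$ is nonzero regardless of $r$. The leftover case $m_2=\cdots=m_r=1$ is then reduced to $m_2\geq 2$ by one application of the $L_{-1}$-derivation, not by the more elaborate transfer you attempt.
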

\begin{proof}
We may assume that $m_1\geq\cdots \geq m_r\geq 1$. 
First we consider the case $m_2\geq 2$.
We use induction on $r\geq 2$.
The case $r=2$ is proved by Proposition \ref{mainweight1}.
Let $r\geq 3$ and set $u=x^3_{(-m_3)}\cdots x^{r}_{(-m_r)}\1$. 
We recall the formula
\begin{align}
\begin{split}\label{uesus}
\overline{\eta}(x^1_{(-m_1)}x^2_{(-m_2)}\1)\overline{\eta}(u)&=2\overline{\eta}(x^1_{(-m_1)}x^2_{(-m_2)}u)\\
&\quad+\sum \alpha_{m_1,m_2:i}\overline{\eta}(x^1_{(-m_1-m_2-i)}x^2_{(i)}u)\\
&\quad+\sum \alpha_{m_2,m_1:i}\overline{\eta}(x^2_{(-m_1-m_2-i)}x^1_{(i)}u).
\end{split}
\end{align}
We see that for $i\geq 0$, the vectors $x^1_{(-m_1-m_2-i)}x^2_{(i)}u$ and $x^2_{(-m_1-m_2-i)}x^1_{(i)}u$ are linear combinations of $y^1_{(-n_1)}\cdots y^{s}_{(-n_s)}\1$ with $s<r$, $y^j\in V_1$, $n_j\in\Zplus$ and $\sum_{j=1}^{s}n_j=\sum_{j=1}^{r}m_j$. 
Hence by induction hypothesis 
\[
\overline{\eta}(x^1_{(-m_1-m_2-i)}x^2_{(i)}u)=\overline{\eta}(x^2_{(-m_1-m_2-i)}x^1_{(i)}u)=0
\]
for $i\geq 0$. 
Induction hypothesis also implies that $\overline{\eta}(x^1_{(-m_1)}x^2_{(-m_2)}\1)$ or $\overline{\eta}(u)$ are zero. 
This proves $\overline{\eta}(x^1_{(-m_1)}x^2_{(-m_2)}u)=0$. 
 
In the case $m_2=\cdots=m_r=1$, the formula 
\begin{align*}
L_{-1}x^1_{(-m_1+1)}x^{2}_{(-1)}\cdots x^{r}_{(-1)}\1=&(m_1-1)x^1_{(-m_1)}x^{2}_{(-1)}\cdots  x^{r}_{(-1)}\1\\
&+\sum_{i=2}^{r}x^1_{(-m_1+1)}x^{2}_{(-1)}\cdots x^{i}_{(-2)}\cdots x^{r}_{(-1)}\1
\end{align*}
and the fact that $L_{-1}V\subset\Ker \overline{\eta}$ show that 
\[
(m_1-1)\overline{\eta}(x^1_{(-m_1)}x^{2}_{(-1)}\cdots x^{r}_{(-1)}\1)=-\sum_{i=2}^{r}\overline{\eta}(x^1_{(-m_1+1)}x^{2}_{(-1)}\cdots x^{i}_{(-2)}\cdots x^{r}_{(-1)}\1).
\]
Therefore the proof reduces to the former case. 
\end{proof}

\subsection{Proof of Theorem \ref{premain} for the Virasoro vector and homogeneous vectors}\label{Sect4.3}
This section is devoted to the proof of the following proposition.
The idea is very similar to the one in Section \ref{Sect4.2} but calculations become more complicated. 
\begin{proposition}\label{suywe}
Let $V$ be a simple vertex operator algebra of CFT type.
For any $x\in V$, $D(\w,x)$ is finite dimensional. 
\end{proposition}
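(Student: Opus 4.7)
My plan is to mimic the strategy of Section \ref{Sect4.2}, replacing the weight-one generator by the Virasoro vector $\omega$ and inducting on $k=\wt{x}$. By Lemma \ref{thm-Vira} and Lemma \ref{lemma4-1}(2) there exists $N_0\in\Zplus$ with $\overline{\eta}(\omega_{(-m)}\omega_{(-n)}\1)=0$ whenever $m+n\geq N_0+1$. The base case $k=0$ is immediate because $\omega_{(-n)}\1=\frac{1}{(n-1)!}L_{-1}^{n-1}\omega\in L_{-1}V\subset\Ker\overline{\eta}$ for $n\geq 2$ by \eqref{asiduhfsq}, so $D(\omega,\1)=\C\overline{\eta}(\omega)$.

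For the inductive step, fix $x\in V_k$ and assume $D(\omega,y)<\infty$ for every homogeneous $y$ with $\wt{y}<k$. For $m,n\geq 2$ with $m+n\geq N_0+1$ and any $u\in V$, I combine $\overline{\eta}(\omega_{(-m)}\omega_{(-n)}\1)\cdot\overline{\eta}(u)=0$ with \eqref{ppsop} applied to $a=\omega$, with Lemma \ref{lemma4-1}(1) used twice (to rewrite $\overline{\phi}(\omega_{(-m)}\omega_{(-n)}\1,u)=\overline{\eta}(\omega_{(-n)}\omega_{(-m)}u)$), and with $[\omega_{(-m)},\omega_{(-n)}]=(n-m)\omega_{(-m-n-1)}$, to obtain
\begin{equation*}
2\overline{\eta}(\omega_{(-m)}\omega_{(-n)}u)+(m-n)\overline{\eta}(\omega_{(-m-n-1)}u)+\sum_{i\geq 0}c_{m,n;i}\,\overline{\eta}\bigl(\omega_{(-m-n-i)}\omega_{(i)}u\bigr)=0.
\end{equation*}
Taking $u=x$, the sum truncates at $i=k+1$ since $\omega_{(i)}x=L_{i-1}x$ has weight $k-i+1$; the $i\geq 2$ contributions involve vectors of weight $<k$ and are controlled by the inductive hypothesis; the $i=0$ term collapses to a scalar multiple of $\overline{\eta}(\omega_{(-m-n-1)}x)$ via Lemma \ref{lemma4-1}(2); and the $i=1$ term equals $kc_{m,n;1}\overline{\eta}(\omega_{(-m-n-1)}x)$.

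This one-variable version is not sufficient by itself, since swapping $m\leftrightarrow n$ yields only $0=0$ modulo the induction (recall $c_{m,n;i}=c_{n,m;i}$). The next step is to apply the same identity with $u=\omega_{(-p)}x$ for variable $p\geq 1$, normal-ordering each $\omega_{(i)}\omega_{(-p)}x$ through the Virasoro bracket $[\omega_{(i)},\omega_{(-p)}]=(i+p)\omega_{(i-p-1)}+\frac{c}{12}i(i-1)(i-2)\delta_{i,p+2}$. After using the inductive hypothesis (and a parallel application of the one-variable identity to vectors of weight $<k$) to control the terms $\overline{\eta}(\omega_{(-M)}\omega_{(-p)}L_{i-1}x)$ with $i\geq 2$, one obtains a family of linear relations among the two species of unknowns $\overline{\eta}(\omega_{(-N)}x)$ and $\overline{\eta}(\omega_{(-M)}\omega_{(-p)}x)$, with coefficients that are explicit rational functions of $m,n,p,k,c$.

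The final step is to vary $(m,n,p)$ and extract a square subsystem whose determinant is a nonzero polynomial in the parameters. Once this determinant is shown not to be identically zero, it is nonzero for all but finitely many parameter values, and the system forces $\overline{\eta}(\omega_{(-N)}x)=0$ for every $N$ larger than some threshold, completing the induction. The main obstacle is exactly this nonvanishing: the polynomials involved are algebraically too large to inspect by hand, which is why (as the introduction announces) the author uses Mathematica to produce their explicit forms in the Appendix and then invokes only their nonzeroness in the body of the proof.
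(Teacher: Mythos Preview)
Your proposal is correct and follows essentially the same approach as the paper: induction on $k=\wt{x}$, the one-variable identity (the paper's \eqref{iqyrr}), the two-variable identity obtained from $u=L_{-q}x$ leading to a $2\times 2$ linear system (the paper's \eqref{eqn07347}, with unknowns $\overline{\eta}(L_{-m-9}x)$ and $\overline{\eta}(L_{-m-7}L_{-2}x)$), and a Mathematica verification that the determinant is a nonzero polynomial in $m$. The only difference is organizational: the paper handles $k=1$ separately in Lemma \ref{lemma0001} (using the weight-one structure from Section \ref{Sect4.2}) and then runs the inductive machinery only for $k\geq 2$, whereas you start the induction at $k=0$ and proceed uniformly---which also works, since the determinant polynomials $f_0,f_1$ in the Appendix have leading coefficient proportional to $k$ and are therefore nonzero for every $k\geq 1$.
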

We use induction on the weight $\wt{x}$ of $x$.
The proof of Proposition \ref{suywe} consists of three lemmas.

\begin{lemma} \label{lemma0001}
For $x\in V_1$, $\dim D(\w,x)<\infty$. 
\end{lemma}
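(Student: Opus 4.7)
My plan is to mimic the strategy of Section \ref{Sect4.2}, leveraging Lemma \ref{thm-Vira} (that $D(\omega,\omega)$ is finite dimensional) together with the Poisson product on $R(T^2(V)^{S_2})$. By Lemma \ref{thm-Vira} there exists $N_0$ with $\overline{\eta}(\omega_{(-k)}\omega)=0$ for all $k\ge N_0$, and then Lemma \ref{lemma4-1}(2) gives $\overline{\eta}(\omega_{(-m)}\omega_{(-n)}\1)=0$ as soon as $m+n$ is sufficiently large. For $m,n\ge 2$ with $m+n$ beyond this bound, I would apply the Poisson identity
\[
\overline{\eta}(\omega_{(-m)}\omega_{(-n)}\1)\cdot\overline{\eta}(x)=\overline{\eta}\bigl((\omega_{(-m)}\omega_{(-n)}\1)_{(-1)}x\bigr)+\overline{\phi_2}(\omega_{(-m)}\omega_{(-n)}\1,x),
\]
whose left-hand side vanishes. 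I would expand the right-hand side via \eqref{ppsop}: the key simplification is that for $x\in V_1$ the only nonzero $\omega_{(i)}x$ with $i\ge 0$ are $\omega_{(0)}x=L_{-1}x$, $\omega_{(1)}x=x$, and $\omega_{(2)}x=L_1 x\in\C\1$, so only three terms survive. Using $\overline{\eta}(L_{-1}V)=0$, the fact that $\omega_{(-m-n-2)}\1=L_{-m-n-3}\1\in L_{-1}V$ for $m+n\ge 0$, and the Virasoro commutator $[L_{-1},\omega_{(k)}]=-k\omega_{(k-1)}$ (to rewrite $\omega_{(-m-n)}L_{-1}x$), I expect to obtain
\[
\overline{\eta}\bigl((\omega_{(-m)}\omega_{(-n)}\1)_{(-1)}x\bigr)=\overline{\eta}(\omega_{(-m)}\omega_{(-n)}x)+\bigl[c_{m,n;1}-(m+n)c_{m,n;0}\bigr]\overline{\eta}(\omega_{(-m-n-1)}x).
\]
The $\overline{\phi_2}$ term is rewritten by applying Lemma \ref{lemma4-1}(1) twice (valid since $m,n\ge 2$) together with the readily verified identity $\phi_2(\1,z)=\eta(z)$, giving $\overline{\phi_2}(\omega_{(-m)}\omega_{(-n)}\1,x)=\overline{\eta}(\omega_{(-n)}\omega_{(-m)}x)$. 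Finally, the Virasoro bracket $[\omega_{(-m)},\omega_{(-n)}]=(n-m)\omega_{(-m-n-1)}$ collapses the two reordered $\omega\omega x$ contributions into a single scalar identity
\[
2\,\overline{\eta}(\omega_{(-m)}\omega_{(-n)}x)\;=\;\beta_{m,n}\,\overline{\eta}(\omega_{(-m-n-1)}x)
\]
for an explicit rational coefficient $\beta_{m,n}$.

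To pass from this single relation to the desired conclusion $\overline{\eta}(\omega_{(-k)}x)=0$ for $k$ large, I would play this relation against itself for two different choices of indices with the same sum, for instance $(m,n)=(m,2)$ versus $(m-1,3)$, then use the Virasoro commutator one more time to express both vectors $\omega_{(-m_i)}\omega_{(-n_i)}x$ in a common normal-ordered form. This yields a $2\times 2$ linear system whose unknowns are $\overline{\eta}(\omega_{(-m+1)}x)$ and one reordered quantity; its coefficient determinant is an explicit rational function of $m$, and the remaining technical work is to check that this determinant is a nonzero polynomial in $m$, so that for all $m$ beyond a computable bound both unknowns must vanish. The principal obstacle is precisely this final nondegeneracy check: it is analogous in spirit to the determinant computation at the end of Section \ref{Sect4.2} leading to Lemma \ref{hdsba}, but considerably lighter, because the condition $x\in V_1$ truncates the sum in \eqref{ppsop} after only three terms and keeps the underlying polynomial of low degree.
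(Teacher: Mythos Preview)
Your derivation up to the relation
\[
2\,\overline{\eta}(\omega_{(-m)}\omega_{(-n)}x)=\beta_{m,n}\,\overline{\eta}(\omega_{(-m-n-1)}x),\qquad \beta_{m,n}=(n-m)+(m+n)c_{m,n;0}-c_{m,n;1},
\]
is correct; this is precisely identity \eqref{iqyrr} of the paper, written in the $\omega_{(-\cdot)}$ indexing and specialized to weight $k=1$. The gap is in the last step: no nondegenerate $2\times 2$ system can be extracted from this single two-$\omega$ relation by index games. Since $c_{m,n;i}$ is symmetric in $m,n$, one has $\beta_{m,n}-\beta_{n,m}=2(n-m)$, which exactly matches the commutator contribution $[\omega_{(-m)},\omega_{(-n)}]=(n-m)\omega_{(-m-n-1)}$; swapping $m\leftrightarrow n$ therefore yields a tautology. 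If instead you take two decompositions with the same sum, say $(m,2)$ and $(m-1,3)$, the left-hand sides $\overline{\eta}(\omega_{(-m)}\omega_{(-2)}x)$ and $\overline{\eta}(\omega_{(-m+1)}\omega_{(-3)}x)$ are \emph{independent} unknowns that no Virasoro commutator links, so you have two equations in three unknowns and no constraint on $\overline{\eta}(\omega_{(-m-3)}x)$. There is no ``common normal-ordered form'' for these two vectors.

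The paper takes a genuinely different route: it multiplies $x_{(-m)}x_{(-2)}\1$ (not $\omega_{(-m)}\omega_{(-n)}\1$) against $x_{(-p)}\omega$, killing the side terms via Corollary~\ref{fygvyew} from Section~\ref{Sect4.2}. The crucial point is that for $x\in V_1$ with $\langle x,x\rangle\neq 0$ the operators $x_{(-m)}$ \emph{commute} pairwise (because $x_{(0)}x=0$ and $x_{(1)}x\in\C\1$), so swapping $m\leftrightarrow p$ leaves $\overline{\eta}(x_{(-m)}x_{(-2)}x_{(-p)}\omega)$ unchanged while the coefficient on the right jumps from $pc_{m,2;p}$ to $mc_{p,2;m}$; these differ for $m\neq p$, forcing $\overline{\eta}(x_{(-m-p-2)}\omega)=0$. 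Your Virasoro-only route \emph{can} be salvaged, but only by going one level deeper: multiply $\overline{\eta}(\omega_{(-m)}\omega_{(-n)}\1)$ against $\overline{\eta}(\omega_{(-p)}x)$ and run the full three-$L$ computation exactly as in Lemma~\ref{lemma0003} (the base case $k=1$ of Assumption~{\bf (A)} being trivial since $V_0=\C\1$). That is where the genuine $2\times 2$ system, with the extra unknown $\overline{\eta}(L_{-M+2}L_{-2}x)$ and a nonzero determinant, actually appears --- and it is substantially more work than your sketch suggests.
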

\begin{proof}
Since $\langle\,\cdot\,,\cdot\, \rangle$ is nondegenerate on $V_1$, we may assume that $\langle x,x\rangle=1$. 
We note that $\wt{x_{(i)}\w}=2-i$ for $i\geq 0$. 
Thus we have $x_{(i)}\w=0$ for $i\geq 3$. 
Set $x_{(2)}\w=\lambda\1$ for some $\lambda\in\C$. 
The skew-symmetry formula \eqref{skew-sym} proves 
\[
x_{(1)}\w=\w_{(1)}x=x,\quad\text{and}\quad x_{(0)}\w=-\w_{(0)}x+L_{-1}\w_{(1)}x=0.
\]
Therefore, for $m,p\in\Zplus$, we have 
\begin{align*}
&(x_{(-m)}x_{(-2)}\1)_{(-1)}x_{(-p)}\w\\
&=x_{(-m)}x_{(-2)}x_{(-p)}\w+\sum_{i=0}^\infty c_{m,2;i}x_{(-m-2-i)}x_{(i)}x_{(-p)}\w\\
&=x_{(-m)}x_{(-2)}x_{(-p)}\w+pc_{m,2;p}x_{(-m-2-p)}\w+\sum_{i=0}^\infty c_{m,2;i}x_{(-m-2-i)}x_{(-p)}x_{(i)}\w\\
&=x_{(-m)}x_{(-2)}x_{(-p)}\w+pc_{m,2;p}x_{(-m-2-p)}\w\\
&\quad +c_{m,2;1}x_{(-m-3)}x_{(-p)}x+\lambda c_{m,2;2}x_{(-m-4)}x_{(-p)}\1.
\end{align*}
By Corollary \ref{fygvyew}, there exists $N_{0}\in\Zplus$ such that 
\[
\overline{\eta}(x_{(-m-3)}x_{(-p)}x)=\overline{\eta}(x_{(-m-4)}x_{(-p)}\1)=0
\] 
for any $m\geq N_{0}$.
Thus we have 
\begin{align}
\begin{split}\label{uoerur}
&\overline{\eta}((x_{(-m)}x_{(-2)}\1)_{(-1)}x_{(-p)}\w)\\
&=\overline{\eta}(x_{(-m)}x_{(-2)}x_{(-p)}\w)+pc_{m,2;p}\overline{\eta}(x_{(-m-2-p)}\w)
\end{split}
\end{align}
for $m\geq N_0$. 
On the other hand, for $m\geq N_0$, we have
\begin{align*}
0&=\overline{\eta}(x_{(-m)}x_{(-2)}\1)\overline{\eta}(x_{(-p)}\w)\\
&=\overline{\eta}((x_{(-m)}x_{(-2)}\1)_{(-1)}x_{(-p)}\w)+\overline{\eta}(x_{(-m)}x_{(-2)}x_{(-p)}\w).
\end{align*}
Consequently, by \eqref{uoerur}, for $m\geq N_{0}$ we have 
\begin{align*}
2\overline{\eta}(x_{(-m)}x_{(-2)}x_{(-p)}\w)=-pc_{m,2;p}\overline{\eta}(x_{(-m-2-p)}\w).
\end{align*} 

By replacing $m$ with $p$, we have, 
\begin{align*}
2\overline{\eta}(x_{(-m)}x_{(-2)}x_{(-p)}\w)=-mc_{p,2;m}\overline{\eta}(x_{(-m-2-p)}\w)
\end{align*}
for $p\geq N_{0}$. 
Since the number 
\begin{align*}
&pc_{m,2;p}-mc_{p,2,m}\\
&=\frac{(m+p+1)!}{(m-1)!(p-1)!}\left(-\frac{1}{p+2}+\frac{(-1)^{m-1}}{m+p}+\frac{1}{m+2}-\frac{(-1)^{p-1}}{m+p}\right)
\end{align*}
is nonzero if $p\neq m$, we see that $\overline{\eta}(x_{(-m-2-p)}\w)=0$ for integers $m,p\geq N_{0}$ with $m\neq p$.
This proves Lemma \ref{lemma0001}. 
\end{proof}

Let $k\geq 2$ and assume that the following assumption is true. 
\begin{center}
 Assumption {\bf (A)} : For any $y\in \bigoplus_{i=0}^{k-1}V_i$, $\dim D(\w,y)$ is finite. 
\end{center}
Then there exists $N_1\in\Zplus$ such that $\overline{\eta}(L_{-m}\w)=\overline{\eta}(L_{-m}y)=0$ for any $y\in V$ with $\wt{y}<k$ and $m\geq N_{1}$ since $\bigoplus_{i=0}^{k-1}V_i$ is finite dimensional. 

\begin{lemma}\label{lemma0002}
Let $m_1,\cdots,m_r\in\Zplus$ and $y\in \bigoplus_{i=0}^{k-1}V_i$. 
Under Assumption {\bf (A)}, if $m_i\geq N_1$ for some $i$, then $\overline{\eta}(L_{-m_1}\cdots L_{-m_r}y)=0$ .
\end{lemma}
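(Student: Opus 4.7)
The plan is to prove Lemma \ref{lemma0002} by induction on $r$.  The base case $r=1$ is exactly the defining property of $N_1$ recalled just before the lemma.  For the inductive step, I assume the conclusion for all products of strictly fewer than $r$ Virasoro operators acting on vectors of weight less than $k$.  The first step is \emph{free reordering}:  the Virasoro bracket $[L_{-a},L_{-b}]=(b-a)L_{-a-b}$ carries no central term for positive $a,b$, so any transposition of two adjacent $L_{-m_i}$'s in $L_{-m_1}\cdots L_{-m_r}y$ introduces a single error term with one fewer Virasoro factor, in which two indices $m_i,m_j$ are replaced by their sum.  Iterating arbitrary permutations, every error term is a product $L_{-n_1}\cdots L_{-n_s}y$ with $s<r$ and each $n_\ell$ a sum of a subset of $\{m_1,\ldots,m_r\}$; since the part containing the large $m_i\geq N_1$ forces some $n_\ell\geq N_1$, the induction hypothesis makes $\overline{\eta}$ of every error term vanish.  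Hence we may assume $m_1\geq N_1$.

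The core is a \emph{master identity} obtained from $\overline{\eta}(A)\cdot\overline{\eta}(u)=\overline{\eta}(A_{(-1)}u)+\overline{\phi_2}(A,u)$ with $u=L_{-m_3}\cdots L_{-m_r}y$.  Enlarging $N_1$ if necessary so that Lemma \ref{thm-Vira} yields $\overline{\eta}(L_{-m}\w)=0$ for all $m\geq N_1-2$, I take $A=L_{-m_1}L_{-m_2}\1$ when $m_2\geq 3$, or $A=L_{-m_1}\w$ when $m_2=2$; the degenerate case $m_2=1$ is settled purely by Virasoro commutation via $L_{-m_1}L_{-1}=L_{-1}L_{-m_1}-(m_1-1)L_{-m_1-1}$, together with $L_{-1}V\subset C_2(V)$ and the induction hypothesis.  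Lemma \ref{lemma4-1}(2) makes $\overline{\eta}(A)$ vanish in either case, so $\overline{\eta}(A_{(-1)}u)=-\overline{\phi_2}(A,u)$.  Expanding $A_{(-1)}u$ via formula \eqref{ppsop} produces $L_{-m_1}L_{-m_2}u$ plus a finite sum of terms of the form $L_{-N_i}L_{i-1}u$ with $N_i\geq N_1$ and $i\geq 0$.  Simplifying $\overline{\phi_2}(A,u)$ through iterated use of Lemma \ref{lemma4-1}(1) and $\overline{\phi_2}(\1,w)=\overline{\eta}(w)$ yields $\overline{\eta}(L_{-m_2}L_{-m_1}u)$ (plus, in the case $m_2=2$, an auxiliary term $-\overline{\eta}(\w)\cdot\overline{\eta}(L_{-m_1}u)$ that also vanishes by the induction hypothesis).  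Combining with the Virasoro commutator $[L_{-m_1},L_{-m_2}]=(m_2-m_1)L_{-m_1-m_2}$ gives a relation of the shape
\[
2\,\overline{\eta}(L_{-m_1}L_{-m_2}u)=(m_2-m_1)\,\overline{\eta}(L_{-m_1-m_2}u)-\sum_{i\geq 0}c_i\,\overline{\eta}(L_{-N_i}L_{i-1}u),
\]
with $c_i\in\C$ and each $N_i\geq N_1$.

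For the right-hand side, $\overline{\eta}(L_{-m_1-m_2}u)$ has $r-1$ Virasoro factors with outer index $\geq N_1$, so it vanishes by the induction hypothesis.  For each $\overline{\eta}(L_{-N_i}L_{i-1}u)$ I commute $L_{i-1}$ inward through $L_{-m_3},\ldots,L_{-m_r}$ until it acts directly on $y$:  for $i\geq 1$, $L_{i-1}y\in V_{\wt{y}-i+1}\subset\bigoplus_{j<k}V_j$, so the main term has $r-1$ Virasoro factors acting on a vector of weight $<k$ with outer index $\geq N_1$ and vanishes by induction; for $i=0$, $L_{-1}y$ is extracted by further commuting $L_{-1}$ outward to the outermost position, exposing $L_{-1}(\cdot)\in C_2(T^2(V)^{S_2})$ via $L_{-1}V\subset C_2(V)$.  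The commutator corrections generated at each commutation step are shorter products still carrying an outer index $\geq N_1$, hence zero by induction.  The master identity therefore collapses to $\overline{\eta}(L_{-m_1}\cdots L_{-m_r}y)=0$, closing the induction.

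The main obstacle is the final collapsing step:  one must verify, with tight bookkeeping, that every summand of the master identity reduces either to a strictly shorter product (handled by the induction on $r$) or, through outward extraction of $L_{-1}$, to an element of $C_2(T^2(V)^{S_2})$.  The separate treatment of $m_2\in\{1,2\}$ is forced by the degeneracy of \eqref{ppsop}:  when $m_2=2$ the vector $L_{-m_1}L_{-2}\1=L_{-m_1}\w$ requires the modified choice of $A$ and a slightly different simplification of $\overline{\phi_2}(A,u)$, and when $m_2=1$ one has $L_{-1}\1=0$ so the associativity formula collapses entirely and the argument reduces to pure Virasoro commutation combined with $L_{-1}V\subset C_2(V)$.
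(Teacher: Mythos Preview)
Your proof is correct and follows essentially the same route as the paper: induction on $r$, free reordering via the Virasoro commutators to place the large index first, the identity $\overline{\eta}(L_{-m_1}L_{-m_2}\1)\cdot\overline{\eta}(u)=\overline{\eta}((L_{-m_1}L_{-m_2}\1)_{(-1)}u)+\overline{\phi_2}(L_{-m_1}L_{-m_2}\1,u)$ expanded through \eqref{ppsop}, and simplification of $\overline{\phi_2}$ via Lemma~\ref{lemma4-1}(1) to reach the relation $2\,\overline{\eta}(L_{-m_1}L_{-m_2}u)=\cdots$, all of whose remaining terms die by the inductive hypothesis. Your explicit treatment of $m_2=1$ (where $L_{-m_2}\1=0$ so \eqref{ppsop} is vacuous) patches a case the paper passes over, while the separate handling of $m_2=2$ is harmless but unnecessary, since the paper's single application of Lemma~\ref{lemma4-1}(1) followed by $\overline{\phi_2}(L_{-m_2}\1,w)=\overline{\eta}(L_{-m_2}\1)\cdot\overline{\eta}(w)-\overline{\eta}(L_{-m_2}w)$ already covers all $m_2\geq 2$.
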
 
\begin{proof}
We use induction on $r$. 
The case $r=1$ follows from Assumption {\bf (A)}.
Let $r>1$. 
Induction hypothesis and the commutation relations of the Virasoro algebra imply that 
\[
\overline{\eta}(L_{-m_1}\cdots L_{-m_r}y)=\overline{\eta}(L_{-m_{\sigma(1)}}\cdots L_{-m_{\sigma(r)}}y)
\]
for any $\sigma\in S_r$ if $m_i\geq N_1$ for some $i$. 
Hence we may assume that $m_1\geq N_1$ and $m_2\geq  \cdots\geq m_r$.

By Lemma \ref{jashasu} and \eqref{hawhyer}, we have an identity    
\begin{align}
\begin{split}\label{asfser}
\overline{\eta}(L_{-m_1}L_{-m_2}\1)\overline{\eta}(u)=&\overline{\eta}(L_{-m_1}L_{-m_2}u)+\overline{\phi}(L_{-m_1}L_{-m_2}\1,u)\\
&\quad +\sum_{i=0}^\infty c_{m_1,m_2;i}\overline{\eta}(L_{-m_1-m_2-i+1}L_{i-1}u), 
\end{split}
\end{align} 
where $u=L_{-m_3}\cdots L_{-m_r}y$. 
Then among the terms in the both hand side in Identity \eqref{asfser}, it follows form Assumption {\bf (A)} and induction hypothesis that the left hand side and the terms $\overline{\eta}(L_{-m_1-m_2-i+1}L_{i-1}u)$ for $i\geq 2$ are zero, where we use the fact that for $i\geq 2$, $L_{i-1}u$ is a linear combination of $L_{-n_1}\cdots L_{-n_l}v$ with $n_i\geq 2$, $0\leq l\leq r-2$ and $v\in\bigoplus_{i=0}^{k-1}V_i$.

We also see that  
\begin{align*}
\overline{\eta}(L_{-m_1-m_2+1}L_{-1}u)&=(-m_1-m_2)\overline{\eta}(L_{-m_1-m_2}u)=0,\\
\overline{\eta}(L_{-m_1-m_2}L_{0}u)&=\wt{u}\overline{\eta}(L_{-m_1-m_2}u)=0. 
\end{align*}
Therefore we have 
\begin{align}
\begin{split}
&\overline{\eta}(L_{-m_1}L_{-m_2}u)\\
&=-\overline{\phi}(L_{-m_1}L_{-m_2}\1,u)\\
&=\overline{\phi}(L_{-m_2}\1,L_{-m_1}u)\\
&=\overline{\eta}(L_{-m_2}\1)\overline{\eta}(L_{-m_1}u)-\overline{\eta}(L_{-m_2}L_{-m_1}u)\\
&=\overline{\eta}(L_{-m_2}\1)\overline{\eta}(L_{-m_1}u)-\overline{\eta}(L_{-m_1}L_{-m_2}u)-(m_1-m_2)\overline{\eta}(L_{-m_1-m_2}u).
\end{split}
\end{align}
This gives  
\begin{align}
2\overline{\eta}(L_{-m_1}L_{-m_2}u)=\overline{\eta}(L_{-m_2}\1)\overline{\eta}(L_{-m_1}u)-(m_1-m_2)\overline{\eta}(L_{-m_1-m_2}u).
\end{align}
Again induction hypothesis shows that $\overline{\eta}(L_{-m_1}u)=\overline{\eta}(L_{-m_1-m_2}u)=0$. 
Consequently we have $\overline{\eta}(L_{-m_1}L_{-m_2}u)=0$.
\end{proof}

Finally we shall show the following lemma. 
\begin{lemma}\label{lemma0003}
 Under Assumption {\bf (A)}, for any $x\in V_k$, there exists $N_2\in\Zplus$ such that $\overline{\eta}(L_{-m}x)=0$ for any $m\geq N_2$.
\end{lemma}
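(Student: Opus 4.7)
The plan is to adapt the argument of Lemma \ref{sdhfw}: use Lemma \ref{aiuewy} with $a=b=\w$ to produce a family of identities that, after exploiting asymmetry in the parameters, forces $\overline{\eta}(L_{-M}x)=0$ for $M$ large. Two extra complications relative to Section \ref{Sect4.2} are that $\w_{(i)}\w\neq 0$ for $i=0,1,3$ (with $\w_{(3)}\w=(c/2)\1$ introducing a central-charge contribution), and that $x$ need not be of weight one, so the descendants $L_{j}x$ must be tracked.

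First I would expand, for integers $p,q,r\geq 2$, the vector
$
(\w_{(-p)}\w_{(-q)}\1)_{(-1)}\w_{(-r)}x
$
via Lemma \ref{aiuewy}. Writing $\w_{(i)}\w_{(-r)}x = [\w_{(i)},\w_{(-r)}]x+\w_{(-r)}\w_{(i)}x$ and using the values of $\w_{(j)}\w$ together with $(L_{-1}a)_{(n)}=-na_{(n-1)}$ recasts every summand as an $L$-polynomial applied to $x$ or to $L_{i-1}x$. Terms with $i\geq 2$ contain $L_{i-1}x\in\bigoplus_{j<k}V_j$ and are killed by Assumption (A) (equivalently, Lemma \ref{lemma0002}) once $p+q+r$ is large. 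Terms with $L_{-1}$ acting as the rightmost operator vanish because $\overline{\eta}(L_{-1}V)=0$. What remains, after using $\overline{\eta}(\w_{(-p)}\w_{(-q)}\1)\cdot\overline{\eta}(\w_{(-r)}x)=0$ (Lemma \ref{thm-Vira}) and reducing $\overline{\phi_2}(\w_{(-p)}\w_{(-q)}\1,\w_{(-r)}x)$ via Lemma \ref{lemma4-1}(1), Lemma \ref{thm-Vira}, and Assumption (A), is a relation
\[
\alpha(p,q,r)\,\overline{\eta}(L_{-p-1}L_{-q-1}L_{-r-1}x)\;=\;\beta(p,q,r)\,\overline{\eta}(L_{-(p+q+r+3)}x),
\]
with $\alpha,\beta$ explicit rational expressions in $p,q,r,k,c$.

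Next, I would permute $(p,q,r)$ and compare. The trilinear LHS changes only through Virasoro commutators, each contributing further scalar multiples of $\overline{\eta}(L_{-(p+q+r+3)}x)$, while $\beta$ itself is not permutation-symmetric. Subtracting two such identities (in the spirit of the numerical triple choices in \eqref{urieu-1}--\eqref{urieu-2}) cancels the trilinear term and yields
\[
\Pi(p,q,r)\,\overline{\eta}(L_{-(p+q+r+3)}x)=0
\]
for an explicit rational function $\Pi$.

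The main obstacle is verifying $\Pi\not\equiv 0$. Structurally $\Pi$ is a sign-alternating sum of products of the $c_{m,n;i}$ constants, Virasoro commutator constants, and the central-charge contribution from $\w_{(3)}\w$; without the explicit computation it is not obvious that these terms cannot conspire to produce an identical zero. As signalled in the introduction, nonvanishing is checked via Mathematica and the explicit polynomial is deferred to the Appendix. Once $\Pi\not\equiv 0$ is established, only finitely many integer triples satisfy $\Pi(p,q,r)=0$; fixing two of the parameters (say $p=3,q=2$) and letting the third grow then gives $\overline{\eta}(L_{-(r+8)}x)=0$ for all sufficiently large $r$, providing the required $N_2$.
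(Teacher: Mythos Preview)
Your overall strategy---feed $(\w_{(-p)}\w_{(-q)}\1)_{(-1)}$ into $\w_{(-r)}x$, kill the $\overline{\eta}(\w_{(-p)}\w_{(-q)}\1)$ factor via Lemma~\ref{thm-Vira}, and exploit asymmetry under permutation of the parameters---is exactly the route the paper takes. But the reduction you describe is too optimistic at one crucial point, and the gap changes the structure of the endgame.

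When you expand $\w_{(i)}\w_{(-r)}x=L_{i-1}L_{-r-1}x$ and commute, you correctly note that $L_{-r-1}L_{i-1}x$ is handled by Assumption~{\bf (A)} via Lemma~\ref{lemma0002}. But the commutator piece $(i+r)L_{i-r-2}x$ produces, for $2\le i\le r$, terms $\overline{\eta}(L_{-p-q-i-1}L_{i-r-2}x)$ with second index running through $-r,\dots,-2$. None of these are covered by Assumption~{\bf (A)} (the rightmost vector has weight $>k$) or by Lemma~\ref{thm-Vira}. The paper deals with this by first proving a preliminary identity (their \eqref{iqyrr}): for $m,n\ge 3$ with $m+n$ large,
\[
2\,\overline{\eta}(L_{-m}L_{-n}x)=F_{m,n;k}\,\overline{\eta}(L_{-m-n}x),
\]
obtained from $\overline{\eta}(L_{-m}L_{-n}\1)\cdot\overline{\eta}(x)=0$. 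This collapses all the double-$L$ terms with second index $\le -3$ into scalar multiples of $\overline{\eta}(L_{-(p+q+r+3)}x)$. Your write-up skips this step entirely.

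Even after \eqref{iqyrr}, one term survives: the case where the second index is exactly $-2$, giving $\overline{\eta}(L_{-M+2}L_{-2}x)$. Identity \eqref{iqyrr} does not apply here (it needs both indices $\ge 3$), and $L_{-2}x$ has weight $k+2$, so Assumption~{\bf (A)} is useless. Thus the relation you actually obtain after permuting $(p,q)$ and subtracting is not $\Pi\cdot\overline{\eta}(L_{-M}x)=0$ but a \emph{two-term} relation
\[
\gamma_1\,\overline{\eta}(L_{-M}x)+\gamma_2\,\overline{\eta}(L_{-M+2}L_{-2}x)=0.
\]
Fixing two parameters and varying the third, as you propose, still leaves two unknowns. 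The paper instead fixes $m$ large and takes two specific pairs $(p,q)=(6,3)$ and $(5,4)$, obtaining a $2\times 2$ linear system; the Mathematica computation in the Appendix shows the determinant is a nonzero polynomial in $m$ (two polynomials, depending on parity), whence both unknowns vanish for $m$ large. Your single-equation plan cannot close the argument without accounting for the $L_{-2}$ remainder.
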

\begin{proof}
Let $x\in V_k$ and fix integers $m,n\geq 3$ with $m+n> N_1$. 
Then we have 
\begin{align*}
\overline{\eta}(L_{-m}L_{-n}\1)\overline{\eta}(x)=&2\overline{\eta}(L_{-m}L_{-n}x)+(m-n)\overline{\eta}(L_{-m-n}x)\\
&+\sum_{i=0}^\infty c_{m-1,n-1;i}\overline{\eta}(L_{-m-n+1-i}L_{i-1}x).
\end{align*}
By Assumption {\bf (A)}, $\overline{\eta}(L_{-m-n+1-i}L_{i-1}x)=0$ for $i\geq 2$ since $\wt{L_{i-1}x}<k$. 
We notice that $\overline{\eta}(L_{-m}L_{-n}\1)=0$ by the choice of $N_1$ and Lemma \ref{lemma4-1}. 
Therefore we have 
\begin{align*}
&2\overline{\eta}(L_{-m}L_{-n}x)+(m-n)\overline{\eta}(L_{-m-n}x)\\
&+c_{m-1,n-1;0}\overline{\eta}(L_{-m-n+1}L_{-1}x)+c_{m-1,n-1;1}\overline{\eta}(L_{-m-n}L_{0}x)=0.
\end{align*}

Consequently, by using the identities 
\begin{align*}
\overline{\eta}(L_{-m-n+1}L_{-1}x)&=-(m+n-2)\overline{\eta}(L_{-m-n}x),\\
\overline{\eta}(L_{-m-n}L_{0}x)&=k\overline{\eta}(L_{-m-n}x),
\end{align*}
we can get an identity    
\begin{align}
\begin{split}\label{iqyrr}
&2\overline{\eta}(L_{-m}L_{-n}x)=F_{m,n;k}\overline{\eta}(L_{-m-n}x)
\end{split}
\end{align}
for $x\in V_k$ and $m,n\geq 3$ with $m+n\geq N_1$, where $F_{m,n;k}$ is a constant defined by  
\begin{align*}
F_{m,n;k}&=-m+n+(m+n-2)c_{m-1,n-1;0}-kc_{m-1,n-1;1}.
\end{align*}

We next take $m\geq N_1$ and $p,q\geq 3$.
Then we have 
\begin{align}
\begin{split}\label{eqn06573}
\overline{\eta}(L_{-m}L_{-p}\1)\cdot \overline{\eta}(L_{-q}x)&=2\overline{\eta}(L_{-m}L_{-p}L_{-q}x)+(m-p)\overline{\eta}(L_{-m-p}L_{-q}x)\\
&\quad+\sum_{i=0}^\infty c_{m-1,p-1;i}\overline{\eta}(L_{-m-p-i+1}L_{i-1}L_{-q}x).
\end{split}\end{align}
In this case, $\overline{\eta}(L_{-m}L_{-p}\1)=0$ and $\overline{\eta}(L_{-m-p-i+1}L_{i-1}L_{-q}x)=0$ for $i\geq q+2$ since $\wt{L_{i-1}L_{-q}x}<k$.  

Since $m+p>N_1$, by \eqref{iqyrr}, we have 
\begin{align*}
\overline{\eta}(L_{-m-p}L_{-q}x)=\frac{1}{2}F_{m+p,q;k}\overline{\eta}(L_{-m-p-q}x),
\end{align*}
and 
\begin{align}
\begin{split}\label{eqn848-2}
&(m-p)\overline{\eta}(L_{-m-p}L_{-q}x)+\sum_{i=0}^1 c_{m-1,p-1;i}\overline{\eta}(L_{-m-p-i+1}L_{i-1}L_{-q}x)\\
&=(m-p-(m+p-2)c_{m-1,p-1;0}+(k+q)c_{m-1,p-1;1})\overline{\eta}(L_{-m-p}L_{-q}x)\\
&=-F_{m,p,k+q}\overline{\eta}(L_{-m-p}L_{-q}x)\\
&=-\frac{1}{2}F_{m,p,k+q}F_{m+p,q,k}\overline{\eta}(L_{-m-p-q}x).
\end{split}
\end{align}

For $2\leq i\leq q+1$ we have 
\begin{align*}
&\overline{\eta}(L_{-m-p-i+1}L_{i-1}L_{-q}x)\\
&=\overline{\eta}(L_{-m-p-i+1}L_{-q}L_{i-1}x)+(q+i-1)\overline{\eta}(L_{-m-p-i+1}L_{-q-1+i}x)\\
&\quad +\delta_{i,q+1}\frac{q^3-q}{12}c_V\overline{\eta}(L_{-m-p-q}x).
\end{align*}
By Lemma \ref{lemma0002}, we have $\overline{\eta}(L_{-m-p-i+1}L_{-q}L_{i-1}x)=0$ since $m+p-1\geq N_1$ and $\wt{L_{i-1}x}<k$. 
Moreover if $2\leq i\leq q-2$, then $m+p+q\geq N_1$ and $m+p+i-1,q+1-i\geq 3$.
Thus \eqref{iqyrr} gives 
\begin{align}
\label{eqn848-3}
\overline{\eta}(L_{-m-p-i+1}L_{-q-1+i}x)=\frac{1}{2}F_{m+p+i-1,q+1-i;k}\overline{\eta}(L_{-m-p-q}x)
\end{align}
for $2\leq i\leq q-2$.
As for the cases $i=q,q+1$, we have 
\begin{align}
\overline{\eta}(L_{-m-p-q+1}L_{-1}x)&=(-m-p-q+2)\overline{\eta}(L_{-m-p-q}x),\label{eqn848-4}\\
\overline{\eta}(L_{-m-p-q}L_{0}x)&=k\overline{\eta}(L_{-m-p-q}x). \label{eqn848-5}
\end{align}  
Therefore by \eqref{eqn06573}--\eqref{eqn848-5}, we see that  
\begin{align*}
0&=2\overline{\eta}(L_{-m}L_{-p}L_{-q}x)\\
&\quad+\left(-\frac{1}{2}F_{m,p,k+q}F_{m+p,q,k}+\sum_{i=2}^{q-2} c_{m-1,p-1;i}(q+i-1)\frac{1}{2}F_{m+p+i-1,q+1-i;k}\right.\\
&\quad-(m+p+q-2) c_{m-1,p-1;q}(2q-1)\\
&\quad+\left.\left(2q k+\frac{q^3-q}{12}c_V\right)c_{m-1,p-1;q+1}\right) \overline{\eta}(L_{-m-n-p}x)\\
&\quad+c_{m-1,p-1;q-1}(2q-2)\overline{\eta}(L_{-m-p-q+2}L_{-2}x).
\end{align*}
This leads an identity  
\begin{align}\label{eqn022-1}
2\overline{\eta}(L_{-m}L_{-p}L_{-q}x)=g_{m,p,q;k}\overline{\eta}(L_{-m-p-q}x)+h_{m,p,q}\overline{\eta}(L_{-m-p-q+2}L_{-2}x)
\end{align}
for $m\geq N_1$, $p,q\geq 3$, where we set   
\begin{align*}
g_{m,p,q;k}&:=\frac{1}{2}F_{m,p;k+q}F_{m+p,q;k}-\frac{1}{2}\sum_{i=2}^{q-2} c_{m-1,p-1;i}(q+i-1)F_{m+p+i-1,q+1-i;k}\\
&\quad+(m+p+q-2) c_{m-1,p-1;q}(2q-1)\\
&\quad-\left(2k+\frac{(q^2-1)}{12}c_V\right)qc_{m-1,p-1;q+1},\\
h_{m,p,q}&:=-2(q-1)c_{m-1,p-1;q-1}
\end{align*}

By replacing $p$ and $q$ with $q$ and $p$, respectively, we have 
\begin{align}\label{eqn022-2}
2\overline{\eta}(L_{-m}L_{-q}L_{-p}x)=g_{m,q,p;k}\overline{\eta}(L_{-m-p-q}x)+h_{m,q,p}\overline{\eta}(L_{-m-p-q+2}L_{-2}x)
\end{align}
for $m\geq N_1$, $p,q\geq 3$. 
Thus we have 
\begin{align*}
&(g_{m,p,q;k}-g_{m,q,p;k})\overline{\eta}(L_{-m-p-q}x)+(h_{m,p,q}-h_{m,q,p})\overline{\eta}(L_{-m-n-p+2}L_{-2}x)\\
&=2\overline{\eta}(L_{-m}L_{-p}L_{-q}x-L_{-m}L_{-q}L_{-p}x),\\
&=2(q-p)\overline{\eta}(L_{-m}L_{-p-q}x)\\
&=(q-p)F_{m,p+q;k}\overline{\eta}(L_{-m-p-q}x).
\end{align*}
Therefore we have 
\begin{align}
\begin{split}\label{eqn07347}
&(g_{m,p,q;k}-g_{m,q,p;k}+(p-q)F_{m,p+q;k})\overline{\eta}(L_{-m-n-p}x)\\
&+(h_{m,p,q}-h_{m,q,p})\overline{\eta}(L_{-m-n-p+2}L_{-2}x)=0
\end{split}
\end{align}
for $m\geq N_1$ and $p,q\geq 3$. 

Now we take $(p,q)=(6,3)$ and $(5,4)$ and set 
\begin{alignat*}{4}
\gamma_{1,1}(m)&=g_{m,6,3;k}-g_{m,3,6;k}+3F_{m,9;k},\quad &\gamma_{1,2}(m)&=h_{m,6,3}-h_{m,3,6},\\
\gamma_{2,1}(m)&=g_{m,5,4;k}-g_{m,4,5;k}+F_{m,9;k},\quad &\gamma_{2,2}(m)&=h_{m,5,4}-h_{m,4,5}
\end{alignat*}
for $m\geq N_1$. 
Then it follows two identities 
\begin{align}
\gamma_{1,1}(m)\overline{\eta}(L_{-m-9}x)+\gamma_{1,2}(m)\overline{\eta}(L_{-m-7}L_{-2}x)=0,\\
\gamma_{2,1}(m)\overline{\eta}(L_{-m-9}x)+\gamma_{2,2}(m)\overline{\eta}(L_{-m-7}L_{-2}x)=0.
\end{align} 
Direct calculations by means of Mathematica give two nontrivial polynomials $f_0(z)$ and $f_{1}(z)$ both whose leading terms are $\frac{16}{952560}kz^{16}$ such that 
\begin{align}
\begin{vmatrix} \gamma_{1,1}(m)&\gamma_{1,2}(m)\\
\gamma_{2,1}(m)&\gamma_{2,2}(m)
\end{vmatrix}=
\begin{cases}
f_0(m)&\text{ if $m$ is even},\\
f_1(m)&\text{ if $m$ is odd}.
\end{cases}
\end{align}
The explicit forms of $f_0(z)$ and $f_1(z)$ are displayed in Section \ref{appendix}.

Since the polynomials $f_{0}(z)$ and $f_{1}(z)$ are nontrivial, the number of roots of them are finite.
Therefore the determinant is not zero for sufficiently large $m$. 
This shows that there exists $N_2\geq N_1$ such that $\overline{\eta}(L_{-m}x)=0$ for $m\geq N_2$. 
The proof of Lemma \ref{lemma0003} is completed. 
\end{proof}
\begin{proof}[Proof of Proposition \ref{suywe}]
We use induction on the weight of $x$. 
The case $\wt{x}=1$ is prove in Lemma \ref{lemma0001}.
Let $\wt{x}=k\geq 2$ and assume that Assumption {\bf (A)} holds.
Then Lemma \ref{lemma0003} shows that $\dim D(\w,x)<\infty$.
This proves Propostion \ref{suywe}. 
\end{proof}

\subsection{Proof of the main theorem}\label{Sect4.4}
We shall finish the proof of Theorem \ref{premain} in this section. 
First we show the following lemma.
\begin{lemma}\label{hojo004}
Let $V$ be a vertex operator algebra of CFT type, and $W$ a graded subspace of $V$ satisfying the following conditions. 
\begin{enumerate}
  \item[{\rm(1)}] $L_{-1}V\subset W$.
  \item[{\rm(2)}] $a_{(0)}W\subset W$ for any $a\in V$. 
  \item[{\rm(3)}] For any $u\in V$, $L_{-m}u\in W$ for sufficiently large $m$. 
  \item[{\rm(4)}] There exists $N_1\in\Zplus$ such that $u_{(-m)}v\in W$ for any $m\geq N_1$ and $u,v\in V_1$.
\end{enumerate}
Then for any $x,y\in V$, there exists $N\in\Zplus$ such that $x_{(-m)}y\in W$ for any $m\geq N$. 
\end{lemma}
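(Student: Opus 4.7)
Denote by $T(x,y)$ the conclusion that there exists $N\in\Zplus$ with $x_{(-m)}y\in W$ for all $m\geq N$. The plan is an induction on $\wt x+\wt y$, bootstrapped by three preliminary reductions drawn at once from the hypotheses. First, condition (1) gives $x_{(-m)}\1=\frac{1}{(m-1)!}L_{-1}^{m-1}x\in L_{-1}V\subset W$ for $m\geq 2$, so $T(x,\1)$ always holds; dually $\1_{(-m)}y=0$ for $m\geq 2$ gives $T(\1,y)$. Second, skew-symmetry \eqref{skew-sym} yields $x_{(-m)}y=(-1)^{m+1}y_{(-m)}x+\sum_{i\geq 1}(\cdots)(y_{(-m+i)}x)_{(-1-i)}\1$; each $i\geq 1$ term equals a constant multiple of $L_{-1}^i(\cdots)\in L_{-1}V\subset W$, so $T(x,y)\iff T(y,x)$. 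Third, since $L_{-1}$ is a derivation, $mx_{(-m-1)}y+x_{(-m)}L_{-1}y=L_{-1}(x_{(-m)}y)\in L_{-1}V\subset W$, giving $T(x,y)\iff T(x,L_{-1}y)$ (with $N$ shifted by one).

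With these in hand, I would induct on $\wt x+\wt y$, invoking the second reduction to assume $\wt x\leq\wt y$. The base cases $\wt x+\wt y\leq 2$ are covered by the first reduction and condition (4) (for $\wt x=\wt y=1$), and the case $x=\w$ is exactly condition (3) since $\w_{(-m+1)}y=L_{-m}y\in W$ for $m$ large. For the inductive step with $\wt x+\wt y=k\geq 3$ and $(x,y)$ not a base case, I would decompose $y$ (using the third reduction to dispose of any $L_{-1}V$-component) as $y=a_{(-n)}b$ with $\wt a+\wt b+n-1=\wt y$, and apply the commutator formula
\begin{align*}
x_{(-m)}(a_{(-n)}b)=a_{(-n)}(x_{(-m)}b)+\sum_{i\geq 0}(-1)^i\binom{m+i-1}{i}(x_{(i)}a)_{(-m-n-i)}b.
\end{align*}
The (finitely many) sum terms satisfy $\wt{x_{(i)}a}+\wt b=\wt x+\wt y-n-i<\wt x+\wt y$, so the inductive hypothesis places each of them in $W$ for $m$ large.

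The main obstacle is the leading term $a_{(-n)}(x_{(-m)}b)$: although $x_{(-m)}b\in W$ for $m$ large by induction when $\wt a\geq 1$, the hypotheses do not give $a_{(-n)}W\subset W$ for $n\geq 1$. To navigate this I would apply the second reduction once more to rewrite $a_{(-n)}(x_{(-m)}b)\equiv (-1)^{n+1}(x_{(-m)}b)_{(-n)}a\pmod W$ and then expand $(x_{(-m)}b)_{(-n)}a$ via associativity, producing terms of the form $x_{(-m-i)}(b_{(-n+i)}a)$ and $b_{(-m-n-i)}(x_{(i)}a)$ for $i\geq 0$. Most reduce in total weight by the same count as before, and iterating with the third reduction and condition (2) (which lets one pass zero-mode actions through $W$) allows the recursion to terminate in cases covered by conditions (3), (4), or earlier induction. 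The delicate point is the potentially resonant contribution $x_{(-m)}(b_{(-n)}a)\equiv (-1)^{n+1}x_{(-m)}y$ (via skew-symmetry on $b_{(-n)}a$), which yields a self-referential equation; this must be resolved by choosing $n$ so that the resulting coefficient of $x_{(-m)}y$ is not $1$, permitting an algebraic solution for $x_{(-m)}y$ modulo lower-weight corrections. Finally, primary vectors $y$ not expressible as $a_{(-n)}b$ with $\wt a,\wt b<\wt y$ are handled by applying skew-symmetry to swap $y$ into the first-argument position and reducing via the $x=\w$ base case after subtracting an $L_{-1}V$-component using the third reduction.
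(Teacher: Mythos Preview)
Your preliminary reductions are fine, and the induction framework is reasonable, but the core step has a genuine gap. Trace the coefficient in your resonance carefully: from
\[
x_{(-m)}y \equiv a_{(-n)}(x_{(-m)}b)\equiv (-1)^{n+1}(x_{(-m)}b)_{(-n)}a \pmod W,
\]
expand $(x_{(-m)}b)_{(-n)}a$ by associativity. Every term with $i\geq 1$, and the $i=0$ piece $b_{(-m-n)}(x_{(0)}a)$, has strictly smaller total weight and falls to induction; what remains is exactly $x_{(-m)}(b_{(-n)}a)$. Applying skew-symmetry to $b_{(-n)}a$ (and your third reduction to dispose of the $L_{-1}$-tail, which again drops weight) returns $(-1)^{n+1}x_{(-m)}y$. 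The net coefficient is therefore $(-1)^{n+1}\cdot(-1)^{n+1}=1$ \emph{for every} $n$, so the equation collapses to $x_{(-m)}y\equiv x_{(-m)}y$. There is no choice of $n$ that makes the coefficient different from $1$; the sign factors from the two skew-symmetries always cancel. Your proposed resolution of the resonance does not work, and the argument stalls here. The handling of primary $y$ is also incomplete: swapping $y$ to the left slot does not produce $\omega$ in either slot, so the $x=\omega$ base case is not directly available.

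The paper's proof sidesteps the resonance entirely by commuting a different pair of operators. Rather than $[x_{(-m)},a_{(-n)}]$, it computes $[\omega_{(-m+1)},x_{(0)}]y = L_{-m}(x_{(0)}y)-x_{(0)}(L_{-m}y)$. Condition (3) puts $L_{-m}(x_{(0)}y)$ and $L_{-m}y$ in $W$ for large $m$, and condition (2) then puts $x_{(0)}(L_{-m}y)$ in $W$; this is the only place condition (2) is used, and it is essential. The commutator equals $\sum_{i\geq 0}\binom{-m+1}{i}(L_{i-1}x)_{(-m+1-i)}y$; the $i=0,1$ terms combine to $(\wt x-1)(1-m)\,x_{(-m)}y$, and the $i\geq 2$ terms have $\wt{L_{i-1}x}<\wt x$ and fall to induction on $p=\max\{\wt x,\wt y\}$ (with a short two-step argument to cover $\wt x=\wt y$). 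Since $\wt x\geq 2$ in the inductive range, the coefficient $(\wt x-1)(1-m)$ is nonzero and one solves for $x_{(-m)}y$. The point you are missing is that putting the \emph{negative} mode on $\omega$ and the \emph{zero} mode on $x$ lets conditions (2) and (3) absorb both leading terms, producing a non-resonant equation; your decomposition $y=a_{(-n)}b$ never engages condition (2) in this way.
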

\begin{proof}
Let $x,y\in V$ be homogeneous vectors. 
By using induction on $p=\max\{\wt{x},\wt{y}\}$, we shall show that $x_{(-m)}y\in W$ for sufficiently large $m$. 
If one of $\wt{x},\wt{y}$ is $0$, since $V$ is of CFT type, we have $x_{(-m)}y\in L_{-1}V\subset W$ for $m\geq 2$ by Condition (1).

We assume that $\wt{x},\wt{y}\geq 1$. 
In the case $p=1$, we have $\wt{x}= \wt{y}=1$ and hence we can take $N=N_1$ by Condition (4). 

Let $p\geq 2$ and assume that $u_{(-m)}v\in W$ for sufficiently large $m$ and homogeneous vectors $u,v\in V$ with $\wt{u},\wt{v}<p$.
Since $\bigoplus_{i=0}^{p-1}V_i$ is finite dimensional we can take $N_2\in\Zplus$ so that $u_{(-m)}v\in W$ for any $u,v$ with $\wt{u},\wt{v}<p$ and $m\geq N_2$. 
By Condition (1) and the skew-symmetry formula \eqref{skew-sym}, we have $x_{(-m)}y\equiv(-1)^{m-1}y_{(-m)}x$ modulo $W$ for any $m\in\Z$. 
Thus we may assume that $p=\wt{x}\geq \wt{y}$. 

We also note that there exists $N_3\in\Zplus$ such that $L_{-m}y,L_{-m}x_{(0)}y\in W$ for $m\geq N_3$ by Condition (3).  
By Condition (1), we see that $x_{(0)}L_{-m}y\in W$ for $m\geq N_3$. 
Therefore the commutativity formula implies that 
\begin{align*}
L_{-m}x_{(0)}y-x_{(0)}L_{-m}y&=(p-1)(-m+1)x_{(-m)}y\\
&\quad+\sum_{i=1}^\infty\binom{-m+1}{i+1}(L_{i}x)_{(-m-i)}y\in W
\end{align*}
for any $m\geq N_3$. 
Since $p\neq 1$, we have 
\begin{align}
\label{uerjudf}
x_{(-m)}y\equiv \frac{1}{(p-1)(m-1)}\sum_{i=1}^\infty\binom{-m+1}{i+1}(L_{i}x)_{(-m-i)}y\mod  W
\end{align}
for $m\geq N_3$. 

In the case $\wt{x}>\wt{y}$, set $N=\max \{ N_2,N_3\}$ and let $m\geq N$.  
Since $\wt{L_{i}x},\wt{y}<p$ for $i\geq 1$, it follows from induction hypothesis  that $(L_{i}x)_{(-m-i)}y\in W$ for any $i\geq 1$.
Therefore by \eqref{uerjudf}, we have $x_{(-m)}y\in W$ for $m\geq N$.  

In the case $\wt{x}=\wt{y}=p$, set $N'=\max\{ N_2,N_3\}$. 
Since $\wt{L_{i}x}<\wt{y}=p$ for $i\geq 1$, by the argument in the previous paragraph, there is $N''\in\Zplus$ such that $(L_{i-1}x)_{(-m+1-i)}y\in W$ for $i\geq 2$ and $m\geq N''$. 
Therefore if we set $N=\max\{N',N''\}$, then it follows from \eqref{uerjudf} that $x_{(-m)}y\in W$ for $m\geq N$. 
The proof is completed. 
\end{proof}
We now give a proof of Theorem \ref{premain}. 
\begin{proof}[Proof of Theorem \ref{premain}]
It suffices to show that $W=\Ker \overline{\eta}$ satisfies Conditions (1)--(4) in Lemma \ref{hojo004}.
Since ${\eta}(L_{-1}u)=L_{-1}\eta(u)\in C_2(T^2(V)^{S_2})$ for $u\in V$, $W$ satisfies Condition (1). 
For $a\in V$ and $u\in W$,
\begin{align*}
\overline{\eta}(a_{(0)}u)=[\overline{\eta}(a),\overline{\eta}(u)]=0
\end{align*} 
by \eqref{asoiru}. 
Hence $a_{(0)}u\in W$, and we see that $W$ satisfies Condition (2).
Propositions \ref{mainweight1} and \ref{suywe} imply that $W$ satisfies Condition (3) and Condition (4), respectively.  
\end{proof}

\section{$C_2$-cofiniteness of a $\Z_2$-orbifold models of lattice vertex operator algebras}\label{Sect5}
Let $L$ be a positive definite even lattice of rank one and $V_L$ the lattice vertex operator algebra associated to the lattice $L$(see \cite{FLM} for the definition). 
It is known that $V_L$ is simple, of CFT type and $C_2$-cofinite (see \cite{Dong93} and \cite{DongLiMason02}).
We also have an involution $\theta$ of $V_L$ which is lifted form the $-1$-isometry of $L$ (\cite{FLM}).  

We consider the $\Z_2$-orbifold model $V_L^+(=(V_L)^{\langle\theta\rangle})$.  
It has been proved in \cite{Yamskulna04} that $V_L^+$ is $C_2$-cofinite when $L$ is of rank one (see also \cite{ABD} for general rank). 
In this section, we give an alternative proof of the $C_2$-cofiniteness of $V_L^+$ for a rank one lattice $L$ by means of our result for $2$-cyclic permutation orbifold models.  

We recall some general facts.
\begin{proposition}\label{oasuhqe}
(1) Let $U, V$ be vertex operator algebras.
Then both $U$ and $V$ are $C_2$-cofinite if and only if the tensor product $U\otimes V$ is $C_2$-cofinite.

(2) Let $V$ be a vertex operator algebra of CFT type and $U$ a vertex operator subalgebra whose Virasoro vector admits with that of $V$. 
If $U$ is $C_2$-cofinite then $V$ is also $C_2$-cofinite.  
\end{proposition}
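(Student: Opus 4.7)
I plan to establish the stronger identity
\begin{align*}
C_2(U\otimes V)=C_2(U)\otimes V+U\otimes C_2(V),
\end{align*}
which yields a natural isomorphism $R(U\otimes V)\cong R(U)\otimes R(V)$; since both factors contain $\overline{\1}\neq 0$, finite-dimensionality of the tensor product is equivalent to finite-dimensionality of each factor, giving both directions of the equivalence. The inclusion ``$\supseteq$'' is immediate from $(a\otimes\1)_{(-2)}(c\otimes d)=(a_{(-2)}c)\otimes d$ and its mirror. For ``$\subseteq$'', the vertex operator on $U\otimes V$ gives
\begin{align*}
(a\otimes b)_{(-2)}(c\otimes d)=\sum_{m+l=-3}(a_{(m)}c)\otimes(b_{(l)}d);
\end{align*}
since $m+l=-3$ forces $m\leq -2$ or $l\leq -2$, and since $a_{(-k)}c\in C_2(U)$ for every $k\geq 2$ (by induction starting from $a_{(-2)}c\in C_2(U)$ and using $(L_{-1}x)_{(n)}=-n\,x_{(n-1)}$), each summand lies in $C_2(U)\otimes V$ or $U\otimes C_2(V)$.

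\textbf{Part (2).} The plan is to compare $C_2(V)$ with the smaller subspace
\begin{align*}
C_2^{U}(V):=\haru{a_{(-2)}v}{a\in U,\,v\in V}\subseteq C_2(V),
\end{align*}
and to show directly that $V/C_2^{U}(V)$ is finite-dimensional; the inclusion then immediately forces $\dim V/C_2(V)<\infty$. Because the Virasoro vectors of $U$ and $V$ agree, the embedding $U\hookrightarrow V$ realises $V$ as an ordinary $U$-module: the grading $V=\bigoplus_{n\geq 0}V_n$ is bounded below with finite-dimensional homogeneous components (by the VOA axioms and CFT type) and coincides with the $L_0$-grading inherited from $U$.

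I will then invoke two standard consequences of the $C_2$-cofiniteness of $U$: (i) the category of ordinary $U$-modules has finite length---in particular $V$ admits a finite Jordan--H\"older series as a $U$-module; and (ii) every irreducible ordinary $U$-module $M$ satisfies $\dim M/C_2^{U}(M)<\infty$, which follows from Buhl's spanning theorem applied to its finite-dimensional top level. A short extension lemma---if $0\to A\to N\to B\to 0$ is an exact sequence of $U$-modules with $A$ and $B$ both $C_2^U$-cofinite, then $N/C_2^U(N)$ surjects onto $B/C_2^U(B)$ with kernel a quotient of $A/C_2^U(A)$, hence $N$ is $C_2^U$-cofinite---combined with (ii) and induction on composition length applied via (i), yields $\dim V/C_2^{U}(V)<\infty$.

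The main obstacle is ensuring that the module-theoretic inputs (i) and (ii) can be invoked in the required generality: (i) requires the finite-length property of the category of ordinary modules over a $C_2$-cofinite VOA, and (ii) requires Buhl's spanning theorem for irreducible ordinary modules. Crucially, neither input uses semisimplicity of $V$ as a $U$-module, so the approach avoids the flaw of the earlier attempt, which tacitly required $V$ to decompose as a finite direct sum of irreducible $U$-modules; all that is needed is a finite Jordan--H\"older filtration, which is automatic.
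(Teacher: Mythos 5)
Your part (1) is correct and is essentially the paper's argument: the paper simply cites the isomorphism $R(U\otimes V)\cong R(U)\otimes R(V)$, which is exactly the identity $C_2(U\otimes V)=C_2(U)\otimes V+U\otimes C_2(V)$ that you prove. Your mode computation $(a\otimes b)_{(-2)}(c\otimes d)=\sum_{m+l=-3}(a_{(m)}c)\otimes(b_{(l)}d)$ and the standard fact $a_{(-k)}c\in C_2(U)$ for all $k\geq 2$ (via the $L_{-1}$-derivative property) are both right, so you have just filled in the details the paper leaves implicit.

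Part (2) is where you genuinely diverge, and your route, while heavier, is sound. The paper's proof is a two-line application of Miyamoto's criterion (\cite[Theorem 2.7]{Miyamoto04}): a vertex operator algebra is $C_2$-cofinite if and only if every weak module over it is a direct sum of generalized $L_0$-eigenspaces. Since every weak $V$-module is a weak $U$-module and $U$, $V$ share the same Virasoro vector (hence the same $L_0$), $C_2$-cofiniteness of $U$ forces the eigenspace decomposition of every weak $V$-module, and the converse direction of the criterion then gives $C_2$-cofiniteness of $V$; no module-category finiteness is needed because everything happens at the level of weak modules. Your proof instead works in the category of ordinary (grading-restricted) $U$-modules: $V$ is such a module, it has a finite Jordan--H\"older series because $U$ is $C_2$-cofinite, each irreducible factor is $C_2^U$-cofinite by Buhl's spanning theorem, and your extension lemma (which is correct: the image of $C_2^U(N)$ in $B$ is $C_2^U(B)$, and the kernel term is a quotient of $A/C_2^U(A)$) propagates cofiniteness up the filtration. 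Two remarks. First, both of your external inputs require $U$ to be of CFT type; you should note that this is automatic here, since $U_n\subseteq V_n=0$ for $n<0$ and $\1\in U_0\subseteq V_0=\C\1$. Second, the finite-length input (i) is the deepest ingredient; it is available in the literature for $C_2$-cofinite CFT-type algebras (it can also be extracted from the finiteness of irreducibles of Abe--Buhl--Dong together with a counting argument on the finite-dimensional graded pieces of $V$), but it is not lighter than the single theorem of Miyamoto the paper invokes. What your approach buys is a formally stronger conclusion --- finite dimensionality of $V/C_2^U(V)$, i.e.\ cofiniteness already relative to the subalgebra $U$ --- and, as you correctly stress, it never assumes rationality of $U$ or semisimplicity of $V$ as a $U$-module, only the existence of a finite composition series.
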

\begin{proof}
The assertion (1) follows from the fact that $R(U\otimes V)\cong R(U)\otimes R(V)$.
The assertion (2) is a corollary of the fact that $V$ is $C_2$-cofinite if and only if any weak $V$-module is a direct sum of generalized $L_0$-eigenspaces (see \cite[Theorem 2.7]{Miyamoto04}).
Let $\w$ be a common Virasoro vector of $U$ and $V$.
We note every weak $V$-module is an weak $U$-module.
Since $U$ is $C_2$-cofinite, any weak $U$-module has a generalized eigenspace decomposition for $L_0=\w_{(1)}$.
Thus so does any weak $V$-module, and hence $V$ is $C_2$-cofinite. 
\end{proof}

Let $k\in\Zplus$ and consider the lattice $L=\Z\alpha$ with $\langle \alpha,\alpha\rangle=2k$.
We shall show that $V_L^+$ is $C_2$-cofinite. 

Let $M=\Z\beta$ be a lattice with $\langle \beta,\beta\rangle=4k$, and set 
\[
\beta_1=(\beta,\beta)\quad\text{and}\quad \beta_2=(\beta,-\beta)\in M\oplus M,
\]
where $M\oplus M$ is an orthogonal direct sum of $M$ and itself. 
Then we see that $\beta_1$ and $\beta_2$ are orthogonal and that $\langle \frac{1}{2}\beta_i,\frac{1}{2}\beta_i\rangle=2k$ for $i=1,2$. 
Therefore we have $\Z\frac{1}{2}\beta_1\cong\Z\frac{1}{2}\beta_2\cong L$ as lattices. 

Now we have a natural isomorphism 
\[
V_M\otimes V_M\cong V_{M\oplus M}
\]
as vertex operator algebras. 
Under this isomorphism, we see that the $2$-cyclic permutation $\sigma\in S_2$ on $T^2(V_M)$ corresponds to an automorpshim of $V_{M\oplus M}$ which is lifted from the transposition $(\gamma,\delta)\mapsto (\delta,\gamma)$ of $M\oplus M$.  
In particular, $\sigma$ fixes all vectors in the vertex operator subalgebra $V_{\Z\beta_1}$ of $V_{M\oplus M}$ and acts on $V_{\Z\beta_2}$ as the involution $\theta$. 

Let $M_0=\Z \beta_1+\Z\beta_2$. 
Then $M\oplus M=M_0\sqcup(\frac{1}{2}(\beta_1+\beta_2)+M_0)$ and  
\begin{align*}
V_{M_0}\cong V_{\Z\beta_1}\otimes V_{\Z\beta_2}
\end{align*}
as vertex operator algebras.
We also see that 
\[
V_{\frac{1}{2}(\beta_1+\beta_2)+M_0}\cong V_{\frac{1}{2}\beta_1+\Z\beta_1}\otimes V_{\frac{1}{2}\beta_2+\Z\beta_2}
\]
as $V_{M_0}$-modules. 
In fact, we have 
\[
V_M\otimes V_M\cong V_{\Z\beta_1}\otimes V_{\Z\beta_2}\oplus  V_{\frac{1}{2}\beta_1+\Z\beta_1}\otimes V_{\frac{1}{2}\beta_2+\Z\beta_2}
\]
as vertex operator algebras, and hence we have the following isomorphism of vertex operator algebras:  
\begin{align}\label{ihdfs}
T^{2}(V_M)^{S_2}\cong V_{\Z\beta_1}\otimes V_{\Z\beta_2}^+\oplus  V_{\frac{1}{2}\beta_1+\Z\beta_1}\otimes V_{\frac{1}{2}\beta_2+\Z\beta_2}^+.
\end{align}
We notice that the right hand side in \eqref{ihdfs} is a vertex operator subalgebra of the vertex operator algebra $V_{\Z\frac{1}{2}\beta_1}\otimes V_{\Z\frac{1}{2}\beta_2}^+$ with same Virasoro vector.
Since $ T^{2}(V_M)^{S_2}$ is $C_2$-cofinite by Theorem \ref{main} and the $C_2$-cofiniteness of $V_M$, Proposition \ref{oasuhqe}(2) implies that $V_{\Z\frac{1}{2}\beta_1}\otimes V_{\Z\frac{1}{2}\beta_2}^+$ is $C_2$-cofinite.
Thus Proposition \ref{oasuhqe}(1) shows that $V_{\Z\frac{1}{2}\beta_2}^+\cong V_{L}^+$ is $C_2$-cofinite. 

\section{Appendix}\label{appendix}
The explicit forms of $f_0(z)$ and $f_{1}(z)$ in the proof of Lemma \ref{lemma0003} in Section \ref{Sect4.3} are given as follows. 
\begin{align*}
f_{0}(z)=&\frac{1}{952560} (-1 + z)z^2(2 + z)(5 + z)\\
&\times(56899584 - 4403385c_V - 14227920k - 11282544k^2 \\
& +(145224576 - 7909965c_V - 64633644k - 13107780k^2) z \\
&+(126839664  - 881615c_V  - 105966078k + 11271078k^2)z^2 \\
&+(62454924  + 6219675c_V  - 80059900k  +  18944100k^2)z^3 \\
&+ (30624426  + 5031670c_V  - 27016034k  + 3375918k^2)z^4 \\
&+ (16640946 + 1681120c_V  - 855556k  - 4950540k^2)z^5 \\
&+(6568506  + 255010c_V  + 2371530k  - 3240846k^2)z^6 \\
&+ (1578654  + 9310c_V  +  837744k  - 876960k^2)z^7 \\
&+ (219996 -   1680c_V + 139434k - 123354k^2)z^8 \\
&+ (16380  - 140c_V + 12940k  - 8820k^2)z^9 \\
&+ (504  + 668k- 252k^2) z^{10} + 16kz^{11})),
\intertext{and}
f_{1}(z)=&\frac{1}{952560}((-1 +z)^2 z (1 + z) (5 + 
          z) (7 + z) \\
          &\times (-5511240 - 67155480 k + 49737240 k^2 \\
          &+(- 1306368  - 1443330  c_V  - 155914956 k  + 
          114823548 k^2 )z \\
          &+ (6621426  - 2854845  c_V  - 149973732 k  + 111484800 k^2 )z^2\\
          &+( 3700620  -   2279410  c_V  - 78378012 k   + 60419016 k^2 )z^3 \\
          &- (1595916  - 916090  c_V   - 24431554 k  + 20694870 k^2 )z^4 \\
          &+(- 2092860   - 182980  c_V   - 4652226 k  + 4871790 k^2) z^5 \\
          &+(- 800730  - 12670  c_V   - 512034 k   + 830970 k^2) z^6 \\
          &+(-149688   + 980  c_V   - 21018 k   +  100674 k^2) z^7 \\
          &+(- 13860  + 140  c_V  + 2304 k  + 7560 k^2 )z^8 \\
          &+(- 504   + 372 k  + 252 k^2) z^9 + 16 k z^{10})).
\end{align*} 

\newpage

\end{document}